\newcommand{\R}{\mathbb{R}}
\DeclarePairedDelimiter{\floor}{\lfloor}{\rfloor}
\DeclareMathOperator{\tr}{\text{tr}}
\newcommand\norm[1]{\left\lVert#1\right\rVert}
\newcommand{\Grad}{\nabla}
\newcommand{\Div}{\nabla\cdot}
\newcommand{\dom}{\Omega}
\newcommand{\uint}{\Tilde{\pmb{u}}^{n+1}}
\newcommand{\uold}{\pmb{u}^n}
\newcommand{\unew}{\pmb{u}^{n+1}}
\newcommand{\uN}{\pmb{u}^{N+1}}
\newcommand{\Dtuone}{\frac{\uint-\uold}{\Delta t}}
\newcommand{\Dtutwo}{\frac{\unew-\uint}{\Delta t}}
\newcommand{\Qold}{\pmb{Q}^{n}}
\newcommand{\Qnew}{\pmb{Q}^{n+1}}
\newcommand{\QN}{\pmb{Q}^{N+1}}
\newcommand{\Pold}{\pmb{P}^{n}}
\newcommand{\Snew}{\pmb{s}^{n+1}}
\newcommand{\sigmanew} {\pmb{\Sigma}^{n+1}}
\newcommand{\Hnew}{\pmb{H}^{n+1}}
\newcommand{\Hold}{\pmb{H}^n}
\newcommand{\usol}{\pmb{u}_{\Delta t}}
\newcommand{\usolsubseq}{\pmb{u}_{\Delta t_m}}
\newcommand{\Wsol}{\pmb{W}_{\Delta t}}
\newcommand{\Dsol}{\pmb{D}_{\Delta t}}
\newcommand{\Dsolsubseq}{\pmb{D}_{\Delta t_m}}
\newcommand{\uappr}{\pmb{u}^*_{\Delta t}}
\newcommand{\uapprsubseq}{\pmb{u}^*_{\Delta t_m}}
\newcommand{\Qsol}{\pmb{Q}_{\Delta t}}
\newcommand{\Qini}{\pmb{Q}_{\text{in}}}
\newcommand{\Qsolsubseq}{\pmb{Q}_{\Delta t_m}}
\newcommand{\Qbacksol}{\pmb{Q}^*_{\Delta t}}
\newcommand{\Qbacksolsubseq}{\pmb{Q}^*_{\Delta t_m}}
\newcommand{\Psol}{\pmb{P}_{\Delta t}}
\newcommand{\Psolsubseq}{\pmb{P}_{\Delta t_m}}
\newcommand{\rsol}{{r}_{\Delta t}}
\newcommand{\rsolsubseq}{{r}_{\Delta t_m}}
\newcommand{\Hsol}{\pmb{H}_{\Delta t}}
\newcommand{\Hsolsubseq}{\pmb{H}_{\Delta t_m}}
\newcommand{\Ssol}{\pmb{s}_{\Delta t}}
\newcommand{\Ssolsubseq}{\pmb{s}_{\Delta t_m}}
\newcommand{\Sigmasol}{\pmb{\Sigma}_{\Delta t}}
\newcommand{\Sigmasolsubseq}{\pmb{\Sigma}_{\Delta t_m}}
\newcommand{\testvar}{\pmb{\varphi}}
\newcommand{\testphi}{\pmb{\phi}}
\newcommand{\testpsi}{\pmb{\psi}}
\newcommand{\Qint}{\Tilde{\pmb{Q}}^{n+1}}
\newcommand{\rint}{\Tilde{r}^{n+1}}
\newcommand{\Wint}{\Tilde{\pmb{W}}^{n+1}}
\newcommand{\Dint}{\Tilde{\pmb{D}}^{n+1}}
\newcommand{\Qnewone}{Q^{n+1}_1}
\newcommand{\Qnewtwo}{Q^{n+1}_2}
\newcommand{\Qoldone}{Q^{n}_1}
\newcommand{\Qoldtwo}{Q^{n}_2}
\newcommand{\Hnewone}{H^{n+1}_1}
\newcommand{\Hnewtwo}{H^{n+1}_2}
\newcommand{\uintone}{\Tilde{u}^{n+1}_1}
\newcommand{\uinttwo}{\Tilde{u}^{n+1}_2}
\newcommand{\Wintmatrix}{\begin{pmatrix}
    0 & \frac{1}{2}(\partial_2\uintone-\partial_1\uinttwo)\\\frac{1}{2}(\partial_1\uinttwo-\partial_2\uintone) &0
\end{pmatrix}}
\newcommand{\Dintmatrix}{\begin{pmatrix}
    \partial_1\uintone & \frac{1}{2}(\partial_2\uintone+\partial_1\uinttwo)\\ \frac{1}{2}(\partial_2\uintone+\partial_1\uinttwo) & \partial_2\uinttwo
\end{pmatrix}}
\newcommand{\Qoldmatrix}{\begin{pmatrix}
    \Qoldone & \Qoldtwo\\
    \Qoldtwo & -\Qoldone
\end{pmatrix}}
\newcommand{\Snewone}{s^{n+1}_1}
\newcommand{\Snewtwo}{s^{n+1}_2}
\newcommand{\Hnewmatrix}{\begin{pmatrix} \Hnewone &\Hnewtwo\\
    \Hnewtwo &-\Hnewone
\end{pmatrix}}
\newcommand{\Sigmanewone}{\Sigma^{n+1}_1}
\newcommand{\Sigmanewtwo}{\Sigma^{n+1}_2}
\newcommand{\uintmatrix}{\begin{pmatrix}
    \uintone\\ \uinttwo
\end{pmatrix}}
\newcommand{\uoldmatrix}{\begin{pmatrix}
    u^n_1\\ u^n_2
\end{pmatrix}}
\newcommand{\HnewonetoQnewone}{\left(L\Delta-2(P_1^n)^2\right)\,\Qnewone}
\newcommand{\HnewtwotoQnewtwo}{\left(L\Delta-2(P_2^n)^2\right)\,\Qnewtwo}
\newcommand{\HnewonetoQnewtwo}{(-2P^n_1P^n_2)\,\Qnewtwo}
\newcommand{\HnewtwotoQnewone}{(-2P^n_1P^n_2)\,\Qnewone}
\newtheorem{lemma}{Lemma}[section] 
\newtheorem{corollary}[lemma]{Corollary}
\newtheorem{theorem}[lemma]{Theorem}
\newtheorem{remark}[lemma]{Remark}
\newtheorem*{maintheorem*}{Main Theorem}
\theoremstyle{definition}{\newtheorem{definition}[lemma]{Definition}}
\newcommand{\todofw}[1]{\todo[size=\tiny,author=FW,backgroundcolor=blue!20!white]{#1}}
\newcommand{\todosy}[1]{\todo[size=\tiny,author=Seven,backgroundcolor=red!20!white]{#1}}
\numberwithin{equation}{section}
\title[Analysis for Q-tensor flow]{On the Convergence of an IEQ-based first-order Numerical Scheme for the Beris-Edwards System}
\date{\today}
\thanks{The authors of this work gratefully acknowledge support by NSF grants DMS 1912854 and OIA-DMR 2021019.}
\author[F. Weber]{Franziska Weber}
\address[Franziska Weber]{\newline Department of Mathematics \newline University of California, Berkeley \newline Berkeley, CA 94720, USA.}
\email[]{fweber@math.berkeley.edu}
\author[Y. Yue]{Yukun Yue}
\address[Yukun Yue]{\newline Department of Mathematical Sciences \newline Carnegie Mellon University \newline 5000 Forbes Avenue, Pittsburgh, PA 15213, USA.}
\email[]{yukuny@andrew.cmu.edu}
\begin{document}
 \pagenumbering{arabic}
\maketitle
\begin{abstract}
We present a convergence analysis of an unconditionally energy-stable first-order semi-discrete numerical scheme designed for a hydrodynamic Q-tensor model, the so-called Beris-Edwards system, based on the Invariant Energy Quadratization Method (IEQ). The model consists of the Navier-Stokes equations for the fluid flow, coupled to the Q-tensor gradient flow describing the liquid crystal molecule alignment. By using the Invariant Energy Quadratization Method, we obtain a linearly implicit scheme, accelerating the computational speed. However, this introduces an auxiliary variable to replace the bulk potential energy and it is a priori unclear whether the reformulated system is equivalent to the Beris-Edward system. In this work, we prove stability properties of the scheme and show its convergence to a weak solution of the coupled liquid crystal system. We also demonstrate the equivalence of the reformulated and original systems in the weak sense. 
\end{abstract}

\section{Introduction}
Liquid crystal is an intermediate state of matter between the solid and liquid phase and usually exists in a specific temperature range. On one hand, it possesses the ability to flow of liquids, and on the other hand, the molecules are ordered, (neighboring molecules roughly point in the same direction) similar as in a classical solid. Due to this, liquid crystals have unique physical properties that are used in various real-life applications, such as monitors, screens, clocks, navigation systems, and others. Typically, liquid crystals consist of elongated molecules of identical size which can be pictured as rods. The inter-molecular forces make them align along a common axis~\cite{Sonnet2012,ANDRIENKO2018520}.

Mathematical models for the dynamics of liquid crystals have been intensively studied in the last decades. For an overview, see~\cite{Lin_theory2001, RevModPhys.46.617,Ericksen1962HydrostaticTO, Leslie1966, Leslie1968SomeCE} and the references therein. Here we will consider the Q-tensor model by Landau and de Gennes~\cite{de1993physics} and its numerical approximation. In this model, the orientation of the liquid crystal molecules is described by the Q-tensor, a symmetric and trace-free $d\times d$-matrix field where $d=2,3$ is the spatial dimension. It can be interpreted as the deviation of the second moment of the probability density of the directions of liquid crystal molecules from the isotropic state~\cite{majumdar_2010}. When the liquid crystal is in an equilibrium, the Q-tensor minimizes a free energy, the so-called Landau-de Gennes free energy~\cite{Majumdar_LandauDeGennes, Ball2017},
\begin{equation*}
\label{eq:Landau_DeGennes_energy}
E_{LG}(\pmb{Q})=\int_{\dom}\mathcal{F}_B(\pmb{Q})+\mathcal{F}_E(\pmb{Q}),
\end{equation*}
where $\dom\subset\mathbb{R}^d$, is the spatial domain, and we assume that it has a sufficiently smooth boundary. $\mathcal{F}_B$ is the bulk potential and $\mathcal{F}_E$ is the elastic energy density given by
\begin{equation*}
\label{eq:bulk_elastic}
\mathcal{F}_B(\pmb{Q})=\frac{a}{2}\tr(\pmb{Q}^2)-\frac{b}{3}\tr(\pmb{Q}^3)+\frac{c}{4}\left(\tr(\pmb{Q}^2)\right)^2, \quad\mathcal{F}_E(\pmb{Q})=\frac{L}{2}\lvert \nabla\pmb{Q}\rvert^2,
\end{equation*}
where $a, b, c, L$ are constants with $c, L>0$. In particular, $c>0$ will guarantee the existence of a lower bound of the bulk potential, which is vital for the following analysis. In a non-equilibrium situation, the dynamics of the Q-tensor are governed by a nonlinear system of PDEs, consisting of the gradient flow for the Q-tensor field coupled to the Navier-Stokes equations for the underlying fluid flow~\cite{beris1994thermodynamics, ZhaoWang_convexsplitting, ZHAO2017803},
\begin{subequations}\label{eq:pde_system}
  \begin{empheq}[left = \empheqlbrace]{align}    &\pmb{u}_t+(\pmb{u}\cdot\nabla)\pmb{u}=-\nabla p+\mu\Delta \pmb{u}+\nabla\cdot\pmb{\Sigma}-\pmb{H}\nabla\pmb{Q}\label{eq:original_system_ut}, \\  
& \Div \pmb{u} = 0\label{eq:original_system_divergence_free},\\
&\pmb{Q}_t+\pmb{u}\cdot\nabla\pmb{Q}-\pmb{S}=M\pmb{H},\label{eq:original_system_Qt}
\end{empheq}  
\end{subequations}
subject to initial and boundary conditions,
\begin{subequations}
\label{eq:bd_condition}
    \begin{empheq}
[left = \empheqlbrace]{align}
\left.\pmb{Q}\right|_{t=0}=\pmb{Q}_0,\qquad &\left.\pmb{Q}\right|_{\partial\Omega\times[0,T]}=0,\label{eq:Q_ib_condition}\\
\left.\pmb{u}\right|_{t=0}=\pmb{u}_0,\qquad &\left.\pmb{u}\right|_{\partial_\Omega\times[0,T]}=0\label{eq:u_ib_condition},
\end{empheq}
\end{subequations}
where $(\pmb{H}\nabla\pmb{Q})_k=\sum_{i,j=1}^d{H}_{ij}\partial_k{Q}_{ij}$ and $(\pmb{u}\cdot\nabla\pmb{Q})_{ij}=\sum_{k=1}^d u_k\partial_k Q_{ij}$ for all $1\leq k, i, j\leq d$. $\pmb{u}$ denotes the velocity field, and $p$ represents the pressure. The tensors $\pmb{S}$ and $\pmb{\Sigma}$ appearing in the system~\eqref{eq:original_system_ut}--\eqref{eq:original_system_Qt} above are given by
\begin{equation}
\label{eq:S}
\pmb{S}=S(\pmb{u}, \pmb{Q})=\pmb{W}\pmb{Q}-\pmb{Q}\pmb{W}+\xi\left(\pmb{Q}
\pmb{D}+\pmb{D}\pmb{Q}\right)+\frac{2\xi}{d}\pmb{D}-2\xi(\pmb{D}:\pmb{Q})
\left(\pmb{Q}+\frac{1}{d}\pmb{I}\right),
\end{equation}
and
\begin{equation}
\label{eq:sigmax}
\pmb{\Sigma}=\Sigma(\pmb{Q}, \pmb{H})=\pmb{Q}\pmb{H}-\pmb{H}\pmb{Q}-\xi\left(\pmb{H}\pmb{Q}+\pmb{Q}\pmb{H} \right)-\frac{2\xi}{d}\,\pmb{H}+2\xi(\pmb{Q}:\pmb{H})\,\left(\pmb{Q}+\frac{1}{d}\pmb{I}\right).
\end{equation}
with
\begin{equation}
\label{eq:D_W}
\pmb{D}=\frac{1}{2}\left(\nabla \pmb{u}+(\nabla\pmb{u})^\top\right),\quad\,\, \pmb{W}=\frac{1}{2}\left(\nabla\pmb{u}-(\nabla\pmb{u})^\top \right)
\end{equation}
representing the symmetric and skew-symmetric parts of the matrix $\nabla\pmb{u}$. Here $\pmb{S}$ denotes the rotational and stretching effects on the liquid crystal molecules generated by the flow, while the constant $\xi$ measures the degree of these effects. $\pmb{\Sigma}$ is an elastic stress tensor term~\cite{WuCavaterra2016}. The tensor $\pmb{H}$ is the molecular field corresponding to the variational derivative of the free energy ${E}_{LG}(\pmb{Q})$ and given by
\begin{equation}
\label{eq:original_system_H}
\pmb{H}=-\frac{\partial{E}_{LG}}{\partial \pmb{Q}}=L\Delta \pmb{Q}-\left[ a\pmb{Q}-b\left(\pmb{Q}^2-\frac{1}{d}\tr(\pmb{Q}^2)\pmb{I}\right) -c\tr(\pmb{Q}^2)\,\pmb{Q} \right].
\end{equation}
Notice that the last term in the definition of $\pmb{\Sigma}$,~\eqref{eq:sigmax} results in a gradient term after taking the divergence as it is the case in~\eqref{eq:original_system_ut}. Hence we can modify the pressure to include this term and instead use the modified definition of $\pmb{\Sigma}$:
\begin{equation}
\label{eq:sigma}
\pmb{\Sigma}=\Sigma(\pmb{Q}, \pmb{H})=\pmb{Q}\pmb{H}-\pmb{H}\pmb{Q}-\xi\left(\pmb{H}\pmb{Q}+\pmb{Q}\pmb{H} \right)-\frac{2\xi}{d}\,\pmb{H}+2\xi(\pmb{Q}:\pmb{H})\pmb{Q}.
\end{equation}
Indeed, as we will be concerned with Leray-Hopf solutions in the following, these definitions can be used interchangeably. In the following, we will always use definition~\eqref{eq:sigma} for $\pmb{\Sigma}$ and the accordingly modified definition of the pressure.
System~\eqref{eq:original_system_ut}--\eqref{eq:sigma} is equivalent to the Beris-Edwards model as it is shown in \cite[Section 2.1]{AbelsLiu2014}.

Our goal in this work is to provide a convergence proof for a semi-discrete numerical scheme for ~\eqref{eq:original_system_ut}--\eqref{eq:sigma}. The existence, uniqueness and regularity theory for this system have been studied in, e.g.,~\cite{AbelsLiu2014, AbelsLiu2016, WuCavaterra2016, Gonzalez2014, GUILLENGONZALEZ201584,Paicu_global_existence, paicu_energy_dissipation}. Numerical simulation and analysis of this and related models have been undertaken in, e.g.,~\cite{MeshAdaptive,Bartels_Simulation, NochettoFEM, TensorBased, Davis_FEM, MovingMesh}. Due to the system being highly nonlinear, for stability of the numerical method, it is crucial to retain a discrete version of the energy dissipation law satisfied by the system at the level of the numerical scheme. However, this often results in nonlinearly implicit schemes which require the iterative solution of a nonlinear algebraic system at every timestep. In order to circumvent this issue, the invariant energy quadratization (IEQ) method has been introduced for nonlinear gradient flows~\cite{GuillenGonzalez2013,ZHAO2017803,JiangIEQ, YANG2017691, IEQ_Cahn-Hilliard, YANG201880, YANG2017104}. The key idea is to introduce an auxiliary variable for the bulk potential term which is then discretized as an independent variable. This results in a linearly implicit scheme which is unconditionally energy-stable. A discrete version of the energy dissipation property is retained while enhancing computational efficiency.

Specifically, in the case of system~\eqref{eq:original_system_ut}-\eqref{eq:sigma}, the auxiliary variable $r$ is introduced~\cite{ZHAO2017803}:
\begin{equation}
\label{eq:auxiliary_variable_r}
r(\pmb{Q})=\sqrt{2\left(\frac{a}{2}\tr(\pmb{Q}^2) -\frac{b}{3}\tr(\pmb{Q}^3)+\frac{c}{4}\tr^2(\pmb{Q}^2)+A_0 \right)},
\end{equation}
where $A_0>0$ is a constant ensuring that $r$ is always positive for any $\pmb{Q}\in\mathbb{R}^{d\times d}$. This is possible since one can show that the bulk potential $\mathcal{F}_B(\pmb{Q})$ has a lower bound, see~\cite[Theorem 2.1]{ZHAO2017803}. 
If we then define
\begin{equation*}
\label{eq:VQ}
V(\pmb{Q})=a\pmb{Q}-b\left[\pmb{Q}^2-\frac{1}{d}\tr(\pmb{Q}^2)\pmb{I}\right]+c\tr(\pmb{Q}^2)\pmb{Q},
\end{equation*}
it follows that
\begin{equation}
\label{eq:P(Q)_definition}
\frac{\delta r(\pmb{Q})}{\delta\pmb{Q}}=\frac{V(\pmb{Q})}{r(\pmb{Q})}\coloneqq P(\pmb{Q}),
\end{equation}
for a trace-free, symmetric tensor $\pmb{Q}$. Then system \eqref{eq:original_system_ut}-\eqref{eq:original_system_Qt} can be reformulated as
\begin{subequations}
\label{eq:reformulated_pde_system}
    \begin{empheq}[left = \empheqlbrace]{align}  
&\pmb{u}_t+(\pmb{u}\cdot\nabla)\pmb{u}=-\nabla p+\mu\Delta\pmb{u}+\nabla\cdot\pmb{\Sigma}-\pmb{H}\nabla\pmb{Q},\label{eq:ut}\\
&\nabla\cdot\pmb{u}=0,\label{eq:divergence_free_reformulated}\\
&\pmb{Q}_t+\pmb{u}\cdot\pmb{Q}-\pmb{S}=M\pmb{H},\label{eq:Qt}\\
&r_t=P(\pmb{Q}):\pmb{Q}_t,\label{eq:rt}\\
&\pmb{H}=L\Delta \pmb{Q}-rP(\pmb{Q})\label{eq:H}.
\end{empheq}
\end{subequations}



In~\cite{ZHAO2017803}, the authors proposed an energy stable scheme for the reformulated system \eqref{eq:ut}-\eqref{eq:H}, and proved that it satisfies a discrete version of the energy dissipation law. However, to the best of our knowledge, there is no proof of convergence of the numerical scheme designed for the Beris-Edwards model based on the IEQ method. The main issue is that the reformulation of~\eqref{eq:original_system_ut}--\eqref{eq:sigma} to~\eqref{eq:ut}--\eqref{eq:H} is only valid at the formal level assuming solutions are smooth. However, this may not be the case for this system, given that it involves coupling to the incompressible Navier-Stokes equations. Therefore, at least in three space dimensions, at most global weak solutions can be expected. Furthermore, a priori, the auxiliary variable $r$ has less integrability than the square root of the bulk potential. While the square root of the bulk potential is expected to be in the Lebesgue space $L^3$ in space, the auxiliary variable is only expected to be in $L^2$ according to the reformulated energy dissipation law. In this work, we will show how to circumvent this issue and obtain a priori estimates for the numerical approximations which are sufficient for passing to the limit and obtaining a weak solution of~\eqref{eq:original_system_ut}--\eqref{eq:sigma}. Hence, this can also be seen as an alternative proof of existence of global weak solutions for the Beris-Edwards system. The rest of this article is structured as follows: In Section~\ref{sec:pre}, we introduce the notations and some standard results that will be used in the following.
Then we will construct and analyze a numerical scheme designed for system \eqref{eq:ut}--\eqref{eq:H} in Section~\ref{sec:numerical_scheme}. We will also provide a discrete energy dissipation law in this section. In Section~\ref{sec:convergence}, we provide the convergence argument. Finally, we will show the equivalence between weak solutions for the reformulated system and weak solutions of the original system~\eqref{eq:original_system_ut}--\eqref{eq:original_system_H}.




\section{Preliminaries}\label{sec:pre}
\subsection{Notation}
Let $\dom\subset \R^d$ be a bounded domain with $C^2$ boundary. 
We denote the norm of a Banach space $X$ as $\|\cdot\|_X$ and its dual space by $X^*$. If we omit the subscript $X$, it represents the norm of the space $L^2(\dom)$. For simplicity, when used as a subscript, we will not write the symbol $\dom$ if we refer to a function space over domain $\Omega$, i.e., $L^2=L^2(\dom)$. The inner product on $L^2$ will be denoted by $\langle\cdot, \cdot\rangle$. Vector-valued and matrix-valued functions will be denoted in bold form. 

For two vectors $\pmb{u}, \pmb{v}\in\mathbb{R}^d$, we set their inner product to be $\pmb{u}\cdot\pmb{v}=\sum_{i=1}^du_iv_i$ and for two matrices $\pmb{A}, \pmb{B}\in\mathbb{R}^{d\times d}$, we use the Frobenius inner product $\pmb{A}:\pmb{B}=\tr(\pmb{A}^\intercal \pmb{B})=\sum_{i, j =1}^d A_{ij}B_{ij}.$ The norm of matrix \pmb{A} is then given by $\lvert \pmb{A}\rvert=\lvert \pmb{A}\rvert_F=\sqrt{\pmb{A}:\pmb{A}}$. Finally, the derivatives of matrix $\pmb{A}$ are defined as a matrix, that is,  $\partial_i \pmb{A}=(\partial_i A_{jk})_{jk}$ and $\nabla\pmb{A}=(\partial_1\pmb{A}, \cdots, \partial_d\pmb{A})$. When we write $\|\pmb{A}\|$, $\|\nabla\pmb{A}\|$, we mean $\|\pmb{A}\|=\left(\int_\dom \lvert A\rvert^2\,dx\right)^{\frac{1}{2}}$ and $\|\nabla\pmb{A}\|=\left( \int_\dom\sum_{i=1}^d\lvert \partial_i \pmb{A}\rvert^2\,dx \right)^{\frac{1}{2}}$.

Throughout this paper, we will denote Sobolev spaces and Bochner spaces in standard ways, and will not tell the difference between scalar and vector value function spaces if it is clear enough from the context. In particular, we use $L^p(0,T; X)$ to denote the space of functions $f:[0,T)\to X$ which are $L^p$-integrable in the time variable $t\in [0,T)$. The dual space of $H^1_0(\Omega)$ is denoted by $H^{-1}(\Omega)$. We define $\mathcal{S}_0^d$ to be the space of traceless symmetric $\mathbb{R}^{d\times d}$ matrices,
\begin{equation*}
    \label{eq:S_0^d}
    \mathcal{S}_0^d\coloneqq \{\pmb{A}\in\mathbb{R}^{d\times d}: A_{ij}=A_{ji}, \sum_{i=1}^d A_{ii}=0, 1\leq i,j\leq d \}.
\end{equation*}
If there is no additional explanation, when we refer to a matrix-valued function $\pmb{Q}$ (including $\Qnew, \Qold, \Qsol, \Qsolsubseq$, etc.), we mean $\pmb{Q}: \dom\to \mathcal{S}_0^d$.
We will use the subscript $\sigma$  to indicate the divergence-free vector spaces, for example, 
\begin{equation*}
    \label{eq:sigma_space}
    \begin{aligned}
        C_{c,\sigma}^\infty(\Omega)=\{\pmb{\phi}\in C_c^\infty(\dom); \Div \pmb{\phi}=0\},&\quad L^2_{\sigma}(\Omega)=\{\pmb{\phi}\in L^2(\dom):\Div \pmb{\phi}=0, \pmb{\phi}\cdot\pmb{n}|_{\partial\dom}=0\}=\overline{C_c^\infty(\dom)}^{L^2(\dom)},\\
        &\quad H^1_{0,\sigma}(\dom)=H^1_0(\dom)\cap L^2_\sigma(\dom).
    \end{aligned}
\end{equation*}

We denote the Leray projector by $\mathcal{P}: L^2(\Omega)\to L^2_\sigma(\Omega)$, which is an orthogonal projection
induced by the Helmholtz-Hodge decomposition~\cite{Temam_NavierStokes} $\pmb{f}=\nabla g+\pmb{h}$ for any $\pmb{f}\in L^2(\Omega)$. Here, $g\in H^1(\Omega)$ is a scalar field, and $\pmb{h}\in L^2_\sigma(\Omega)$ is a divergence-free vector field. Then for all $\pmb{f}\in L^2(\Omega)$, it holds that $\mathcal{P}\pmb{f}=\pmb{h}$.

We will use $C$ to denote a generic constant, which might depend on parameters $\mu, a, b, c, M, L, \xi, d$, domain $\Omega$, and initial values $(\pmb{u}_{in}, \pmb{Q}_{in})$. If a constant depends on any other factors, it will be specified. The product space of two Banach spaces $X$ and $Y$ will be denoted as $X\times Y$ for all $(x,y)\in X\times Y$ where $x\in X, y\in Y$.

\subsection{Technical lemmas and definition of weak solutions}

Here we will list the technical tools that will be frequently used in the following analysis. To obtain higher order regularity of $\pmb{Q}$ in space, we recall Agmon's inequality~\cite[Lemma 4.10]{ConstantinFoias+2022}.
\begin{lemma}
    \label{lem:agmon}
    For any $f\in H^2(\Omega)\cap H^1_0(\Omega)$,
    \begin{equation}
        \label{eq:agmon}
        \|f\|_{L^\infty}\leq C\|f\|_{H^1}^{\frac{1}{2}}\|f\|_{H^2}^{\frac{1}{2}}.
    \end{equation}
\end{lemma}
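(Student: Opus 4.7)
Since the statement is classical, the plan is to reduce to an inequality on $\mathbb{R}^d$ and exploit Fourier analysis. First I would use the hypothesis that $\partial\Omega$ is $C^2$ to invoke a bounded linear extension operator $E: H^2(\Omega)\to H^2(\mathbb{R}^d)$ satisfying $\|Ef\|_{H^k(\mathbb{R}^d)}\leq C_\Omega\|f\|_{H^k(\Omega)}$ for $k=0,1,2$. (The assumption $f\in H^1_0$ is not actually needed for this argument; if one wishes to avoid the extension operator one can instead extend by zero, which preserves only the $H^1$ norm, and then cover the $H^2$ input via a local mollification plus cutoff argument near $\partial\Omega$.) Since $\|f\|_{L^\infty(\Omega)}\leq \|Ef\|_{L^\infty(\mathbb{R}^d)}$, it suffices to prove the inequality for $\tilde f := Ef$ on $\mathbb{R}^d$.

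On $\mathbb{R}^d$, Fourier inversion combined with Hausdorff--Young gives
\begin{equation*}
\|\tilde f\|_{L^\infty(\mathbb{R}^d)}\leq (2\pi)^{-d/2}\|\widehat{\tilde f}\|_{L^1(\mathbb{R}^d)}.
\end{equation*}
I would split the Fourier integral at a free parameter $R>0$ and apply Cauchy--Schwarz on each piece using the weights $(1+|\xi|^2)^{-1}$ and $(1+|\xi|^2)^{-2}$:
\begin{equation*}
\int_{|\xi|\leq R}|\widehat{\tilde f}(\xi)|\,d\xi \leq \Bigl(\int_{|\xi|\leq R}(1+|\xi|^2)^{-1}\,d\xi\Bigr)^{1/2}\|\tilde f\|_{H^1},
\end{equation*}
\begin{equation*}
\int_{|\xi|> R}|\widehat{\tilde f}(\xi)|\,d\xi \leq \Bigl(\int_{|\xi|> R}(1+|\xi|^2)^{-2}\,d\xi\Bigr)^{1/2}\|\tilde f\|_{H^2}.
\end{equation*}
For $d\in\{2,3\}$ the two weight integrals scale like $R^{d-2}$ (up to a logarithm when $d=2$) and $R^{d-4}$ respectively; in particular for $d=3$ they behave like $CR$ and $CR^{-1}$, yielding $\|\widehat{\tilde f}\|_{L^1}\leq C\bigl(R^{1/2}\|\tilde f\|_{H^1}+R^{-1/2}\|\tilde f\|_{H^2}\bigr)$.

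Optimizing the right-hand side in $R$ by choosing $R=\|\tilde f\|_{H^2}/\|\tilde f\|_{H^1}$ (assuming, without loss of generality, that neither norm is zero) balances the two contributions and produces the desired geometric-mean estimate
\begin{equation*}
\|\tilde f\|_{L^\infty(\mathbb{R}^d)}\leq C\|\tilde f\|_{H^1(\mathbb{R}^d)}^{1/2}\|\tilde f\|_{H^2(\mathbb{R}^d)}^{1/2}.
\end{equation*}
Composing with the bounds from the extension operator gives \eqref{eq:agmon}.

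The only genuinely delicate point is the $d=3$ case, where the natural Sobolev embedding $H^s\hookrightarrow L^\infty$ requires $s>3/2$ and therefore cannot recover Agmon's inequality directly by interpolating in the scale $H^s$; the Fourier-splitting trick above is what buys back the endpoint. In $d=2$ one can alternatively shortcut the argument by interpolating $\|f\|_{H^{3/2}}\leq\|f\|_{H^1}^{1/2}\|f\|_{H^2}^{1/2}$ and applying the (subcritical) embedding $H^{3/2}(\mathbb{R}^2)\hookrightarrow L^\infty(\mathbb{R}^2)$, which avoids the logarithmic term that the splitting argument produces in two dimensions. In either presentation, the availability of the $C^2$-regular extension operator is essential, and I would expect that to be the only nontrivial ingredient that has to be imported from outside the estimate itself.
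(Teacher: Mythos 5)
The paper treats this as a known result and supplies no proof of its own: Lemma~\ref{lem:agmon} is simply cited from Constantin--Foias (Lemma~4.10 there). Your Fourier-splitting argument is a correct and standard proof of the classical Agmon inequality, so there is nothing to reconcile against the paper. Two small observations: the Calder\'on--Stein extension operator you invoke is indeed available for $C^2$ (in fact Lipschitz) domains and is bounded simultaneously on $H^1$ and $H^2$, which is exactly what the optimization in $R$ needs; and, as you correctly point out, the $H^1_0$ hypothesis in the paper's statement is not used in the proof and is there only because it matches the function class the paper applies the lemma to. Your alternative remark about using the subcritical embedding $H^{3/2}(\mathbb{R}^2)\hookrightarrow L^\infty(\mathbb{R}^2)$ together with interpolation is a clean way to avoid the logarithmic loss in $d=2$; for $d=3$ the splitting is genuinely necessary, as you say, since $H^{3/2}(\mathbb{R}^3)$ is exactly critical.
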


The following lemma states an a priori estimate for Laplace operator \cite[Theorem 3.1.2.1]{Grisvard}.
\begin{lemma}
    \label{lem:laplace_estimate}
    There exists a constant $C$ which only depends on the diameter of $\Omega$, such that
\begin{equation}\label{eq:laplace_priori_estimate}
    \|{f}\|_{H^2}\leq C\|\Delta f\|,
\end{equation}
for all $f\in H^2(\Omega)\cap H^1_0(\Omega)$.
\end{lemma}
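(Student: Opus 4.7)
The plan is to recognize this as a standard elliptic regularity estimate for the Dirichlet Laplacian on a $C^2$ domain, and to organize the proof in two stages: first, controlling the lower-order norms $\|f\|$ and $\|\nabla f\|$ by $\|\Delta f\|$ by integration by parts and Poincaré, and second, controlling the second derivatives $\|D^2 f\|$ by $\|\Delta f\|$ via elliptic regularity that uses the $C^2$ assumption on $\partial\Omega$.

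For the lower-order bounds, since $f \in H^1_0(\Omega)$, integration by parts gives
\begin{equation*}
\|\nabla f\|^2 = -\int_\Omega f\,\Delta f\,dx \leq \|f\|\,\|\Delta f\|.
\end{equation*}
The Poincaré inequality, valid on a bounded domain with Dirichlet boundary values, yields $\|f\|\leq C_\Omega \|\nabla f\|$, and combining these two inequalities gives $\|\nabla f\|\leq C\|\Delta f\|$ and hence $\|f\|\leq C\|\Delta f\|$, where the constant depends only on the Poincaré constant of $\Omega$, and therefore only on the diameter.

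For the second-order bound, the key identity is
\begin{equation*}
\int_\Omega |D^2 f|^2\,dx = \int_\Omega (\Delta f)^2\,dx,
\end{equation*}
which holds for test functions in $C_c^\infty(\Omega)$ after two integrations by parts. To handle the boundary, I would use a finite partition of unity subordinate to a covering of $\bar\Omega$ by coordinate charts. On interior charts the identity above applies directly. On boundary charts, the $C^2$ regularity of $\partial\Omega$ allows one to flatten the boundary by a $C^2$ diffeomorphism, reducing the problem to the half-space with a homogeneous Dirichlet condition. There, tangential difference quotients preserve the boundary condition and give control of all tangential second derivatives, while the normal second derivative is recovered algebraically from the equation $\partial_d^2 f = \Delta f - \sum_{i<d}\partial_i^2 f$. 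The lower-order contributions produced by the change of variables are absorbed using the first stage.

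The main obstacle is the boundary analysis: the flattening introduces lower-order terms depending on the curvature, which must be carefully tracked, and the tangential/normal split of second derivatives requires the explicit construction of difference quotients compatible with the Dirichlet condition. The full argument is classical and carried out in detail in \cite[Theorem 3.1.2.1]{Grisvard}, to which we simply refer.
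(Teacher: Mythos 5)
The paper itself gives no proof of this lemma; it is stated and cited directly to \cite[Theorem 3.1.2.1]{Grisvard}. Your proposal sketches the standard $C^2$-domain elliptic regularity argument (partition of unity, flattening the boundary, tangential difference quotients, recovering $\partial_d^2 f$ from the equation), which is the route in Evans or Gilbarg--Trudinger. This is in fact a genuinely different route from the one in the cited reference, and the difference matters for the precise claim being made.

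Grisvard's Theorem 3.1.2.1 is a statement about \emph{convex} domains, and its proof does not flatten the boundary. Instead it rests on the pointwise identity, valid after two integrations by parts for smooth $v$ vanishing on $\partial\Omega$,
\begin{equation*}
\int_\Omega (\Delta v)^2\,dx = \sum_{i,j}\int_\Omega (\partial_i\partial_j v)^2\,dx + \int_{\partial\Omega} \mathcal{B}(\nabla v,\nabla v)\,d\sigma,
\end{equation*}
where the boundary integrand involves the second fundamental form of $\partial\Omega$; since $\nabla v$ is purely normal on $\{v=0\}$, this reduces to a mean-curvature term, whose sign is favorable precisely when $\Omega$ is convex. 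Dropping the boundary term gives $\|D^2 v\|\leq\|\Delta v\|$ with constant $1$, and only the Poincar\'e step introduces the diameter. This is how the constant in Grisvard's statement comes to depend only on the diameter. By contrast, the flattening-and-difference-quotients argument you describe produces a constant that depends on the $C^2$ bounds of the boundary charts (curvature and covering data), \emph{not} just on $\operatorname{diam}\Omega$. So your sketch, as organized, does not deliver the specific constant dependence stated in the lemma, and your closing sentence asserting that this argument ``is carried out in detail in \cite[Theorem 3.1.2.1]{Grisvard}'' mischaracterizes that reference. (To be fair, the paper's own hypotheses are slightly mismatched: the notation section assumes only $C^2$ boundary, not convexity, while the lemma transcribes the diameter-only dependence from Grisvard's convex-domain result. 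For the uses in the paper, a constant depending on the fixed domain $\Omega$ would suffice, and for that your argument is fine.)
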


We will also use the Aubin-Lions lemma~\cite{MathematicsToolsForNavierStokes,Simon_Aubin-Lions}:

\begin{lemma}
    \label{lem:Aubin-Lions}
    Let $X_0\subset X_1\subset X_2$ be three Banach spaces. Assume that the embedding of $X_1$ into $X_2$ is continuous and that the embedding of $X_0$ into $X_1$ is compact. Let $p,r\in[1,\infty]$. Now if a family of functions $\mathcal{F}$ satisfies that for any $f\in\mathcal{F}$, 
    \begin{equation*}
        \label{eq:Compactness_condition}
        f\in L^p([0,T); X_0),\quad\quad \frac{df}{dt}\in L^r([0,T); X_2)
    \end{equation*}
    Then if $p<\infty$, $\mathcal{F}$ is a compact family in $L^p([0,T); X_1)$. If $p=\infty$, then $\mathcal{F}$ is a compact family in $C([0,T); X_1)$.
\end{lemma}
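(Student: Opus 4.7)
The plan is to prove relative compactness via the Fr\'echet--Kolmogorov compactness theorem for Bochner spaces, which reduces the task to two ingredients: uniform boundedness of $\mathcal{F}$ in $L^p(0,T;X_1)$, and equicontinuity of time translations, namely $\lim_{h\to 0^+}\sup_{f\in\mathcal{F}}\|f(\cdot+h)-f(\cdot)\|_{L^p(0,T-h;X_1)}=0$. The boundedness is immediate from the continuous embedding $X_0\hookrightarrow X_1$ combined with the hypothesis $f\in L^p(0,T;X_0)$ (assuming, as usual, that $\mathcal{F}$ is bounded in the norms provided by the hypotheses), so the heart of the argument is establishing the equicontinuity.

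The bridge between the strong information on $f$ in $X_0$ and the weak information on $f'$ in $X_2$ is an Ehrling--Lions type interpolation inequality: for every $\eta>0$ there exists $C_\eta>0$ such that
\begin{equation*}
\|v\|_{X_1}\leq \eta\|v\|_{X_0}+C_\eta\|v\|_{X_2},\qquad\text{for all } v\in X_0.
\end{equation*}
I would establish this by contradiction: if it failed, there would exist a sequence $v_n\in X_0$ with $\|v_n\|_{X_1}=1$, $\|v_n\|_{X_0}$ bounded, and $\|v_n\|_{X_2}\to 0$; by the compact embedding $X_0\hookrightarrow X_1$, a subsequence would converge strongly in $X_1$ to some $v$, but the continuity $X_1\hookrightarrow X_2$ would force $v=0$, contradicting $\|v\|_{X_1}=1$.

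Applied pointwise in time to $v=f(t+h)-f(t)$ and then integrated in $t$, the interpolation inequality yields
\begin{equation*}
\|\tau_h f - f\|_{L^p(0,T-h;X_1)}\leq 2\eta\,\|f\|_{L^p(0,T;X_0)}+C_\eta\,\|\tau_h f-f\|_{L^p(0,T-h;X_2)}.
\end{equation*}
The first term is controlled uniformly in $\mathcal{F}$ by the $L^p(0,T;X_0)$ bound. For the second, the representation $f(t+h)-f(t)=\int_t^{t+h}f'(s)\,ds$ in $X_2$, together with H\"older's inequality, gives a bound of the form $C h^{1-1/r}\|f'\|_{L^r(0,T;X_2)}$ when $r>1$ (the $r=1$ case uses uniform integrability of $\|f'(\cdot)\|_{X_2}$ instead). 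Sending $h\to 0^+$ first and then $\eta\to 0^+$ produces the desired equicontinuity, and the Fr\'echet--Kolmogorov criterion closes the argument.

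The main obstacle is the case $p=\infty$, where $L^\infty$ relative compactness has to be promoted to $C([0,T);X_1)$ compactness. The strategy is first to extract a subsequence converging in $L^q(0,T;X_1)$ for some finite $q$, then to upgrade this to uniform convergence on compact subsets of $[0,T)$ by applying Arzel\`a--Ascoli to $\mathcal{F}$ viewed as a family of $X_2$-valued functions (using equicontinuity in $X_2$ from the $f'$ bound, together with pointwise precompactness in $X_2$), and finally to transfer this convergence back to $X_1$ via the interpolation inequality above. Some care is needed to choose continuous representatives of each $f\in\mathcal{F}$ as maps into $X_2$ so that pointwise evaluations are meaningful; this continuity is automatic from $f'\in L^r(0,T;X_2)$.
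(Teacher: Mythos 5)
The paper does not prove this lemma; it is cited as a standard result (Aubin--Lions--Simon) to the references \cite{MathematicsToolsForNavierStokes,Simon_Aubin-Lions}, so there is no in-paper proof to compare against. Your proposal follows the classical Simon route: Ehrling interpolation plus translation estimates plus a Kolmogorov--Riesz-type criterion. The Ehrling inequality and its proof by contradiction are exactly right, as is the time-translation estimate $\|\tau_h f-f\|_{L^p(0,T-h;X_2)}\lesssim h^{1-1/r}\|f'\|_{L^r(0,T;X_2)}$ for $r>1$.

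There is, however, a genuine gap in the reduction. You assert that the Fr\'echet--Kolmogorov theorem for Bochner spaces reduces relative compactness in $L^p(0,T;X_1)$ to (a) boundedness and (b) equicontinuity of time translations. This is false when $X_1$ is infinite-dimensional: take $X_1=\ell^2$ and $f_n(t)\equiv e_n$; the family is bounded, translation-invariant, yet not precompact. The vector-valued Kolmogorov--Riesz (or Simon's Theorem~1) requires a third, ``spatial'' condition --- for instance that the averaged sets $\bigl\{\int_a^b f(t)\,dt : f\in\mathcal{F}\bigr\}$ be relatively compact in $X_1$ (or in $X_2$, if one first proves compactness in $L^p(0,T;X_2)$ and then transfers via Ehrling). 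This is precisely the step where the compact embedding $X_0\hookrightarrow X_1$ enters most directly: the averages lie in a bounded set of $X_0$ by the $L^p(0,T;X_0)$ bound, and compactness of the embedding does the rest. In your proposal, compactness of the embedding is invoked only inside the contradiction argument for the Ehrling inequality, so the spatial compactness ingredient is never supplied and the argument does not close. A secondary imprecision: for $r=1$, boundedness of $\{f'\}$ in $L^1(0,T;X_2)$ does not give uniform integrability; the standard fix is to integrate the inequality $\|f(t+h)-f(t)\|_{X_2}\le\int_t^{t+h}\|f'(s)\|_{X_2}\,ds$ in $t$ and apply Fubini to obtain an $O(h)$ bound in $L^1(0,T-h;X_2)$, then combine with the $L^\infty$ estimate when raising to the $p$th power.
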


\begin{definition}
	\label{def:weakreformulation_original_system}
		By a weak solution of system ~\eqref{eq:ut} to \eqref{eq:H}, we mean a triple $(\pmb{u},\pmb{Q}, \pmb{H})$, with $\pmb{u}:[0,T)\times\Omega\to \mathbb{R}^{d}$, $\pmb{Q}:[0,T)\times\Omega\to\R^{d\times d}$ and $\pmb{H}:[0,T)\times\Omega\to\R^{d\times d}$ which satisfy
		\begin{enumerate}[(i)]
		    \item $\pmb{Q}(t,x)$ is trace-free and symmetric and $\pmb{u}(t,x)$ is divergence free for almost every $(t,x)$.
		    \item They attain the initial values
		    \begin{equation*}
    \pmb{Q}(0,x)=\pmb{Q}_0(x)\in H^1(\Omega),\quad \pmb{u}(0,x)=\pmb{u}_0(x)\in L^2(\Omega), \quad \langle\pmb{u}_0, \nabla{\psi}\rangle=0,
    \end{equation*}
     for any smooth function ${\psi}\in C^\infty_c(\dom)$.
    
    \item The triple $(\pmb{u}, \pmb{Q}, \pmb{H})$  satisfies the regularity condition
	\begin{equation*}
	\pmb{Q}\in L^\infty(0,T;H^1(\dom))\cap L^2(0,T;H^2(\dom)), \quad\pmb{u}\in L^\infty(0,T;L^2(\dom))\cap L^2(0,T;H^1(\dom)), \quad\pmb{H}\in L^2([0,T)\times\Omega).
	\end{equation*}
	
	\item $(\pmb{u},\pmb{Q}, \pmb{H})$ satisfy the weak formulations
	\begin{subequations}
	    
\label{eq:weak_formulation_solution}
		\begin{equation}
	    \label{eq:weakformu}
	    \begin{aligned}
	         &\quad\,\,\int_0^T\int_\dom \pmb{u}\,\partial_t\pmb{\psi}\,dxdt-\int_\Omega \pmb{u}(T,x)\cdot\pmb{\psi}(T,x) \,dx +\int_{\Omega}\pmb{u}_0(x)\cdot\pmb{\psi}(0,x)\, dx+\int_0^T\int_\dom\sum_{i,j=1}^d u_iu_j\partial_i\psi_j\,dxdt\\
	         &=\int_0^T\int_\dom\left[ \left(\pmb{Q}\pmb{H}-\pmb{H}\pmb{Q}\right)-\xi\left(\pmb{H}\pmb{Q}+\pmb{Q}\pmb{H} \right)-\frac{2\xi}{d}\,\pmb{H}+2\xi(\pmb{Q}:\pmb{H})\,\left(\pmb{Q}+\frac{1}{3}\pmb{I}\right)\right]:\nabla\pmb{\psi}\,dxdt\\
	         &\quad\,\,+\mu\int_0^T\int_\dom \nabla\pmb{u}:\nabla\pmb{\psi}\,dxdt+\int_0^T\int_\dom (\pmb{H}\nabla\pmb{Q})\cdot\pmb{\psi}\,dxdt,
	    \end{aligned}
	\end{equation}
	
	\begin{equation}
	\label{eq:weakformQr}
	\begin{split}
	&\int_0^T\int_\Omega \pmb{Q} :\partial_t\pmb{\varphi} \,dx dt-\int_\Omega \pmb{Q}(T,x):\pmb{\varphi}(T,x) \,dx +\int_{\Omega}\pmb{Q}_0(x):\pmb{\varphi}(0,x)\, dx + \int_0^T\int_\dom \pmb{Q}:(\pmb{u}\cdot\nabla \pmb{\varphi})\,dxdt\\
	&+\int_0^T\int_\dom \left[\pmb{W}\pmb{Q}-\pmb{Q}\pmb{W}+\xi\left(\pmb{Q}
  \pmb{D}+\pmb{D}\pmb{Q}\right)+\frac{2\xi}{d}\pmb{D}-2\xi(\pmb{D}:\pmb{Q})\pmb{Q}\right]:\pmb{\varphi}\,dxdt \\
	& = -\int_0^T\int_\Omega M\pmb{H}:\pmb{\varphi}\,dxdt,
	\end{split}
	\end{equation}
	\begin{equation}
	    \label{eq:weakformH}
	    \begin{aligned}
	    \int_0^T\int_\Omega \pmb{H}:\pmb{\varphi}\,dxdt=&-\int_0^T\int_\Omega \left(L \sum_{i,j=1}^d\Grad Q_{ij}\cdot\Grad\varphi_{ij}  \right)\,dxdt\\
	    &-\int_0^T\int_\Omega \left( a\pmb{Q}-b\left( (\pmb{Q}^2)-\frac{1}{3}\tr(\pmb{Q}^2)+c\tr(\pmb{Q}^2)\pmb{Q} \right) \right):\pmb{\varphi} \, dxdt,
	    \end{aligned}
	\end{equation}
		\end{subequations}
	 for all smooth divergence-free function $\pmb{\psi}:[0,T)\times\Omega\to\mathbb{R}^{d}$ and all trace-free symmetric matrix function $\pmb{\varphi} = (\varphi_{ij})_{i,j=1}^d:[0,T)\times \Omega\to \R^{d\times d}$ which are compactly supported within $\dom$ for every $t\in [0,T]$.
    
		\end{enumerate}


\end{definition}

\begin{definition}
	\label{def:weakreformulation}
		By a weak solution of system ~\eqref{eq:ut} to \eqref{eq:H}, we mean a quadruple $(\pmb{u},\pmb{Q},\pmb{H},r)$, with $\pmb{u}:[0,T)\times\Omega\to \mathbb{R}^{d}$, $\pmb{Q}:[0,T)\times\Omega\to\R^{d\times d}$, $\pmb{H}:[0,T)\times\Omega\to\R^{d\times d}$ and $r:[0,T)\times\dom\to\R$, 
		
			\begin{enumerate}[(i)]
			\item $\pmb{Q}(t,x)$ is trace-free and symmetric and $\pmb{u}(t,x)$ is divergence free for almost every $(t,x)$,
			\item They attain the initial values
		 \begin{equation*}
		\pmb{Q}(0,x)=\pmb{Q}_0(x)\in H^1(\Omega),\quad \pmb{u}(0,x)=\pmb{u}_0(x)\in L^2(\Omega),  \quad r(0,x)=r\left(Q_0(x)\right),\quad \langle\pmb{u}_0, \nabla{\psi}\rangle=0,
		\end{equation*}
		for any smooth function ${\psi}\in C^\infty_c(\dom)$.

			\item $(\pmb{u}, \pmb{Q}, r)$  satisfy the regularity condition
			\begin{align*}	\label{eq:regularity}
		\pmb{Q}\in L^\infty(0,T;H^1(\dom&))\cap L^2(0,T;H^2(\dom)), \quad\pmb{u}\in L^\infty(0,T;L^2(\dom))\cap L^2(0,T;H^1(\dom)), 
		\\
		&\pmb{H}\in L^2(0,T;L^2(\Omega)),\quad
		r\in L^\infty(0,T;L^2(\dom))
		\end{align*}
			
			\item $(\pmb{u},\pmb{Q},\pmb{H},r)$ satisfy the weak formulations
			\begin{subequations}
			  \label{eq:reformlated_weakform}  
		
			\begin{equation}
			\label{eq:re_weakformu}
			\begin{aligned}
			&\quad\,\,\int_0^T\int_\dom \pmb{u}\,\partial_t\pmb{\psi}\,dxdt-\int_\Omega \pmb{u}(T,x)\cdot\pmb{\psi}(T,x) \,dx +\int_{\Omega}\pmb{u}_0(x)\cdot\pmb{\psi}(0,x)\, dx+\int_0^T\int_\dom\sum_{i,j=1}^d u_iu_j\partial_i\psi_j\,dxdt\\
			&=\int_0^T\int_\dom\left[ \left(\pmb{Q}\pmb{H}-\pmb{H}\pmb{Q}\right)-\xi\left(\pmb{H}\pmb{Q}+\pmb{Q}\pmb{H} \right)-\frac{2\xi}{d}\,\pmb{H}+2\xi(\pmb{Q}:\pmb{H})\,\left(\pmb{Q}+\frac{1}{3}\pmb{I}\right)\right]:\nabla\pmb{\psi}\,dxdt\\
			&\quad\,\,+\mu\int_0^T\int_\dom \nabla\pmb{u}:\nabla\pmb{\psi}\,dxdt+\int_0^T\int_\dom (\pmb{H}\nabla\pmb{Q})\cdot\pmb{\psi}\,dxdt,
			\end{aligned}
			\end{equation}
			
			\begin{equation}
			\label{eq:re_weakformQr}
			\begin{split}
			&\int_0^T\int_\Omega \pmb{Q} :\partial_t\pmb{\varphi} \,dx dt-\int_\Omega \pmb{Q}(T,x):\pmb{\varphi}(T,x) \,dx +\int_{\Omega}\pmb{Q}_0(x):\pmb{\varphi}(0,x)\, dx + \int_0^T\int_\dom \pmb{Q}:(\pmb{u}\cdot\nabla \pmb{\varphi})\,dxdt\\
			&+\int_0^T\int_\dom \left[\pmb{W}\pmb{Q}-\pmb{Q}\pmb{W}+\xi\left(\pmb{Q}
			\pmb{D}+\pmb{D}\pmb{Q}\right)+\frac{2\xi}{d}\pmb{D}-2\xi(\pmb{D}:\pmb{Q})\pmb{Q}\right]:\pmb{\varphi}\,dxdt \\
			&=-\int_0^T\int_\Omega M\pmb{H}:\pmb{\varphi}\,dxdt
			\end{split}
			\end{equation}
			\begin{equation}
			\label{eq:re_weakr}
			\int_0^T\int_\dom r\, \phi_t dxdt - \int_\dom r(T,x) \phi(T,x) dx +\int_\dom r_0(x)\phi(0,x) dx =- \int_0^T\int_\dom P(Q):Q_t \,\phi \,dx dt,
			\end{equation}
			and
			\begin{equation}
	    \label{eq:re_weakformHr}
	    \int_0^T\int_\Omega \pmb{H}:\pmb{\varphi}\,dxdt=-\int_0^T\int_\Omega \left(L \sum_{i,j=1}^d\Grad Q_{ij}\cdot\Grad\varphi_{ij}  \right)\,dxdt-\int_0^T\int_\Omega rP(\pmb{Q}):\pmb{\varphi}\,dxdt,
	\end{equation}
		\end{subequations}
			for all smooth divergence-free function $\pmb{\psi}:[0,T)\times\Omega\to\mathbb{R}^{d}$, all trace-free symmetric matrix function $\pmb{\varphi} = (\varphi_{ij})_{i,j=1}^d:[0,T)\times \Omega\to \R^{d\times d}$ and smooth function $\phi:[0,T)\times \dom\to \R$ which are compactly supported within $\dom$ for every $t\in [0,T]$.

		\end{enumerate}


\end{definition}



Then for the treatment of the convection term, we consider a bilinear form
\begin{equation}
    \label{eq:convection_B_term}
    B(\pmb{u}, \pmb{v}) = (\pmb{u}\cdot\nabla)\pmb{v}+\frac{1}{2}(\nabla\cdot\pmb{u})\pmb{v}.
\end{equation}
It is not hard to verify the following properties of $B$ (See \cite{ NochettoGauge_FEM, ShenJie_error_first_order,Temam_NavierStokes} and the references therein).
\begin{lemma}
    \label{lem:properties_B}
    We define the trilinear form 
    \begin{equation}
        \label{eq:Tilde_B}
        \Tilde{B}(\pmb{u}, \pmb{v}, \pmb{\omega})=\langle B(\pmb{u}, \pmb{v}), \pmb{\omega}\rangle.
    \end{equation}
    Then 
    \begin{equation}
        \label{eq:anti_sym_B}
        \Tilde{B}(\pmb{u}, \pmb{v}, \pmb{\omega})=-\Tilde{B}(\pmb{u}, \pmb{\omega}, \pmb{v}),
    \end{equation}
    for all $\pmb{u}\in L^2(\Omega)$ with $L^2(\dom)$-integrable divergence, and $\pmb{v}, \pmb{\omega}\in H^1_0(\Omega)$. Moreover, $\Tilde{B}(\pmb{u}, \pmb{v}, \pmb{v})=0$.
\end{lemma}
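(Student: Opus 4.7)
The plan is to prove the antisymmetry~\eqref{eq:anti_sym_B} by a direct integration by parts in index notation, exploiting the two pieces of $B$ in~\eqref{eq:convection_B_term}: the convective part $(\pmb{u}\cdot\nabla)\pmb{v}$ and the divergence correction $\frac{1}{2}(\Div\pmb{u})\pmb{v}$. Once the antisymmetry is established, the second identity $\Tilde{B}(\pmb{u},\pmb{v},\pmb{v})=0$ follows immediately by setting $\pmb{\omega}=\pmb{v}$.

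Concretely, I would first assume the three fields are smooth with $\pmb{v},\pmb{\omega}\in C_c^\infty(\Omega)$ and write
\begin{equation*}
\Tilde{B}(\pmb{u},\pmb{v},\pmb{\omega})=\sum_{i,j=1}^d\int_\Omega u_i(\partial_i v_j)\omega_j\,dx+\frac{1}{2}\int_\Omega (\Div\pmb{u})(\pmb{v}\cdot\pmb{\omega})\,dx.
\end{equation*}
Integration by parts on the first integral, with the boundary contribution vanishing because of the compact support of $\pmb{v}\cdot\pmb{\omega}$, gives
\begin{equation*}
\sum_{i,j=1}^d\int_\Omega u_i(\partial_i v_j)\omega_j\,dx=-\sum_{i,j=1}^d\int_\Omega (\partial_i u_i)v_j\omega_j\,dx-\sum_{i,j=1}^d\int_\Omega u_i v_j(\partial_i\omega_j)\,dx.
\end{equation*}
Substituting this back and combining the two $\Div\pmb{u}$ contributions, exactly half of the $(\Div\pmb{u})(\pmb{v}\cdot\pmb{\omega})$ term survives with a negative sign, so
\begin{equation*}
\Tilde{B}(\pmb{u},\pmb{v},\pmb{\omega})=-\sum_{i,j=1}^d\int_\Omega u_i(\partial_i\omega_j)v_j\,dx-\frac{1}{2}\int_\Omega (\Div\pmb{u})(\pmb{v}\cdot\pmb{\omega})\,dx=-\Tilde{B}(\pmb{u},\pmb{\omega},\pmb{v}),
\end{equation*}
which is~\eqref{eq:anti_sym_B}.

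The only step requiring some care is extending the identity from this smooth setting to the stated regularity by a density argument: approximate $\pmb{u}$ by smooth fields in the graph norm $\|\pmb{u}\|_{L^2}+\|\Div\pmb{u}\|_{L^2}$ and $\pmb{v},\pmb{\omega}$ by elements of $C_c^\infty(\Omega)$ in $H^1_0(\Omega)$, and pass to the limit using joint continuity of each trilinear term in these topologies. In spatial dimension $d\leq 3$, the divergence correction is controlled via H\"older combined with the Sobolev embedding $H^1_0(\Omega)\hookrightarrow L^4(\Omega)$; the convective term is handled analogously, with an interpolation between $L^2$ and $H^1_0$ used to pair $\pmb{u}$ against the $L^4$ factor $\pmb{v}$ or $\pmb{\omega}$ in $d=3$. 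This continuity check is the main technical obstacle, but it is standard and is carried out for instance in \cite{Temam_NavierStokes}; with it in place, both conclusions of the lemma follow.
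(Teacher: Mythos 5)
The paper does not supply a proof of this lemma at all: it simply says the properties ``are not hard to verify'' and refers to \cite{NochettoGauge_FEM, ShenJie_error_first_order, Temam_NavierStokes}. Your core computation --- writing out the two pieces of $B$ in index notation, integrating by parts on the convective term, and observing that the induced $(\Div\pmb{u})(\pmb{v}\cdot\pmb{\omega})$ contribution combines with the explicit one to flip sign --- is exactly the standard derivation in those references, and it is correct. The identity $\Tilde{B}(\pmb{u},\pmb{v},\pmb{v})=0$ then follows by taking $\pmb{\omega}=\pmb{v}$ in the antisymmetry, as you say.

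The one point that does not hold up is the final density/continuity step as you sketch it. To extend from smooth fields to $\pmb{u}\in L^2(\dom)$ with $\Div\pmb{u}\in L^2(\dom)$ and $\pmb{v},\pmb{\omega}\in H^1_0(\dom)$, you need the trilinear map $(\pmb{u},\pmb{v},\pmb{\omega})\mapsto\int_\dom u_i(\partial_i v_j)\omega_j\,dx$ to be jointly continuous in the relevant topologies. But by H\"older, with $\partial_i v_j\in L^2$ and $\omega_j\in L^6$ (the best Sobolev embedding gives in $d=3$) the remaining factor must be in $L^3$; in $d=2$ one similarly needs $u_i\in L^{2+\varepsilon}$ for some $\varepsilon>0$. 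A bare $L^2$ bound on $\pmb{u}$ does not supply this, and the interpolation you invoke is of no help because there is no gradient control on $\pmb{u}$ to interpolate against --- the $L^2$--$H^1_0$ interpolation applies only to $\pmb{v}$ and $\pmb{\omega}$. This shortfall is really inherited from the lemma's stated hypotheses rather than a flaw you introduced: in every place the paper actually invokes the identity, the first slot is $\uold\in H^1_0(\dom)$, for which $\Tilde{B}$ is jointly continuous on $H^1_0\times H^1_0\times H^1_0$ and the density extension goes through. So your proof is sound provided the extension step is carried out under $\pmb{u}\in H^1_0(\dom)$ (or at least $\pmb{u}\in L^3(\dom)$); as written, the claim of continuity on $L^2\times H^1_0\times H^1_0$ is the one step that would fail.
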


The following cancellation property will play a key role in deducing the discrete energy dissipation law in the next section.

\begin{lemma}
    \label{lem:Sigma_S_cancellation}
    For any $\pmb{u}\in H^1_{0}(\Omega)$, we have
    \begin{equation}
        \label{eq:Sigma_S_cancellation_property}
        \left\langle \nabla \pmb{u}, \Sigma(\pmb{Q}, \pmb{H})\right\rangle+\left\langle\pmb{H}, S(\pmb{Q}, \pmb{u})\right\rangle=0,
    \end{equation}
    for all symmetric trace-free matrices $\pmb{Q}\in H^2(\Omega), \pmb{H}\in L^2(\Omega)$.
\end{lemma}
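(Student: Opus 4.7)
The identity is purely pointwise and algebraic — no integration by parts is needed — so the hypothesis $\pmb{u}\in H^1_0(\Omega)$ serves only to guarantee that the relevant products lie in $L^1(\Omega)$. The plan is to decompose $\nabla\pmb{u}=\pmb{D}+\pmb{W}$ into its symmetric and skew-symmetric parts, and to split $\Sigma(\pmb{Q},\pmb{H})$ according to the symmetries of its constituent terms. Since $\pmb{Q}$ and $\pmb{H}$ are symmetric, $\pmb{Q}\pmb{H}-\pmb{H}\pmb{Q}$ is skew while the remaining three summands of $\Sigma$ are symmetric; by orthogonality of symmetric and skew matrices in the Frobenius product, $\nabla\pmb{u}:\Sigma$ reduces to $\pmb{W}:(\pmb{Q}\pmb{H}-\pmb{H}\pmb{Q})$ plus the $\pmb{D}$-contractions of the three symmetric terms.

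Next I would match these four surviving pieces one by one against the four summands of $\pmb{H}:S(\pmb{u},\pmb{Q})$ using the cyclic identities
\begin{equation*}
\pmb{A}:(\pmb{B}\pmb{C}) = (\pmb{B}^\top \pmb{A}):\pmb{C} = (\pmb{A}\pmb{C}^\top):\pmb{B},
\end{equation*}
together with symmetry of $\pmb{Q}$. This yields $\pmb{W}:(\pmb{Q}\pmb{H}-\pmb{H}\pmb{Q}) = -\pmb{H}:(\pmb{W}\pmb{Q}-\pmb{Q}\pmb{W})$, which cancels the commutator part of $S$, and $\pmb{D}:(\pmb{Q}\pmb{H}+\pmb{H}\pmb{Q}) = \pmb{H}:(\pmb{Q}\pmb{D}+\pmb{D}\pmb{Q})$, which handles the $\xi$-symmetric Jaumann-type terms. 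The isotropic $\tfrac{2\xi}{d}$ contributions pair off trivially with opposite signs.

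The only subtle piece — really a bookkeeping point rather than a genuine obstacle — is the final scalar-weighted term. From $\Sigma$ we get $2\xi(\pmb{D}:\pmb{Q})(\pmb{Q}:\pmb{H})$, while from $S$ we get $-2\xi(\pmb{D}:\pmb{Q})\,\pmb{H}:\!\bigl(\pmb{Q}+\tfrac{1}{d}\pmb{I}\bigr)$. The mismatch is exactly what the modification of $\Sigma$ in \eqref{eq:sigma} produces (a gradient having been absorbed into the pressure), but it evaporates because $\pmb{H}$ takes values in $\mathcal{S}_0^d$, so $\pmb{H}:\pmb{I}=\tr\pmb{H}=0$ and only the $\pmb{Q}$-part of $\pmb{Q}+\tfrac{1}{d}\pmb{I}$ contributes. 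Summing all the identities pointwise and integrating over $\Omega$ then yields \eqref{eq:Sigma_S_cancellation_property}.
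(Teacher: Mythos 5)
Your proof is correct and essentially the same as the paper's: both arguments proceed by term-by-term cancellation using the cyclic trace identity together with the symmetry, skew-symmetry, and trace-free properties of $\pmb{Q}$, $\pmb{H}$, $\pmb{W}$, $\pmb{D}$; the paper just performs the $\nabla\pmb{u}=\pmb{D}+\pmb{W}$ split implicitly inside each pairing rather than announcing it upfront as you do.
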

\begin{proof}
From definitions~\eqref{eq:S} and~\eqref{eq:sigma}, we have
\begin{equation*}
    \label{eq:H_S}
    \left\langle  \pmb{H}, S(\pmb{Q}, \pmb{u})\right\rangle=\left\langle \pmb{H}, \pmb{W}\pmb{Q}-\pmb{Q}\pmb{W}+\xi\left(\pmb{Q}
    \pmb{D}+\pmb{D}\pmb{Q}\right)+\frac{2\xi}{d}\pmb{D}-2\xi(\pmb{D}:\pmb{Q})
    \left(\pmb{Q}+\frac{1}{d}\pmb{I}\right) \right\rangle,
\end{equation*}
\begin{equation*}
    \label{eq:Sigma_Gradientu}
    \begin{aligned}
        \left\langle \nabla\pmb{u}, \Sigma(\pmb{Q}, \pmb{H})\right\rangle&=\left\langle \nabla\pmb{u}, \pmb{Q}\pmb{H}-\pmb{H}\pmb{Q}-\xi\left(\pmb{H}\pmb{Q}+\pmb{Q}\pmb{H} \right)-\frac{2\xi}{d}\pmb{H}+2\xi(\pmb{Q}:\pmb{H})\pmb{Q}\right\rangle.
    \end{aligned}
\end{equation*}
Comparing these terms and utilizing the symmetry and trace-free property of $\pmb{H}$ and $\pmb{Q}$, we observe that
\begin{equation*}
    \label{eq:uSSH_term_1}
    \begin{aligned}
          \left\langle \pmb{H}, \pmb{W}\pmb{Q}-\pmb{Q}\pmb{W} \right\rangle&=\int_\dom \sum_{i, j, k =1}^d H_{ij}(W_{ik}Q_{kj}-Q_{ik}W_{kj})\\
          &=\int_\dom \sum_{i,j,k=1}^d (W_{ik}H_{ij}Q_{jk}-W_{kj}Q_{ki}H_{ij})=\left\langle  \pmb{W},  \pmb{H}\pmb{Q}-\pmb{Q}\pmb{H}\right\rangle=-\left\langle \nabla\pmb{u}, \pmb{Q}\pmb{H}-\pmb{H}\pmb{Q}\right\rangle,
    \end{aligned}
\end{equation*}
\begin{equation*}
    \label{eq:uSSH_term_2}
    \begin{aligned}
          \left\langle \pmb{H}, \xi(\pmb{Q}\pmb{D}+\pmb{D}\pmb{Q}) \right\rangle&=\int_\dom \sum_{i, j, k =1}^d \xi H_{ij}(Q_{ik}D_{kj}+D_{ik}Q_{kj})\\
          &=\int_\dom \sum_{i,j,k=1}^d \xi(D_{ik}H_{ij}Q_{jk}+D_{kj}Q_{ki}H_{ij})\\
          &=\left\langle  \pmb{D},  \xi(\pmb{H}\pmb{Q}+\pmb{Q}\pmb{H})\right\rangle=\xi\left\langle \nabla\pmb{u}, \pmb{H}\pmb{Q}+\pmb{Q}\pmb{H}\right\rangle,
    \end{aligned}
\end{equation*}
\begin{equation*}
    \label{eq:uSSH_term_3}
    \left\langle \pmb{H}, \frac{2\xi}{d}\pmb{D}\right\rangle = \left\langle \nabla\pmb{u}, \frac{2\xi}{d}\pmb{H}\right\rangle,\quad \left\langle \pmb{H}, -2\xi(\pmb{D}:\pmb{Q})\pmb{Q}\right\rangle=-2\xi\int_{\dom}(\nabla\pmb{u}:\pmb{Q})(\pmb{H}:\pmb{Q})dx=-\left\langle \nabla\pmb{u}, 2\xi(\pmb{Q}:\pmb{H})\pmb{Q}\right\rangle,
\end{equation*}
\begin{equation*}
    \label{eq:uSSH_term_4}
    \left\langle \pmb{H}, \frac{2\xi}{d}(\pmb{D}:\pmb{Q})\pmb{I}\right\rangle=\frac{2\xi}{d}\int_{\dom}(\pmb{D}:\pmb{Q})\tr(\pmb{H})dx =0.
\end{equation*}
From these calculations, we can conclude that \eqref{eq:Sigma_S_cancellation_property} holds true.
\end{proof}

We also recall the following lemma from~\cite[Theorem 4.1]{GWY2020}, establishing Lipschitz continuity of $P$. We will use this lemma to pass to the limit in the numerical approximations introduced below and obtain convergence to a weak solution as in Definition~\ref{def:weakreformulation}. 
\begin{lemma}\label{lem:P_lipschitz}
    The function $P$ is Lipschitz continuous, that is, there exists constant $\Tilde{L}>0$ such that for any matrix $\pmb{Q}, \delta\pmb{Q}\in\mathbb{R}^{3\times 3}$, 
    \begin{equation}
        \label{eq:P_lipschitz}
        \lvert P(\pmb{Q}+\delta\pmb{Q})-P(\pmb{Q})\rvert\leq \Tilde{L}\,\lvert \delta\pmb{Q}\rvert.
    \end{equation}
\end{lemma}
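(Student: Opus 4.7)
\medskip

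\textbf{Proof proposal.} The plan is to establish the estimate by showing that $P$ is continuously differentiable on all of $\mathbb{R}^{d\times d}$ with a uniformly bounded Fr\'echet derivative, then invoking the mean value inequality along straight line segments to obtain $|P(\pmb{Q}+\delta\pmb{Q})-P(\pmb{Q})|\leq \tilde L\,|\delta\pmb{Q}|$ with $\tilde L=\sup_{\pmb{Q}}|DP(\pmb{Q})|_{\mathrm{op}}$. The key structural observation, which makes this work, is that by the choice of $A_0>0$ in~\eqref{eq:auxiliary_variable_r} the scalar $r(\pmb{Q})$ is bounded below by $\sqrt{2A_0}>0$ for every $\pmb{Q}$, so $P=V/r$ is a quotient of smooth expressions whose denominator never vanishes.

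First I would compute the derivative explicitly. Writing $h(\pmb{Q})\coloneqq \frac12 r(\pmb{Q})^2=\frac{a}{2}\mathrm{tr}(\pmb{Q}^2)-\frac{b}{3}\mathrm{tr}(\pmb{Q}^3)+\frac{c}{4}\mathrm{tr}^2(\pmb{Q}^2)+A_0$, one has $\delta h/\delta\pmb{Q}=V(\pmb{Q})$, hence
\begin{equation*}
Dr(\pmb{Q})[\pmb{H}]=\frac{V(\pmb{Q}):\pmb{H}}{r(\pmb{Q})}=P(\pmb{Q}):\pmb{H},
\end{equation*}
and therefore
\begin{equation*}
DP(\pmb{Q})[\pmb{H}]=\frac{DV(\pmb{Q})[\pmb{H}]}{r(\pmb{Q})}-\frac{V(\pmb{Q})\bigl(P(\pmb{Q}):\pmb{H}\bigr)}{r(\pmb{Q})^2},
\end{equation*}
with
\begin{equation*}
DV(\pmb{Q})[\pmb{H}]=a\pmb{H}-b\bigl(\pmb{Q}\pmb{H}+\pmb{H}\pmb{Q}-\tfrac{2}{d}\mathrm{tr}(\pmb{Q}\pmb{H})\pmb{I}\bigr)+c\bigl(2\mathrm{tr}(\pmb{Q}\pmb{H})\pmb{Q}+\mathrm{tr}(\pmb{Q}^2)\pmb{H}\bigr).
\end{equation*}

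Next I would bound $|DP(\pmb{Q})|_{\mathrm{op}}$ by analysing the two regimes separately. On any compact neighborhood of $\pmb{0}$, the expression is a continuous function of $\pmb{Q}$ and hence locally bounded. For the large $|\pmb{Q}|$ regime, the essential point is that the quartic term in $h$ dominates, giving $r(\pmb{Q})\gtrsim |\pmb{Q}|^2$ once $|\pmb{Q}|$ exceeds a threshold depending on $a,b,c,A_0$. Then the growth orders balance: $|DV(\pmb{Q})[\pmb{H}]|\lesssim (1+|\pmb{Q}|^2)|\pmb{H}|$ so the first term is controlled by a constant times $|\pmb{H}|$, while $|V(\pmb{Q})|\lesssim 1+|\pmb{Q}|^3$ and $|P(\pmb{Q})|=|V(\pmb{Q})|/r(\pmb{Q})\lesssim 1+|\pmb{Q}|$, so the second term is bounded by $(1+|\pmb{Q}|^3)(1+|\pmb{Q}|)|\pmb{H}|/|\pmb{Q}|^4\lesssim |\pmb{H}|$. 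Combining with the compact-set bound produces a global constant $\tilde L$.

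The main obstacle is really just the careful bookkeeping of the cancellations at infinity: one must verify that the numerator growth in $V\cdot(P:\pmb{H})$ is exactly compensated by the $r^{2}\sim|\pmb{Q}|^{4}$ in the denominator, which is why the quartic coefficient $c>0$ in $\mathcal{F}_B$ is crucial. Once the uniform bound on $DP$ is in hand, the conclusion follows from
\begin{equation*}
P(\pmb{Q}+\delta\pmb{Q})-P(\pmb{Q})=\int_0^1 DP\bigl(\pmb{Q}+t\,\delta\pmb{Q}\bigr)[\delta\pmb{Q}]\,dt,
\end{equation*}
giving~\eqref{eq:P_lipschitz} with $\tilde L=\sup_{\pmb{Q}\in\mathbb{R}^{d\times d}}|DP(\pmb{Q})|_{\mathrm{op}}$.
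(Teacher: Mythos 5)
The paper does not include a proof of this lemma at all --- it simply cites Theorem 4.1 of the reference \cite{GWY2020}, so there is no in-paper argument to compare against. Your self-contained proof --- show $P=V/r$ is $C^1$ with uniformly bounded Fr\'echet derivative, then apply the mean value inequality along line segments --- is a standard and correct strategy, and your bookkeeping of growth orders is exactly the point: the quartic term $\tfrac{c}{4}\tr^2(\pmb{Q}^2)$ with $c>0$ forces $r(\pmb{Q})\gtrsim|\pmb{Q}|^2$ for large $|\pmb{Q}|$, which balances $|V|\lesssim 1+|\pmb{Q}|^3$ and $|DV[\pmb{H}]|\lesssim(1+|\pmb{Q}|^2)|\pmb{H}|$ so that $|DP[\pmb{H}]|\lesssim|\pmb{H}|$ uniformly, and the lower bound $r\geq\sqrt{2A_0}>0$ handles the compact region.

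One small technical gap: you compute $Dr(\pmb{Q})[\pmb{H}]=P(\pmb{Q}):\pmb{H}$ by appealing to the identity $\delta h/\delta\pmb{Q}=V(\pmb{Q})$, but that identity holds only on the subspace of symmetric matrices (for general $\pmb{Q}$ one has, e.g., $\partial_{Q_{ij}}\tr(\pmb{Q}^2)=2Q_{ji}$ rather than $2Q_{ij}$, and similarly $\partial\tr(\pmb{Q}^3)=3(\pmb{Q}^2)^\top$). Since the lemma is stated for arbitrary $\pmb{Q},\delta\pmb{Q}\in\mathbb{R}^{3\times 3}$, you should either differentiate $P=V/r$ directly using the true $Dh$ --- which has the same cubic growth, so $|Dr[\pmb{H}]|\lesssim(1+|\pmb{Q}|)|\pmb{H}|$ and your estimate of the second term of $DP$ goes through unchanged --- or note that in every application in this paper $\pmb{Q}$ is trace-free symmetric, so the restriction to that subspace suffices. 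Either remark closes the argument; apart from this the proof is sound.
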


\section{Construction and Analysis of the Numerical Scheme}\label{sec:numerical_scheme}

We start by describing the first-order semi-discrete numerical scheme for system \eqref{eq:ut}--\eqref{eq:H}. It is based on the projection method, a fractional step method widely used for the numerical approximation of the Navier-Stokes equations~\cite{Chorin1967, ShenJie_error_first_order, Temam_NavierStokes}. It consists of two steps. Let $\Delta t>0$ be the time step size. 

Given initial data $(\pmb{u}^0, \pmb{Q}^0, p^0)\in H^1_0(\Omega)\times \left(H^1_0(\Omega)\cap H^2(\Omega)\right)\times H^2(\Omega)$, 
we set $\pmb{P}^0=P(\pmb{Q}^0), r^0=r(\pmb{Q}^0)$ and $(\pmb{u}^{-1}, \pmb{Q}^{-1}, {p}^{-1}, r^{-1})=(\pmb{u}^{0}, \pmb{Q}^{0}, {p}^{0}, r^{0})$. Then for $n=0,1,\dots$, we update $(\pmb{u}^{n+1}, \pmb{Q}^{n+1}, \pmb{p}^{n+1}, r^{n+1})$ through the following two steps.

\begin{enumerate}
	\item[\textbf{Step 1}] 
	
	Given $(\pmb{u}^{n}, \pmb{Q}^{n}, p^n, r^{n})\in H_0^1(\Omega)\times \left(H^1_0(\Omega)\cap H^2(\Omega)\right)\times H^2(\Omega)\times L^2(\Omega)$, we seek $(\uint, \pmb{Q}^{n+1}, r^{n+1})\in H_0^1(\Omega)\times \left(H^1_0(\Omega)\cap H^2(\Omega)\right)\times L^2(\Omega)$ as a weak solution of the following system with boundary conditions $\uint|_{\partial\dom}=0,\, \Qnew|_{\partial\dom}=0$,
	\begin{subequations}\label{eq:uQevolution}
		\begin{align}
		\left\langle\Dtuone, \pmb{\psi}\!\right\rangle + \Tilde{B}(\uold, \uint, \pmb{\psi}) &=-\left\langle\nabla p^n, \pmb{\psi}\right\rangle -\mu\left\langle  \nabla\uint, \nabla\pmb{\psi}\right\rangle-\left\langle \sigmanew, \nabla\pmb{\psi}\right\rangle \label{eq:Dtu_step1}, \\
   &\quad -\left\langle\Hnew\nabla\Qold, \pmb{\psi}\right\rangle\notag\\
		  \left\langle\frac{\Qnew\!\!-\Qold}{\Delta t}, \testvar\!\right\rangle  + \left\langle\uint\!\cdot \nabla \Qold, \testvar\right\rangle  &=\left\langle\Snew, \testvar\right\rangle +M\left\langle\Hnew, \testvar\right\rangle,\label{eq:DtQ_step1}\\ 
		 r^{n+1}-r^n&=\Pold:(\Qnew-\Qold)\label{eq:Dtr_step1},\\
	   \left\langle \Hnew, \pmb{\phi}\right\rangle &= -L\left\langle \nabla \Qnew,\nabla\pmb{\phi}\right\rangle-\left\langle r^{n+1} \Pold, \pmb{\phi} \right\rangle\label{eq:Hn+1}	
    \end{align}
	\end{subequations}
	for all smooth vector-valued function $\pmb{\psi}$ and smooth matrix-valued function $\pmb{\varphi}, \pmb{\phi}$ with compact support in $[0,T)\times\Omega$.
	where
	\begin{equation}
	\label{eq:SnSigman}
	\Snew={s}(\uint,\Qold),\quad\quad \sigmanew={\Sigma}(\Qold,\Hnew),\quad\quad \Pold=P(\Qold)\quad\quad \text{ for all } n\geq0.
	\end{equation}

	\item[\textbf{Step 2}] 
	
	Then we define $(\unew, p^{n+1})\in H^1(\Omega)\times H^2(\Omega)$ through the following equations with boundary condition $\unew\cdot\pmb{n}|_{\partial\Omega}=0$,
	\begin{subequations}\label{eq:pressureupdate}
		\begin{align}
		&\Dtutwo= -2\,(\nabla p^{n+1}-\nabla p^n)\label{eq:scheme_projection}, \\  &\left\langle\unew, \nabla\pmb{\psi} \right\rangle =0\label{eq:scheme_divergencefree},
		\end{align}
	\end{subequations}
	for all smooth vector-valued function $\pmb{\psi}$ with compact support in $[0,T)\times \Omega$.

\end{enumerate}

\begin{remark}\label{rem:Step2_Helmholtz_decomposition}
	The second step can be understood as applying the Helmholtz decomposition to $\uint$, in particular, $\unew=\mathcal{P}{\uint}$. 
\end{remark}
Here $s= s(\pmb{u},\pmb{Q})$ is given by
\begin{equation}
\label{eq:numerical_s}
s(\pmb{u}, \pmb{Q})\coloneqq{S}(\pmb{u}, \pmb{Q})-\frac{2\xi}{d^2}(\Div \pmb{u})\,\pmb{I}.
\end{equation}
Clearly, if $\pmb{u}$ is divergence free, this definition coincides with the definition of $S$ in~\eqref{eq:S}. However, the velocity field $\uint$ obtained in the first step of the scheme is not necessarily divergence free and hence $S$ may not be trace-free, a fact which is needed to show that the scheme conserves the trace-free properties of $\pmb{Q}$ and $\pmb{H}$, as we will see later. From the proof of Lemma~\ref{lem:Sigma_S_cancellation}, we notice that the trace-free property of $\pmb{H}$ is in fact necessary for obtaining the cancellation property~\eqref{eq:Sigma_S_cancellation_property}, which in turn is needed for showing the discrete energy balance.


Then the following version of Lemma~\ref{lem:Sigma_S_cancellation} holds:
\begin{lemma}
    \label{lem:S_Sigma_cancellation_numerical}
    For any $\pmb{u}\in H^1_0(\Omega)$, we have
    \begin{equation}
    \label{eq:S_Sigma_cancellation_numerical}
        \left\langle \nabla \pmb{u}, \Sigma(\pmb{Q}, \pmb{H})\right\rangle+\left\langle\pmb{H}, s( \pmb{u}, \pmb{Q})\right\rangle=0,
    \end{equation}
    for every symmetric trace-free matrix $\pmb{Q}\in H^1_0(\Omega)\cap H^2(\Omega)$, $  \pmb{H}\in L^2(\Omega)$.
\end{lemma}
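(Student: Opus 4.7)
The plan is to reduce Lemma~\ref{lem:S_Sigma_cancellation_numerical} to the already-proved Lemma~\ref{lem:Sigma_S_cancellation} by exploiting the trace-free property of $\pmb{H}$. The only difference between $s$ and $S$ is the correction term $-\frac{2\xi}{d^2}(\nabla\cdot\pmb{u})\pmb{I}$, which is a scalar multiple of the identity. When paired with $\pmb{H}$ in the $L^2$ inner product, this correction should integrate against $\tr(\pmb{H})$, which vanishes by hypothesis.

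Concretely, from the definition~\eqref{eq:numerical_s} I would write
\begin{equation*}
\langle \pmb{H}, s(\pmb{u},\pmb{Q})\rangle = \langle \pmb{H}, S(\pmb{u},\pmb{Q})\rangle - \frac{2\xi}{d^2}\int_\Omega (\nabla\cdot\pmb{u})\,\tr(\pmb{H})\,dx,
\end{equation*}
and since $\pmb{H}$ is trace-free pointwise a.e., the second term is zero. Hence
\begin{equation*}
\left\langle \nabla \pmb{u}, \Sigma(\pmb{Q}, \pmb{H})\right\rangle+\left\langle\pmb{H}, s(\pmb{u}, \pmb{Q})\right\rangle = \left\langle \nabla \pmb{u}, \Sigma(\pmb{Q}, \pmb{H})\right\rangle+\left\langle\pmb{H}, S(\pmb{u}, \pmb{Q})\right\rangle,
\end{equation*}
and Lemma~\ref{lem:Sigma_S_cancellation} gives the conclusion. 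The regularity assumptions match: the hypothesis $\pmb{Q}\in H^1_0(\Omega)\cap H^2(\Omega)$ is stronger than the $H^2$ requirement of Lemma~\ref{lem:Sigma_S_cancellation}, while $\pmb{u}\in H^1_0(\Omega)$ and $\pmb{H}\in L^2(\Omega)$ are identical, and the products $\pmb{Q}\pmb{H}$, $\pmb{H}\pmb{Q}$ are well-defined in $L^2$ via the Sobolev embedding $H^2(\Omega)\hookrightarrow L^\infty(\Omega)$ in dimensions $d=2,3$.

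There is no real obstacle here; the lemma is essentially a one-line corollary of the previous cancellation identity together with $\tr(\pmb{H})=0$. The content of the result is structural rather than analytical: it confirms that the slight modification of $S$ to $s$, introduced to preserve the trace-free property of $\pmb{H}$ in the discrete setting when $\nabla\cdot\uint\neq 0$, does not destroy the antisymmetric pairing between the flow-induced terms in the $\pmb{Q}$-equation and the elastic stress in the momentum equation. This pairing is what will later yield the discrete energy balance for the scheme.
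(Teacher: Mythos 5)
Your proposal is correct and follows exactly the same approach as the paper: the paper's one-line proof likewise observes that the extra term $\frac{2\xi}{d^2}(\nabla\cdot\pmb{u})\pmb{I}$ pairs with $\tr(\pmb{H})=0$ and hence vanishes, reducing the claim to Lemma~\ref{lem:Sigma_S_cancellation}.
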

\begin{proof}
	The proof is the same as the proof of Lemma~\ref{lem:Sigma_S_cancellation} after noting that $\frac{2\xi}{d^2}(\Div u)\tr \pmb{H}=0$.
\end{proof}

\subsection{Well-posedness of the scheme}
First, we need to guarantee a solution of~\eqref{eq:uQevolution}--\eqref{eq:pressureupdate} with the required properties exists at every step $n$. We start by noting that the scheme preserves the trace-free and symmetry property of $\pmb{Q}$ and $\pmb{H}$, i.e., if $\Qold$ is trace-free and symmetric, then $\Qnew$ and $\pmb{H}^{n+1}$ will be also. Since the second step of the scheme does not modify $\pmb{Q}$ and $\pmb{H}$, we only need to consider the first step:
%

\begin{lemma}
    \label{lem:Q-tensor_symmetry_trace-free}
    If $\Qold$ is trace-free and symmetric, then $\Qnew$ and $\pmb{H}^{n+1}$ computed through \eqref{eq:uQevolution} are also trace-free and symmetric.
\end{lemma}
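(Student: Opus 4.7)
The plan is to exploit the freedom in the matrix-valued test functions $\pmb{\varphi}$ and $\pmb{\phi}$ appearing in \eqref{eq:DtQ_step1} and \eqref{eq:Hn+1}: by choosing tests that single out the trace part and the antisymmetric part of the tensors, I can reduce the question to showing that the trace $\tr(\Qnew)$ and the antisymmetric part $R_Q := \Qnew - (\Qnew)^\top$ both vanish identically, and then derive the corresponding statement for $\Hnew$ from \eqref{eq:Hn+1}.

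For the trace, I would test \eqref{eq:DtQ_step1} with $\pmb{\varphi} = \phi\,\pmb{I}$ for an arbitrary scalar $\phi \in C_c^\infty(\Omega)$. The convective term $\langle \uint\cdot\nabla\Qold,\phi\pmb{I}\rangle$ equals $\int_\Omega \phi\,(\uint\cdot\nabla\tr\Qold)\,dx = 0$ by the inductive hypothesis $\tr\Qold = 0$, and $\langle\Snew,\phi\pmb{I}\rangle = 0$ by the very construction of $s$ in \eqref{eq:numerical_s} (a direct computation on $S$ in \eqref{eq:S} shows $\tr S = \tfrac{2\xi}{d}\Div\uint$, which is precisely cancelled by the added $-\tfrac{2\xi}{d^2}(\Div\uint)\pmb{I}$). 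Testing \eqref{eq:Hn+1} with $\phi\,\pmb{I}$ and using that $V$ preserves trace-freeness ($\tr V(\pmb{Q}) = (a + c\tr\pmb{Q}^2)\tr\pmb{Q}$, which vanishes on traceless $\Qold$, hence $\tr\Pold = 0$) gives $\langle\tr\Hnew,\phi\rangle = -L\langle\nabla\tr\Qnew,\nabla\phi\rangle$. Substituting back yields the weak form
\begin{equation*}
\tfrac{1}{\Delta t}\langle\tr\Qnew,\phi\rangle + ML\langle\nabla\tr\Qnew,\nabla\phi\rangle = 0 \qquad \forall\, \phi\in C_c^\infty(\Omega),
\end{equation*}
together with the boundary condition $\tr\Qnew|_{\partial\Omega} = 0$ inherited from $\Qnew|_{\partial\Omega} = 0$. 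Uniqueness for this coercive elliptic problem forces $\tr\Qnew \equiv 0$, and then $\tr\Hnew \equiv 0$ follows from the rearranged identity.

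For symmetry I would run the analogous argument with antisymmetric test matrices. Since $\Qold$ is symmetric, a case-by-case check of each summand in \eqref{eq:S} shows that $s(\uint,\Qold)$ is symmetric ($WQ-QW$, $\xi(QD+DQ)$, $\tfrac{2\xi}{d}D$, and $2\xi(D{:}Q)(Q+\tfrac{1}{d}\pmb{I})$ are all manifestly symmetric when $Q$ is), and $\uint\cdot\nabla\Qold$ is symmetric componentwise; both therefore pair to zero with antisymmetric test functions. Similarly $\Pold = V(\Qold)/r(\Qold)$ is symmetric because $V$ of a symmetric matrix is symmetric, so the $r^{n+1}\Pold$ term in \eqref{eq:Hn+1} drops out. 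Writing $R_Q$ and $R_H$ for the antisymmetric parts, testing \eqref{eq:DtQ_step1} against an arbitrary antisymmetric $\pmb{\varphi}$ gives $\tfrac{1}{\Delta t}\langle R_Q,\pmb{\varphi}\rangle = M\langle R_H,\pmb{\varphi}\rangle$, so $R_H = R_Q/(M\Delta t)$, and testing \eqref{eq:Hn+1} against antisymmetric $\pmb{\phi}$ gives $\langle R_H,\pmb{\phi}\rangle = -L\langle\nabla R_Q,\nabla\pmb{\phi}\rangle$. Eliminating $R_H$ produces a coercive elliptic equation for $R_Q$ with homogeneous Dirichlet data, forcing $R_Q \equiv 0$; hence $\Qnew$ and in turn $\Hnew$ are symmetric.

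The main conceptual obstacle is the first step: verifying that the modification of $S$ to $s$ in \eqref{eq:numerical_s} actually restores the trace cancellation when $\Div\uint \neq 0$, and verifying that the $r^{n+1}\Pold$ term in \eqref{eq:Hn+1} has no trace or antisymmetric contribution (which relies crucially on $\Qold$ already being trace-free and symmetric, so the argument is genuinely inductive). Once those algebraic identities are in hand, the reduction to two well-posed scalar/antisymmetric elliptic problems is routine.
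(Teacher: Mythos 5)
Your proposal is correct and takes essentially the same approach as the paper: testing \eqref{eq:DtQ_step1} and \eqref{eq:Hn+1} with trace-part and antisymmetric-part test functions, using $\tr s=0$ and the symmetry/trace-freeness of $\Pold$ (both inherited inductively from $\Qold$) to kill the extraneous terms, and concluding by coercivity. The only cosmetic difference is that the paper plugs in the specific test functions $\tr(\Qnew)\mathbf{I}$ and $\Qnew-(\Qnew)^\top$ directly to obtain the energy identities $\frac{1}{\Delta t}\|\tr\Qnew\|^2+ML\|\nabla\tr\Qnew\|^2=0$ and $\frac{1}{2\Delta t}\|\Qnew-(\Qnew)^\top\|^2+\frac{LM}{2}\|\nabla(\Qnew-(\Qnew)^\top)\|^2=0$, whereas you test with arbitrary functions first and then invoke uniqueness for the resulting elliptic problem; these are equivalent.
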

\begin{proof}
	
	We use $\tr(\pmb{Q}^{n+1})\mathbf{I}$ (where $\mathbf{I}$ is the $d\times d$ identity matrix) as a test function in~\eqref{eq:DtQ_step1}:
	\begin{equation*}
	\left\langle\frac{\Qnew-\Qold}{\Delta t},\tr(\pmb{Q}^{n+1})\mathbf{I}\right\rangle  + \left\langle\uint\cdot \nabla \Qold,\tr(\pmb{Q}^{n+1})\mathbf{I}\right\rangle - \left\langle\Snew,\tr(\pmb{Q}^{n+1})\mathbf{I}\right\rangle = M\left\langle\Hnew, \tr(\pmb{Q}^{n+1})\mathbf{I}\right\rangle
	\end{equation*}
	which can be rewritten as
	\begin{equation*}
	\left\langle\frac{\tr(\Qnew)-\tr(\Qold)}{\Delta t},\tr(\pmb{Q}^{n+1})\right\rangle  + \left\langle\uint\cdot \nabla \tr(\Qold),\tr(\pmb{Q}^{n+1})\right\rangle - \left\langle\tr(\Snew),\tr(\pmb{Q}^{n+1})\right\rangle = M\left\langle\tr(\Hnew), \tr(\pmb{Q}^{n+1})\right\rangle.
	\end{equation*}
	By assumption, $\Qold$ is trace-free, hence this becomes
	\begin{equation}\label{eq:trQ1}
	\frac{1}{\Delta t}\norm{\tr(\Qnew)}^2 - \left\langle\tr(\Snew),\tr(\pmb{Q}^{n+1})\right\rangle = M\left\langle\tr(\Hnew), \tr(\pmb{Q}^{n+1})\right\rangle.
	\end{equation}	
    From the definition of $\Snew$ in \eqref{eq:numerical_s} and \eqref{eq:SnSigman}, it follows that
    \begin{equation}
        \label{eq:trace_s}
        \begin{aligned}
            \left\langle\tr(\Snew),\phi\right\rangle&=\left\langle\tr\left( S(\uint, \Qold) \right), \phi\right\rangle-\left\langle\frac{2\xi}{d^2}(\Div \uint)\tr(\pmb{I}), \phi\right\rangle\\
            &=\big\langle\tr(\Wint\Qold-\Qold\Wint)+\xi\tr(\Qold\Dint+\Dint\Qold)+\frac{2\xi}{d}\tr(\Dint)\\
            &\quad\,\,-2\xi(\Dint:\Qold)\tr(\Qold)-\frac{2\xi}{d}(\Dint:\Qold)\tr(\pmb{I})-\frac{2\xi}{d}(\Div\uint), \phi\big\rangle=0
        \end{aligned}
    \end{equation}
    for any test function $\phi: \dom\to \R$ with zero trace and contained in $H^2(\dom)$. In order to deal with the last term, we take $\tr(\Qnew)\mathbf{I}$ as a test function in~\eqref{eq:Hn+1}:
    \begin{equation*}
    \left\langle \Hnew, \tr(\Qnew)\mathbf{I}\right\rangle = -L\left\langle \nabla \Qnew,\nabla(\tr(\Qnew)\mathbf{I})\right\rangle-\left\langle r^{n+1} \Pold, \tr(\Qnew)\mathbf{I} \right\rangle,
    \end{equation*}
    which again, we can write as
    \begin{equation*}
    \left\langle \tr(\Hnew), \tr(\Qnew)\right\rangle = -L\norm{\Grad\tr(\Qnew)}^2-\left\langle r^{n+1} \tr(\Pold), \tr(\Qnew)\right\rangle.
    \end{equation*}
    Since $\tr(\Pold)=0$, we get
    \begin{equation*}
    \left\langle \tr(\Hnew), \tr(\Qnew)\right\rangle = -L\norm{\Grad\tr(\Qnew)}^2.
    \end{equation*}
    Plugging this into~\eqref{eq:trQ1}, we obtain
    	\begin{equation*}
    \frac{1}{\Delta t}\norm{\tr(\Qnew)}^2 - \left\langle\tr(\Snew),\tr(\pmb{Q}^{n+1})\right\rangle = -ML\norm{\Grad\tr(\Qnew)}^2\leq 0
    \end{equation*}
    and so $\tr(\Qnew)=0$ almost everywhere.

    For the symmetry, we consider $\pmb{Z}^{n+1}=\Qnew-(\Qnew)^{\top}$ as a test function in \eqref{eq:DtQ_step1}:
    \begin{align*}
    &\left\langle\frac{\Qnew-\Qold}{\Delta t},\Qnew-(\Qnew)^{\top}\right\rangle  + \left\langle\uint\cdot \nabla \Qold,\Qnew-(\Qnew)^{\top}\right\rangle - \left\langle\Snew,\Qnew-(\Qnew)^{\top}\right\rangle\\
    & \quad = M\left\langle\Hnew, \Qnew-(\Qnew)^{\top}\right\rangle
    \end{align*}
    which can be rewritten as
    \begin{align*}
  &\frac12\left\langle\frac{\Qnew-(\Qnew)^\top-(\Qold-(\Qold)^\top)}{\Delta t},\Qnew-(\Qnew)^{\top}\right\rangle  + \frac12\left\langle\uint\cdot \nabla (\Qold-(\Qold)^\top),\Qnew-(\Qnew)^{\top}\right\rangle\\
  &\quad  - \frac12\left\langle\Snew-(\Snew)^\top,\Qnew-(\Qnew)^{\top}\right\rangle = \frac{M}{2}\left\langle\Hnew-(\Hnew)^\top, \Qnew-(\Qnew)^{\top}\right\rangle
    \end{align*}
    Since $\Qold$ is assumed to be symmetric, a simple calculation reveals that $\Snew$ is also symmetric and so the previous identity simplifies to
    \begin{equation}\label{eq:symmQ}
    \frac{1}{2\Delta t}\norm{\Qnew-(\Qnew)^\top}^2  = \frac{M}{2}\left\langle\Hnew-(\Hnew)^\top, \Qnew-(\Qnew)^{\top}\right\rangle.
    \end{equation}
    We use $\pmb{Z}^{n+1}$ as a test function in the equation for $\Hnew$, equation~\eqref{eq:Hn+1}:
    \begin{equation*}
    \left\langle \Hnew,\Qnew-(\Qnew)^\top\right\rangle = -L\left\langle \nabla \Qnew,\nabla(\Qnew-(\Qnew)^\top)\right\rangle-\left\langle r^{n+1} \Pold, \Qnew-(\Qnew)^\top \right\rangle,
    \end{equation*}
    which we can rewrite as
        \begin{multline*}
    \frac12\left\langle \Hnew-(\Hnew)^\top,\Qnew-(\Qnew)^\top\right\rangle\\
     = -\frac{L}{2}\norm{ \nabla (\Qnew-(\Qnew)^\top)}^2-\frac12\left\langle r^{n+1} (\Pold-(\Pold)^\top), \Qnew-(\Qnew)^\top \right\rangle,
    \end{multline*}
    which noticing that $\Pold$ is symmetric since $\Qold$ is, becomes
    \begin{equation*}
    \frac12\left\langle \Hnew-(\Hnew)^\top,\Qnew-(\Qnew)^\top\right\rangle
    = -\frac{L}{2}\norm{ \nabla (\Qnew-(\Qnew)^\top)}^2.
    \end{equation*}
    Thus~\eqref{eq:symmQ} becomes
    \begin{equation*}
    \frac{1}{2\Delta t}\norm{\Qnew-(\Qnew)^\top}^2  = -\frac{LM}{2}\norm{ \nabla (\Qnew-(\Qnew)^\top)}^2\leq 0.
    \end{equation*}
    This implies that $\Qnew$ is symmetric.
\end{proof}
Next, we turn to the solvability of our numerical scheme, that is, given $(\uold, \Qold, p^n, \pmb{H}^n)\in H^1(\Omega)\times\left( H^2(\Omega)\cap H_0^1(\Omega)\right)\times H^2(\Omega)\times L^2(\Omega)$, whether there exists $(\unew, \Qnew, p^{n+1}, \pmb{H}^{n+1})\in H^1(\Omega)\times\left( H^2(\Omega)\cap H_0^1(\Omega)\right)\times H^2(\Omega)\times L^2(\Omega)$ solving equations \eqref{eq:uQevolution}-\eqref{eq:pressureupdate}. To see this, we will rewrite the scheme into a more straightforward form to implement and analyze.
From \eqref{eq:Dtr_step1}, we can express $r^{n+1}$ in terms of $\pmb{Q}^n$, $\Qnew$ and $r^n$ as
\begin{equation*}
    \label{eq:rn+1_rewrite}
    r^{n+1}=r^n+\Pold:(\pmb{Q}^{n+1}-\pmb{Q}^n).
\end{equation*}
Substituting $r^{n+1}$ into the formula for $\pmb{H}^{n+1}$ in \eqref{eq:Hn+1}, we obtain
\begin{equation}
    \label{eq:Hn+1_rewrite}
    \left\langle \Hnew, \pmb{\phi}\right\rangle=-L\left\langle \nabla \Qnew,\nabla\pmb{\phi}\right\rangle-\left\langle (\Pold:\Qnew) \Pold, \pmb{\phi} \right\rangle+\left\langle \pmb{F}^n, \pmb{\phi}\right\rangle
\end{equation}
where we denoted $(\Pold:\Qold)\,\Pold-r^n\Pold$ by $\pmb{F}^n$. Then we consider the following  problem: Given $(\uold,\Qold,p^n,\Hold,r^n)$, we want to find a unique $\begin{pmatrix}
    \pmb{u},  \pmb{Q}, \pmb{H}
\end{pmatrix}\in H_0^1(\Omega)\times H_0^1(\Omega)\times L^2(\Omega)$ such that
\begin{equation}
    \label{eq:Weak_formulation_well-posedness}
   a_{n+1}\left(\begin{pmatrix}
    \pmb{u}, \pmb{Q}, \pmb{H}
\end{pmatrix},\begin{pmatrix}
    \pmb{\psi}, \testvar, \pmb{\phi}\end{pmatrix}\right)=f_n \left(\begin{pmatrix}
    \pmb{\psi}, \testvar , \pmb{\phi}\end{pmatrix}\right)
\end{equation}
holds for all $\begin{pmatrix}
    \pmb{\psi}, \testvar,  \pmb{\phi}\end{pmatrix}\in H_0^1(\Omega)\times H_0^1(\Omega)\times L^2(\Omega)$. Here the bilinear form $a_{n+1}(\cdot, \cdot): \left(H^1_0({\dom})\times H^1_0({\dom})\times L^2(\dom)\right)\times\left(H^1_0({\dom})\times H^1_0({\dom})\times L^2(\dom)\right)\to\mathbb{R}$ is defined as:
    \begin{subequations}
    \label{eq:bilinear_form}
    
\begin{equation}
    \label{eq:bilinear_form_u_Q_H}
    \begin{aligned}
   &\quad\,\, a_{n+1}\left(\begin{pmatrix}
    \pmb{u}, \pmb{Q},  \pmb{H}
\end{pmatrix},\begin{pmatrix}
    \pmb{\psi}, \testvar, \pmb{\phi}\end{pmatrix}\right)\\
    &=
  \frac{1}{\Delta t}\int_\dom \pmb{u}\cdot\pmb{\psi}\,dx+\Tilde{B}(\uold, \pmb{u}, \pmb{\psi})+\mu\int_\dom \nabla\pmb{u}\cdot\nabla\pmb{\psi}\,dx+\int_\dom\Sigma(\Qold, \pmb{H}):\nabla\pmb{\psi}\,dx+\int_\Omega \pmb{H}\nabla \Qold\cdot \pmb{\psi}\,dx\\
  &\quad\,\,-\frac{1}{\Delta t}\int_\Omega \pmb{Q}:\pmb{\phi}\,dx-\int_\Omega\left(\pmb{u}\cdot\nabla\Qold\right):\pmb{\phi}\,dx+\int_\Omega s(\pmb{u}, \Qold):\pmb{\phi}\,dx+M\int_\Omega \pmb{H}:\pmb{\phi}\,dx\\
  &\quad\,\,+\frac{1}{\Delta t}\int_\Omega \pmb{H}:\testvar\,dx+\frac{L}{\Delta t}\int_\Omega \nabla\pmb{Q}:\nabla\testvar\,dx+\frac{1}{\Delta t}\int_\Omega \left(\Pold:\pmb{Q}\right)\left(\Pold:\testvar\right)\,dx\coloneqq \sum_{k=1}^{12} A_k^{n+1},
  \end{aligned}
\end{equation}
    and the right-hand side is
    \begin{equation}
        \label{eq:f_n_in_iteration}
        f_n\left(\begin{pmatrix}
            \pmb{\psi}, \pmb{\varphi}, \pmb{\phi}
        \end{pmatrix}
        \right)=\frac{1}{\Delta t}\left\langle \uold, \pmb{\psi} \right\rangle-\left\langle \nabla p^n,\pmb{\psi}\right\rangle+\frac{1}{\Delta t}\left\langle \Qold, \pmb{\phi} \right\rangle+\frac{1}{\Delta t}\left\langle \pmb{F}^n, \pmb{\varphi} \right\rangle.
    \end{equation}
    \end{subequations}
    From Lax-Milgram theorem\cite{evans10}, we infer that it is enough to show that $a_{n+1}$ is bounded and coercive. We will start with the boundedness. Given $(\uold,\Qold,\Hold)\in H^1_0(\Omega)\times \left(H^1_0(\Omega)\cap H^2(\Omega)\right)\times L^2(\Omega)$ and a fixed $\Delta t$, the terms $A_1^{n+1}, A_3^{n+1}, A_6^{n+1}, A_9^{n+1}, A_{10}^{n+1}, A_{11}^{n+1}$ can be bounded by Cauchy-Schwarz inequality as
    \begin{equation*}
        \label{eq:A_1}
       \lvert A_1^{n+1}\rvert \leq\frac{1}{\Delta t}\|\pmb{u}\|\,\|\testpsi\|\leq \frac{1}{\Delta t}\|\pmb{u}\|_{H^1_0}\,\|\testpsi\|_{H^1_0},
    \end{equation*}
    \begin{equation*}
        \label{eq:A_3}
       \lvert A_3^{n+1}\rvert \leq\mu\|\nabla\pmb{u}\|\,\|\nabla\testpsi\|\leq \mu\|\pmb{u}\|_{H^1_0}\,\|\testpsi\|_{H^1_0},
    \end{equation*}
    \begin{equation*}
        \label{eq:A_6}
       \lvert A_6^{n+1}\rvert \leq\frac{1}{\Delta t}\|\pmb{Q}\|\,\|\testphi\|\leq \frac{1}{\Delta t}\|\pmb{Q}\|_{H^1_0}\,\|\testphi\|,
    \end{equation*}
     \begin{equation*}
        \label{eq:A_9}
       \lvert A_9^{n+1}\rvert \leq M\|\pmb{H}\|\,\|\testphi\|,
    \end{equation*}
     \begin{equation*}
        \label{eq:A_10}
       \lvert A_{10}^{n+1}\rvert\leq \frac{1}{\Delta t}\|\pmb{H}\|\,\|\testvar\|\leq \frac{1}{\Delta t}\|\pmb{H}\|\,\|\testvar\|_{H^1_0},
    \end{equation*}
    \begin{equation*}
        \label{eq:A_11}
       \lvert A_{11}^{n+1}\rvert \leq\frac{L}{\Delta t}\|\nabla\pmb{Q}\|\,\|\nabla\testvar\|\leq \frac{L}{\Delta t}\|\pmb{Q}\|_{H^1_0}\,\|\testvar\|_{H^1_0}.
    \end{equation*}
    Using the H\"older inequality and the Sobolev inequality, we can estimate $A_2^{n+1}$ as
\begin{equation*}
    \label{eq:A_2}
    \lvert A_2^{n+1}\rvert\leq \|\uold\|_{L^4}\,\|\nabla \pmb{u}\|\,\|\testpsi\|_{L^4}\leq C\|\uold\|_{H^1_0}\,\|\pmb{u}\|_{H^1_0}\,\|\testpsi\|_{H^1_0}\leq C \|\pmb{u}\|_{H^1_0}\,\|\testpsi\|_{H^1_0}.
\end{equation*}
Similar tricks can be applied to control $A_5^k$ and $A_7^k$. Specifically, we have
\begin{equation*}
    \label{eq:A_5}
    \lvert A_5^{n+1}\rvert \leq \|\pmb{H}\|\,\|\nabla \Qold\|_{L^4}\,\|\testpsi\|_{L^4}\leq C\|\pmb{H}\|\,\|\Qold\|_{H^2}\,\|\testpsi\|_{H^1_0}\leq C \|\pmb{H}\|\,\|\testpsi\|_{H^1_0},
\end{equation*}
and
\begin{equation*}
    \label{eq:A_7}
     \lvert A_7^{n+1}\rvert \leq \|\pmb{u}\|_{L^4}\,\|\nabla \Qold\|_{L^4}\,\|\testphi\|\leq C\|\pmb{u}\|_{H^1_0}\,\|\Qold\|_{H^2}\,\|\testphi\|\leq C \|\pmb{u}\|_{H^1_0}\,\|\testphi\|.
\end{equation*}
Thanks to Lemma~\ref{lem:agmon} and Lemma~\ref{lem:P_lipschitz}, we obtain
\begin{equation*}
\label{eq:A_12}
\begin{aligned}
    \lvert A_{12}^{n+1}\rvert \leq \frac{1}{\Delta t}\|\Pold\|^2_{L^\infty}\,\|\pmb{Q}\|\,\|\pmb{\varphi}\|&\leq \frac{\Tilde{L}^2}{\Delta t}\|\Qold\|_{L^\infty}^2\|\pmb{Q}\|_{H^1_0}\,\|\testvar\|_{H^1_0}\\
    &\leq C\|\Qold\|_{H^1_0}\,\|\Qold\|_{H^2}\,\|\pmb{Q}\|_{H^1_0}\,\|\testvar\|_{H^1_0}\leq C \|\pmb{Q}\|_{H^1_0}\,\|\testvar\|_{H^1_0}.
\end{aligned}
\end{equation*}
Recalling definition \eqref{eq:S}, \eqref{eq:numerical_s} and \eqref{eq:sigma} and using Lemma~\ref{lem:agmon}, we can estimate the remaining two terms $A_4^{n+1}, A_8^{n+1}$ as
\begin{equation*}
    \label{eq:A_4}
    \begin{aligned}
        \lvert A_4^{n+1}\rvert&=\left\lvert  \int_\Omega \left[   \Qold\pmb{H}-\pmb{H}\Qold-\xi\left(\pmb{H}\Qold+\Qold\pmb{H} \right)-\frac{2\xi}{d}\,\pmb{H}+2\xi(\Qold :\pmb{H})\Qold \right]:\nabla\testpsi\,dx \right\rvert\\
        &\leq C\,\left( \|\Qold\|_{L^\infty}+\|\Qold\|_{L^\infty}^2+1\right)\,\|\pmb{H}\|\,\|\nabla\testpsi\|\leq C \|\pmb{H}\|\,\|\testpsi\|_{H^1_0},
    \end{aligned}
\end{equation*}
    and
\begin{equation*}
    \label{eq:A_8}
    \begin{aligned}
        \lvert A_8^{n+1}\rvert &=\left\lvert  \int_\Omega \left[   \pmb{W}\Qold-\Qold\pmb{W}+\xi\left(\Qold
\pmb{D}+\pmb{D}\Qold\right)+\frac{2\xi}{d}\pmb{D}-2\xi(\pmb{D}:\Qold)
\left(\Qold+\frac{1}{d}\pmb{I}\right)-\frac{2\xi}{d^2}(\Div \pmb{u})\,\pmb{I} \right]:\testphi\,dx \right\rvert \\
&\leq C\,\left( \|\Qold\|_{L^\infty}+\|\Qold\|_{L^\infty}^2+1\right)\,\|\nabla \pmb{u}\|\,\|\testphi\|\leq C\|\pmb{u}\|_{H^1_0}\,\|\testphi\|.
    \end{aligned}
\end{equation*}
Combining these estimates on $A_i, i=1, 2, \cdots, 12$, we conclude that
\begin{equation}
  \label{eq:bilinear_a_boundness}
\begin{aligned}
     \left \lvert a_{n+1}\left(\begin{pmatrix}
    \pmb{u}, \pmb{Q},  \pmb{H}
\end{pmatrix},\begin{pmatrix}
    \pmb{\psi}, \testvar, \pmb{\phi}\end{pmatrix}\right)\right\rvert&\leq C\left(\|\pmb{u}\|_{H^1_0}+\|\pmb{Q}\|_{H^1_0}+\|\pmb{H}\|\right)\,\left(\|\testpsi\|_{H^1_0}+\|\testvar\|_{H^1_0}+\|\testphi\| \right)\\
    &\leq C\,\left\|\begin{pmatrix}
    \pmb{u}, \pmb{Q},  \pmb{H}
\end{pmatrix}\right\|_{H^1_0(\Omega)\times H^1_0(\Omega)\times L^2(\Omega)}\,\left\|\begin{pmatrix}
    \testpsi, \testvar,  \testphi
\end{pmatrix}\right\|_{H^1_0(\Omega)\times H^1_0(\Omega)\times L^2(\Omega)},
\end{aligned}
\end{equation}
    which completes the proof of the boundedness of thebilinear form $a_{n+1}$.
    
    Next we show the coercivity of $a_{n+1}$. To do so, we choose $\begin{pmatrix}
     \pmb{\psi},   \testvar, \testphi     \end{pmatrix}=\begin{pmatrix}
        \pmb{u}, \pmb{Q}, \pmb{H}     \end{pmatrix}$, it follows from Lemma \ref{lem:properties_B}, Lemma \ref{lem:S_Sigma_cancellation_numerical} that
    
      \begin{equation}
        \label{eq:a_bilinear_coercivity}
        \begin{aligned}
           &\quad\,\, a\left( \begin{pmatrix}
    \pmb{u},
    \pmb{Q}, \pmb{H}
\end{pmatrix},\begin{pmatrix}
    \pmb{u}, 
    \pmb{Q}, \pmb{H}\end{pmatrix}\right)\\
    &=  \frac{1}{\Delta t}\int_\dom \pmb{u}\cdot\pmb{u}\,dx+\Tilde{B}(\uold, \pmb{u}, \pmb{u})+\mu\int_\dom \nabla\pmb{u}\cdot\nabla\pmb{u}\,dx+\int_\dom\Sigma(\Qold, \pmb{H}):\nabla\pmb{u}\,dx+\int_\Omega \pmb{H}\nabla\Qold\cdot\pmb{u}\,dx\\
  &\quad\,\,-\frac{1}{\Delta t}\int_\Omega \pmb{Q}:\pmb{H}\,dx-\int_\Omega (\pmb{u}\cdot\nabla \Qold):\pmb{H}\,dx +\int_\Omega s(\pmb{u},\Qold):\pmb{H}\,dx+M\int_\Omega \pmb{H}:\pmb{H}\,dx\\
  &\quad\,\,+\frac{1}{\Delta t}\int_\Omega \pmb{H}:\pmb{Q}\,dx +\frac{L}{\Delta t}\int_\Omega \nabla\pmb{Q}:\nabla\pmb{Q}\,dx +\frac{1}{\Delta t}\int_\Omega (\Pold:\pmb{Q})^2\,dx\\
  &=\frac{1}{\Delta t}\|\pmb{u}\|^2+\mu\|\nabla\pmb{u}\|^2+M\|\pmb{H}\|^2+\frac{L}{\Delta t}\|\nabla\pmb{Q}\|^2+\frac{1}{\Delta t}\|\Pold:\pmb{Q}\|^2\\
  &\geq C\left(\|\pmb{u}\|_{H^1_0}^2+\|\pmb{Q}\|_{H^1_0}^2+\|\pmb{H}\|^2\right),
  \end{aligned}
    \end{equation}
    for some constant $C>0$ which depends on $\mu, M$, and $\Delta t$. Thus, given $(\uold, \Qold, \Hold,p^n,r^n)\in H^1_0(\Omega)\times\left( H^1_0(\Omega)\cap H^2(\Omega)\right)\times L^2(\Omega)\times L^2(\Omega)\times L^2(\Omega)$, there exists a unique $(\uint, \Qnew, \Hnew)\in H^1_0(\Omega)\times H^1_0(\Omega)\times L^2(\Omega)$ solving~\eqref{eq:uQevolution}. Then standard results about elliptic equations~\cite{evans10} lift the regularity of $\Qnew$ to $H^2(\Omega)$ due to \eqref{eq:Hn+1_rewrite}.
    
    As it is stated in Remark \ref{rem:Step2_Helmholtz_decomposition}, the uniqueness and existence of $\unew$ and $p^{n+1}$ are guaranteed by the Helmholtz decomposition. Using \eqref{eq:scheme_projection} and \eqref{eq:scheme_divergencefree}, for any smooth function $\pmb{\psi}$ with compact support in $[0,T)\times\Omega$, $p^{n+1}$ solves
    \begin{equation}
        \label{eq:pressure_laplace}
        \begin{aligned}
             \int_\Omega \nabla p^{n+1}\cdot \nabla\pmb{\psi}\,dx&=\int_\Omega \nabla p^{n}\cdot \nabla\pmb{\psi}\,dx-\frac{1}{2\Delta t}\int_\Omega(\unew-\uint)\cdot\nabla\pmb{\psi}\\
             &= \int_\Omega\nabla p^n\cdot\nabla\pmb{\psi}-\frac{1}{2\Delta}\int_\Omega(\nabla\cdot\uint)\,\pmb{\psi}\,dx,
        \end{aligned}
    \end{equation}
    which implies that $p^{n+1}\in H^2(\Omega)$ given $p^n\in H^2(\Omega)$. In addition, it follows from \eqref{eq:scheme_projection} that $\unew=\uint-2(\nabla p^{n+1}-\nabla p^n)\Delta t\in H^1(\Omega)$.
    
    We have shown:
\begin{theorem}
    \label{thm:Solvability_scheme}
    Given the initial value $(\pmb{u}^0,\pmb{Q}^0)\in H^1_0(\Omega)\times \left(H^1_0(\Omega)\cap H^2(\Omega)\right)$, the numerical scheme \eqref{eq:uQevolution}-\eqref{eq:pressureupdate} can be solved iteratively with $(\pmb{u}^n,\pmb{Q}^n)\in H^1(\Omega)\times \left(H^1_0(\Omega)\cap H^2(\Omega)\right)$ for every $n\in\mathbb{Z}. $
\end{theorem}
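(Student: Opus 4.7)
The plan is to proceed by induction on $n \geq 0$. The base case $n = 0$ is the given initial data. For the inductive step, suppose $(\uold, \Qold, p^n, r^n, \Hold)$ has been constructed with $\uold \in H^1(\Omega)$, $\Qold \in H^1_0(\Omega) \cap H^2(\Omega)$, $p^n \in H^2(\Omega)$, $r^n \in L^2(\Omega)$, and $\Hold \in L^2(\Omega)$. The goal is to produce the next iterate with the same regularity profile, by handling Steps 1 and 2 separately.

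For Step 1, first eliminate $r^{n+1}$ from the coupled system by substituting~\eqref{eq:Dtr_step1} into~\eqref{eq:Hn+1}, obtaining the reformulated molecular-field equation~\eqref{eq:Hn+1_rewrite} with source $\pmb{F}^n = (\Pold:\Qold)\Pold - r^n \Pold \in L^2(\Omega)$. This turns~\eqref{eq:uQevolution} into a single coupled linear elliptic problem for the triple $(\uint, \Qnew, \Hnew) \in H^1_0(\Omega) \times H^1_0(\Omega) \times L^2(\Omega)$, whose weak formulation is encoded by the bilinear form $a_{n+1}$ in~\eqref{eq:bilinear_form_u_Q_H} and the linear functional $f_n$ in~\eqref{eq:f_n_in_iteration}. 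The boundedness estimate~\eqref{eq:bilinear_a_boundness} and the coercivity estimate~\eqref{eq:a_bilinear_coercivity} established above (via Lemmas~\ref{lem:properties_B}, \ref{lem:S_Sigma_cancellation_numerical}, \ref{lem:agmon}, and \ref{lem:P_lipschitz}) verify the hypotheses of the Lax-Milgram theorem on this product Hilbert space and therefore produce a unique solution triple. The auxiliary variable $r^{n+1} \in L^2(\Omega)$ is then defined pointwise by~\eqref{eq:Dtr_step1}.

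Next, bootstrap $\Qnew$ from $H^1_0$ to $H^2$. Rewriting~\eqref{eq:Hn+1_rewrite} as the Poisson-type equation $-L\Delta \Qnew = \Hnew - (\Pold:\Qnew)\Pold + \pmb{F}^n$ with homogeneous Dirichlet data, observe that the right-hand side lies in $L^2(\Omega)$, because $\Pold \in L^\infty(\Omega)$ follows from $\Qold \in H^2$ via Lemma~\ref{lem:agmon} combined with the Lipschitz bound on $P$ given by Lemma~\ref{lem:P_lipschitz}. An application of Lemma~\ref{lem:laplace_estimate} then yields $\Qnew \in H^1_0(\Omega) \cap H^2(\Omega)$.

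For Step 2, apply the Helmholtz-Hodge decomposition to $\uint$: set $\unew = \mathcal{P}\uint \in H^1(\Omega)$ and use~\eqref{eq:pressure_laplace} to see that $p^{n+1} - p^n$ solves a Poisson equation with right-hand side $-\frac{1}{2\Delta t}\,\Div \uint \in L^2(\Omega)$ and the natural Neumann boundary condition inherited from $\unew \cdot \pmb{n}|_{\partial\Omega} = 0$. Standard elliptic regularity then gives $p^{n+1} \in H^2(\Omega)$, and~\eqref{eq:scheme_projection} delivers $\unew \in H^1(\Omega)$, closing the induction. The main technical point is the $L^\infty$ bound on $\Pold$ --- it is what permits the otherwise nonlinear term $(\Pold:\pmb{Q})\Pold$ to be treated as a bounded perturbation within the Lax-Milgram framework, and it is consistently inherited across iterations from the $H^2$-regularity of $\Qold$ via Agmon's inequality.
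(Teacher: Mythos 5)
Your proposal follows essentially the same route as the paper: eliminate $r^{n+1}$ via~\eqref{eq:Dtr_step1} to arrive at~\eqref{eq:Hn+1_rewrite}, apply Lax--Milgram on $H^1_0\times H^1_0\times L^2$ using the boundedness~\eqref{eq:bilinear_a_boundness} and coercivity~\eqref{eq:a_bilinear_coercivity} of $a_{n+1}$ (with Agmon's inequality and Lipschitz continuity of $P$ controlling the $(\Pold:\pmb{Q})\Pold$ term), bootstrap $\Qnew$ to $H^2$ by elliptic regularity, and handle Step 2 via the Helmholtz decomposition and the Poisson problem~\eqref{eq:pressure_laplace} for the pressure. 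The only thing worth flagging is that the coercivity argument invokes Lemma~\ref{lem:S_Sigma_cancellation_numerical}, which requires $\Qold$ to be trace-free and symmetric; this is guaranteed by Lemma~\ref{lem:Q-tensor_symmetry_trace-free} and should be carried along explicitly as part of the inductive hypothesis, though this is a minor omission and the overall structure is sound.
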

\subsection{Energy stability}
\begin{lemma}
    \label{lem:basic_discrete_energy_law}
    The numerical scheme \eqref{eq:uQevolution}-\eqref{eq:pressureupdate} is unconditionally energy stable and satisfies the semi-discrete energy dissipation law
    \begin{equation}
        \label{eq:basic_semi-discrete_energy_law}
        \begin{aligned}
            E^{N+1}&+\frac{1}{4}\sum_{n=0}^{N-1}\|\unew-\uint\|^2
     +\frac{1}{2}\sum_{n=0}^N\|\uint-\uold\|^2+\frac{L}{2}\sum_{n=0}^N\|\nabla \Qnew-\nabla\Qold\|^2\\
     &\quad\quad\quad\quad+\frac{1}{2}\sum_{n=0}^N\|r^{n+1}-r^n\|^2
            +\mu \sum_{n=0}^N\|\nabla \uint\|^2\,\Delta t+M\sum_{n=0}^N\|\Hnew\|^2\,\Delta t =E^0,
        \end{aligned}
    \end{equation}
    for all integers $N\in\left[0,\floor{\frac{T}{\Delta t}}\right]$ where
    \begin{equation*}
        \label{eq:discrete_energy_N}
        \begin{aligned}
             E^{N+1}&=\frac{1}{2}\|\Tilde{\pmb{u}}^{N+1}\|^2+\frac{1}{2}\|\uN\|^2
             +\frac{L}{2}\|\nabla \QN\|^2+\frac{1}{2}\|r^{N+1}\|^2+\|\nabla p^{N+1}\|^2\Delta t^2.
        \end{aligned}
    \end{equation*}
\end{lemma}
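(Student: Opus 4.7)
The plan is to derive \eqref{eq:basic_semi-discrete_energy_law} by testing each equation of Step~1 against a judiciously chosen test function, invoking the two cancellation lemmas to eliminate the nonlinear coupling terms, and then closing the residual pressure contribution via Step~2.

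First, I would test the momentum equation \eqref{eq:Dtu_step1} with $\pmb{\psi} = \tilde{\pmb{u}}^{n+1}$, the $\pmb{Q}$-equation \eqref{eq:DtQ_step1} with $\pmb{\varphi} = \pmb{H}^{n+1}$, and the $\pmb{H}$-equation \eqref{eq:Hn+1} with $\pmb{\phi} = (\pmb{Q}^{n+1} - \pmb{Q}^n)/\Delta t$. The convective trilinear form $\tilde{B}(\pmb{u}^n,\tilde{\pmb{u}}^{n+1},\tilde{\pmb{u}}^{n+1})$ vanishes by Lemma~\ref{lem:properties_B}, and repeated use of the polarization identity $\langle a-b,a\rangle = \tfrac{1}{2}(\|a\|^2 - \|b\|^2 + \|a-b\|^2)$ turns the discrete time-derivatives into telescoping increments plus squared-difference numerical dissipation. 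The scalar update \eqref{eq:Dtr_step1} is pivotal: it permits rewriting $\langle r^{n+1}\pmb{P}^n,\pmb{Q}^{n+1}-\pmb{Q}^n\rangle = \langle r^{n+1}, r^{n+1}-r^n\rangle$, which polarizes into the telescoping $\tfrac{1}{2}\|r^{n+1}\|^2$ energy and the $\tfrac{1}{2}\|r^{n+1}-r^n\|^2$ dissipation. This replaces the bulk-potential contribution in a way that fully respects the IEQ reformulation.

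Next, I would add the three resulting identities. The key cancellations are $\langle \pmb{\Sigma}^{n+1}, \nabla \tilde{\pmb{u}}^{n+1}\rangle + \langle \pmb{s}^{n+1}, \pmb{H}^{n+1}\rangle = 0$, supplied by Lemma~\ref{lem:S_Sigma_cancellation_numerical} (whose hypotheses are met since Lemma~\ref{lem:Q-tensor_symmetry_trace-free} preserves the trace-freeness and symmetry of $\pmb{Q}^n$ and $\pmb{H}^{n+1}$ throughout the iteration), and the pointwise identity $\langle \pmb{H}^{n+1}\nabla\pmb{Q}^n, \tilde{\pmb{u}}^{n+1}\rangle = \langle \tilde{\pmb{u}}^{n+1}\cdot\nabla\pmb{Q}^n, \pmb{H}^{n+1}\rangle$ obtained by relabelling indices, which kills the two opposite-signed coupling integrals. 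What survives is the per-step telescoping increment of $\tfrac{1}{2}\|\tilde{\pmb{u}}^{n+1}\|^2 + \tfrac{L}{2}\|\nabla \pmb{Q}^{n+1}\|^2 + \tfrac{1}{2}\|r^{n+1}\|^2$, the physical dissipations $\mu\|\nabla \tilde{\pmb{u}}^{n+1}\|^2 + M\|\pmb{H}^{n+1}\|^2$, the numerical dissipation terms, and a single uncontrolled term $-\langle \nabla p^n, \tilde{\pmb{u}}^{n+1}\rangle$.

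Finally, I would close this pressure term using \eqref{eq:pressureupdate}. Testing \eqref{eq:scheme_projection} against $\pmb{u}^{n+1}$ and invoking \eqref{eq:scheme_divergencefree} yields $\langle \pmb{u}^{n+1}, \tilde{\pmb{u}}^{n+1}\rangle = \|\pmb{u}^{n+1}\|^2$, from which $\|\tilde{\pmb{u}}^{n+1}\|^2 = \|\pmb{u}^{n+1}\|^2 + \|\pmb{u}^{n+1}-\tilde{\pmb{u}}^{n+1}\|^2$ and $\|\pmb{u}^{n+1}-\tilde{\pmb{u}}^{n+1}\|^2 = 4\Delta t^2\|\nabla(p^{n+1}-p^n)\|^2$. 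Substituting $\tilde{\pmb{u}}^{n+1} = \pmb{u}^{n+1} + 2\Delta t\,\nabla(p^{n+1}-p^n)$, using $\langle \nabla p^n, \pmb{u}^{n+1}\rangle = 0$, and polarizing $\langle \nabla p^n, \nabla(p^{n+1}-p^n)\rangle$ recasts $-\langle \nabla p^n, \tilde{\pmb{u}}^{n+1}\rangle$ as $-\Delta t(\|\nabla p^{n+1}\|^2 - \|\nabla p^n\|^2) + \tfrac{1}{4\Delta t}\|\pmb{u}^{n+1}-\tilde{\pmb{u}}^{n+1}\|^2$. Multiplying by $\Delta t$ and summing from $n=0$ to $N$ collapses the telescopes into \eqref{eq:basic_semi-discrete_energy_law}. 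The main obstacle is purely bookkeeping: tracking signs and coefficients across roughly a dozen terms, and in particular regrouping the boundary-index $n=N$ portion of the projection residual with $\tfrac{1}{2}\|\pmb{u}^{N+1}\|^2$ to recover the displayed form of $E^{N+1}$. The factor $2$ appearing in \eqref{eq:scheme_projection} is calibrated precisely so that exactly $\tfrac{1}{4}\|\pmb{u}^{n+1}-\tilde{\pmb{u}}^{n+1}\|^2$ survives as numerical dissipation after the pressure telescope, rather than being absorbed entirely into the $\|\nabla p\|^2$ increments.
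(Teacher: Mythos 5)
Your proposal is correct and follows essentially the same route as the paper: test the momentum equation with $\tilde{\pmb{u}}^{n+1}$, the $\pmb{Q}$-equation with (a multiple of) $\pmb{H}^{n+1}$, the $\pmb{H}$-equation with (a multiple of) $\pmb{Q}^{n+1}-\pmb{Q}^n$, use \eqref{eq:Dtr_step1} to convert $\langle r^{n+1}\pmb{P}^n,\pmb{Q}^{n+1}-\pmb{Q}^n\rangle$ into a telescoping $r$-energy, invoke Lemma~\ref{lem:properties_B}, Lemma~\ref{lem:S_Sigma_cancellation_numerical} and the $\langle\pmb{H}\nabla\pmb{Q},\pmb{u}\rangle=\langle\pmb{u}\cdot\nabla\pmb{Q},\pmb{H}\rangle$ identity to kill the coupling terms, and then close via the projection step (your direct substitution $\tilde{\pmb{u}}^{n+1}=\pmb{u}^{n+1}+2\Delta t\,\nabla(p^{n+1}-p^n)$ is algebraically the same as the paper's pair of test functions $\tfrac{1}{2}\pmb{u}^{n+1}\Delta t$ and $\tfrac{1}{4}(\pmb{u}^{n+1}+\tilde{\pmb{u}}^{n+1})\Delta t$). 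As a side note, your bookkeeping correctly delivers coefficients $\tfrac{1}{4}$ in front of $\|\tilde{\pmb{u}}^{N+1}\|^2$ and $\|\pmb{u}^{N+1}\|^2$ (matching the final display \eqref{eq:energy_N_2} in the paper's proof); the factor $\tfrac{1}{2}$ appearing in the Lemma's stated definition of $E^{N+1}$ is a typo.
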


\begin{proof}
According to Theorem \ref{thm:Solvability_scheme}, $\uint\in H^1_0(\Omega)$ for every $n\geq0$. It allows us to choose $\testvar=\uint\Delta t$ as a test function in \eqref{eq:Dtu_step1} to get
\begin{equation*}
    \label{eq:energy_u}
    \begin{aligned}
    \frac{1}{2}&\left( \|\uint\|^2-\| \uold\|^2+\|\uint-\uold\|^2 \right)\\
    &=-\langle \nabla p^n, \uint\rangle\Delta t-\mu\|\nabla\uint\|^2\Delta t-\langle \sigmanew, \nabla\uint\rangle\Delta t-\langle \Hnew\nabla \Qold, \uint\rangle\Delta t,
    \end{aligned}
\end{equation*}
where we have used the fact that $\Tilde{B}(\uold, \uint, \uint)=0$ by Lemma \ref{lem:properties_B}. 
Taking $-\Delta t \Hnew$ as a test function in \eqref{eq:DtQ_step1} and $\Qnew-\Qold$ in \eqref{eq:Hn+1}, adding the two equations, and using \eqref{eq:Dtr_step1},  we have
\begin{equation*}
\label{eq:energy_Q}
    \begin{aligned}
    \frac{L}{2}\big(\|\nabla \Qnew\|^2&-\|\nabla \Qold\|^2+\|\nabla \Qnew-\nabla \Qold\|^2\big) + \frac{1}{2}\left(\|\rint\|^2-\|r^n\|^2+\|\rint-r^n\|^2\right)\\
    &=-M\|\Hnew\|^2\Delta t+\langle \uint\cdot\nabla \Qold, \Hnew\rangle\Delta t -\langle \Snew, \Hnew\rangle\Delta t.
    \end{aligned}
\end{equation*}
Lemma \ref{lem:S_Sigma_cancellation_numerical} implies that
\begin{equation*}
    \label{eq:Sigma_S_cancel}
    \langle \nabla\uint, \,\sigmanew\rangle+\langle \Hnew, \Snew\rangle=0
\end{equation*}
Taking the inner product of \eqref{eq:scheme_projection} with $\frac{1}{2}\,\unew\Delta t$ and $\frac{1}{4}\,(\unew+\uint)\,\Delta t$ respectively, and using the divergence free condition \eqref{eq:scheme_divergencefree} for $\unew$, we have
\begin{subequations}
\label{eq:energy_projection}
\begin{equation}
    \label{eq:energy_projection1}
    \frac{1}{4}\|\unew\|^2-\frac{1}{4}\|\uint\|^2+\frac{1}{4}\|\unew-\uint\|^2=0,
\end{equation}
\begin{equation}
    \label{eq:energy_projection2}
    \frac{1}{4}\|\unew\|^2-\frac{1}{4}\|\uint\|^2=-\frac{1}{2}\,\langle \nabla p^{n+1}-\nabla p^n, \uint\rangle\,\Delta t.
\end{equation}
\end{subequations}
Adding up these estimates together, we obtain
\begin{equation*}
    \label{eq:energy_induction}
    \begin{aligned}
   &\quad\,\,\left( \frac{1}{2}\|\unew\|^2+\frac{L}{2}\|\nabla \Qnew\|^2+\frac{1}{2}\|r^{n+1}\|^2\right)- \left( \frac{1}{2}\|u^{n}\|^2+\frac{L}{2}\|\nabla Q^{n}\|^2+\frac{1}{2}\|r^{n}\|^2\right)\\
   &\quad\quad\quad+\frac{1}{4}\|\unew-\uint\|^2
   +\frac{1}{2}\|\uint-\uold\|^2+\frac{L}{2}\|\nabla \Qnew-\nabla \Qold\|^2+\frac{1}{2}\|r^{n+1}-r^n\|^2\\
   &=-\frac{1}{2}\langle \nabla p^{n+1}+\nabla p^n, \uint\rangle\,\Delta t
   - \mu\|\nabla \uint\|^2\Delta t-M\|\Hnew\|^2\Delta t\\
   &=-\frac{1}{2}\langle \nabla p^{n+1}+\nabla p^n,\, 2\left(\nabla p^{n+1}-\nabla p^n\right)\,\Delta t \rangle\,\Delta t
   -\mu\|\nabla \uint\|^2\Delta t-M\|\Hnew\|^2\Delta t\\
   &=-\left(\|\nabla p^{n+1}\|^2-\|\nabla p^n\|^2 \right)\,\Delta t^2
   -\mu\|\nabla \uint\|^2\Delta t-M\|\Hnew\|^2\Delta t.
    \end{aligned}
\end{equation*}
Summing up it from $n=0$ to $N$, we yield
\begin{equation*}
    \label{eq:energy_N}
    \begin{aligned}
       &\frac{1}{2}\|\uN\|^2+\frac{L}{2}\|\nabla \QN\|^2+\frac{1}{2}\|r^{n+1}\|^2+\|\nabla p^{N+1}\|^2\Delta t^2\\
       &\quad+\frac{1}{4}\sum_{n=0}^N\|\unew-\uint\|^2
     + \frac{1}{2}\sum_{n=0}^N\|\uint-\uold\|^2+\frac{L}{2}\sum_{n=0}^N\|\nabla\Qnew-\nabla\Qold\|^2+\frac{1}{2}\sum_{n=0}^N\|r^{n+1}-r^n\|^2\\
       &\quad\quad+\mu\,\sum_{n=0}^N\|\nabla \uint\|^2\,\Delta t+M\sum_{n=0}^N\|\Hnew\|^2\,\Delta t
       =\frac{1}{2}\|\pmb{u}^0\|^2+\frac{L}{2}\|\nabla \pmb{Q}^0\|^2+\frac{1}{2}\|r^{0}\|^2+\|\nabla p^{0}\|^2\Delta t^2.
    \end{aligned}
\end{equation*}
Using~\eqref{eq:energy_projection1} once more, we can also rewrite this equation as
%
%
\begin{equation}
\label{eq:energy_N_2}
\begin{aligned}
   &\quad\frac{1}{4}\|\Tilde{\pmb{u}}^{N+1}\|^2+\frac{1}{4}\|\uN\|^2+\frac{L}{2}\|\nabla \QN\|^2+\frac{1}{2}\|r^{n+1}\|^2+\|\nabla p^{N+1}\|^2\Delta t^2
   \\
   &\quad\quad+\frac{1}{4}\sum_{n=0}^{N-1}\|\unew-\uint\|^2
   +\frac{1}{2}\sum_{n=0}^N\|\uint-\uold\|^2+\frac{L}{2}\sum_{n=0}^N\|\nabla \Qnew-\nabla\Qold\|^2\\
   &\quad\quad+\frac{1}{2}\sum_{n=0}^N\|r^{n+1}-r^n\|^2+\mu\,\sum_{n=0}^N\|\nabla \uint\|^2\,\Delta t+M\sum_{n=0}^N\|\Hnew\|^2\,\Delta t\\
   &\quad\quad=\frac{1}{2}\|\pmb{u}^0\|^2+\frac{L}{2}\|\nabla \pmb{Q}^0\|^2+\frac{1}{2}\|r^{0}\|^2+\|\nabla p^{0}\|^2\Delta t^2.
\end{aligned}
\end{equation}
This concludes the proof of the discrete energy law of the system. 
\end{proof}

Next, we define piece-wise linear in time interpolations based on the approximants 
$(\uold, \Qold, p^n, r^n)$, $1\leq n\leq \floor{\frac{T}{\Delta t}}$. Specifically, given $\Delta t>0$, we define  $(\usol, \uappr, \Qsol, \rsol)$ as piece-wise linear interpolation of $\uold, \uint, \Qold, r^n$, that is,
\begin{subequations}
\label{eq:numerical_interpolation_solutions}
\begin{equation}
    \label{eq:usol}
    \usol(t) = \sum_{n=0}^{N-1}  \left[\frac{(n+1)\Delta t-t}{\Delta t}\,\uold+\frac{t-n\Delta t}{\Delta t}\,\unew\right]\,\chi_{S_n},
\end{equation}
\begin{equation}
    \label{eq:uappr}
    \uappr(t) = \sum_{n=0}^{N-1}  \left[\frac{(n+1)\Delta t-t}{\Delta t}\,\Tilde{\pmb{u}}^{n}+\frac{t-n\Delta t}{\Delta t}\,\Tilde{\pmb{u}}^{n+1}\right]\,\chi_{S_n},
\end{equation}
\begin{equation}
    \label{eq:Qsol}
    \Qsol(t) = \sum_{n=0}^{N-1}  \left[\frac{(n+1)\Delta t-t}{\Delta t}\,\Qold+\frac{t-n\Delta t}{\Delta t}\,\Qnew\right]\,\chi_{S_n},
\end{equation}
\begin{equation}
    \label{eq:Qbacksol}
    \Qbacksol(t) = \sum_{n=0}^{N-1}  \left[\frac{(n+1)\Delta t-t}{\Delta t}\,\pmb{Q}^{n-1}+\frac{t-n\Delta t}{\Delta t}\,\Qold\right]\,\chi_{S_n},
\end{equation}
\begin{equation}
    \label{eq:Psol}
    \Psol(t) = \sum_{n=0}^{N-1}  \left[\frac{(n+1)\Delta t-t}{\Delta t}\,\pmb{P}^{n-1}+\frac{t-n\Delta t}{\Delta t}\,\Pold\right]\,\chi_{S_n},
\end{equation}
\begin{equation}
    \label{eq:Hsol}
    \Hsol(t) = \sum_{n=0}^{N-1}  \left[\frac{(n+1)\Delta t-t}{\Delta t}\,\Hold+\frac{t-n\Delta t}{\Delta t}\,\Hnew\right]\,\chi_{S_n},
\end{equation}
\begin{equation}
    \label{eq:rsol}
    \rsol(t) = \sum_{n=0}^{N-1}  \left[\frac{(n+1)\Delta t-t}{\Delta t}\,r^n+\frac{t-n\Delta t}{\Delta t}\,r^{n+1} \right]\,\chi_{S_n},
\end{equation}
\end{subequations}
where $S_n=[n\Delta t, (n+1)\Delta t)$ and $\chi_{S_n}$ is the characteristic function on $S_n$. Our goal is to use the Aubin-Lions lemma, Lemma~\ref{lem:Aubin-Lions} to deduce pre-compactness of these interpolants. To be able to do so, we need to derive uniform (in $\Delta t$) estimates on their time derivatives. We summarize these estimates for regularity in time in the following two lemmas. The first one states the regularity for time derivative of velocity field $\usol$.
\begin{lemma}
\label{lem:u_H^-1_estimate} 
Let $V=H^2(\Omega)\cap H^1_{0,\sigma}(\Omega)$. For every $\Delta t>0$, we have
\begin{equation*}
    \partial_t \usol\in L^{2}(0,T;V').
\end{equation*}
\end{lemma}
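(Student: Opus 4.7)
\textbf{Proof plan for Lemma \ref{lem:u_H^-1_estimate}.} The plan is to show that $\partial_t \usol$ is uniformly bounded in $L^2(0,T;V')$ by pairing it against an arbitrary test function $\testpsi \in V$ with $\|\testpsi\|_V \le 1$ and exploiting the two facts that (i) the second sub-step of the scheme \eqref{eq:pressureupdate} produces a pure gradient, which annihilates divergence-free test functions, and (ii) Lemma \ref{lem:basic_discrete_energy_law} furnishes every norm that will appear. First I would write, for $t \in S_n$,
\begin{equation*}
\partial_t \usol(t) = \frac{\unew-\uold}{\Delta t} = \frac{\unew-\uint}{\Delta t} + \frac{\uint-\uold}{\Delta t}.
\end{equation*}
From \eqref{eq:scheme_projection} the first difference equals $-2(\nabla p^{n+1}-\nabla p^n)$, so that for any $\testpsi \in V$ (in particular $\nabla\cdot\testpsi = 0$ with $\testpsi|_{\partial\dom}=0$) one has $\langle (\unew-\uint)/\Delta t,\testpsi\rangle = 0$. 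The second difference is handled via \eqref{eq:Dtu_step1}, where the $\langle \nabla p^n,\testpsi\rangle$ term likewise vanishes, leaving
\begin{equation*}
\left\langle \partial_t\usol(t),\testpsi\right\rangle
= -\tilde B(\uold,\uint,\testpsi) - \mu\langle \nabla\uint,\nabla\testpsi\rangle - \langle \sigmanew,\nabla\testpsi\rangle - \langle \Hnew\nabla\Qold,\testpsi\rangle.
\end{equation*}

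Next I would estimate each of these four pieces, uniformly over $\|\testpsi\|_V\le 1$. Throughout I will use the Sobolev embeddings $V \hookrightarrow L^\infty(\dom)$ and $V \hookrightarrow W^{1,6}(\dom)$ (valid for $d\in\{2,3\}$), together with the divergence-free property of $\uold$ (which is $\unew$ from the previous step, and divergence free for $n=0$ by assumption). Concretely:
\begin{itemize}
\item Integration by parts gives $|\tilde B(\uold,\uint,\testpsi)| = |\langle (\uold\cdot\nabla)\uint,\testpsi\rangle| \le \|\uold\|\,\|\nabla\uint\|\,\|\testpsi\|_{L^\infty} \le C\|\uold\|\,\|\nabla\uint\|\,\|\testpsi\|_V$.
\item The viscous term satisfies $\mu|\langle\nabla\uint,\nabla\testpsi\rangle| \le \mu\|\nabla\uint\|\,\|\testpsi\|_V$.
\item For the elastic stress, using $|\sigmanew| \le C|\Hnew|(1+|\Qold|^2)$ and H\"older with exponents $(2,3,6)$ yields $|\langle\sigmanew,\nabla\testpsi\rangle| \le C\|\Hnew\|(1+\|\Qold\|_{L^6}^2)\|\nabla\testpsi\|_{L^6} \le C\|\Hnew\|(1+\|\Qold\|_{H^1}^2)\|\testpsi\|_V$.
\item Finally $|\langle \Hnew\nabla\Qold,\testpsi\rangle| \le \|\Hnew\|\,\|\nabla\Qold\|\,\|\testpsi\|_{L^\infty} \le C\|\Hnew\|\,\|\Qold\|_{H^1}\|\testpsi\|_V$.
\end{itemize}
Taking the supremum over $\testpsi$ and then squaring, I would use that Lemma \ref{lem:basic_discrete_energy_law} yields $\|\uold\|,\|\Qold\|_{H^1}\in\ell^\infty_n$ (uniformly in $\Delta t$) while $\|\nabla\uint\|$ and $\|\Hnew\|$ belong to $\ell^2_n$ with weight $\Delta t$. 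Summing over $n$ and converting the Riemann sum back into a time integral gives $\int_0^T\|\partial_t\usol\|_{V'}^2\,dt \le C$ independent of $\Delta t$.

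The main obstacle is the nonlinear stress term $\sigmanew$ and the Ericksen term $\Hnew\nabla\Qold$: one cannot afford $L^\infty$-in-time control on $\Hnew$ or $\nabla\Qold$ separately, so one must carefully pair their $L^2$-in-time information with $L^\infty$-in-time $H^1$ control on $\Qold$, while pushing all the remaining derivatives onto $\testpsi$ through the Sobolev embedding $V\hookrightarrow W^{1,6}\cap L^\infty$. Everything else reduces to bookkeeping once this is done.
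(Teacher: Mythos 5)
Your proposal is correct and follows essentially the same route as the paper: reduce $\partial_t\usol$ to $(\uint-\uold)/\Delta t$ by using the projection step and the divergence-free test functions, then bound the convection, viscosity, elastic-stress, and Ericksen terms using the discrete energy law together with the Sobolev embeddings $V\hookrightarrow L^\infty\cap W^{1,6}$, and finally square and sum the Riemann pieces. The only cosmetic difference is that you derive a pointwise-in-$t$ bound on $\|\partial_t\usol(t)\|_{V'}$ and integrate, whereas the paper estimates $\int_0^T\langle\partial_t\usol,\pmb{\phi}\rangle\,dt$ directly with Cauchy--Schwarz in time; the ingredients and the resulting estimate are the same.
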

\begin{proof}
From \eqref{eq:Dtu_step1}, we infer that for any $\pmb{\phi}\in L^{2}(0,T;V)  $,
\begin{equation*}
\begin{aligned}
    \left\langle \frac{\uint-\uold}{\Delta t}, \pmb{\phi}(\cdot, t)\right\rangle=-\langle B(\uold, \uint), \pmb{\phi}(\cdot, t)\rangle&- \langle \mu \nabla \uint, \nabla\pmb{\phi}(\cdot, t)\rangle\\
    &-\langle \sigmanew, \nabla\pmb{\phi}(\cdot, t)\rangle-\langle \Hnew\nabla \Qold, \pmb{\phi}(\cdot, t)\rangle\coloneqq\sum_{k=1}^4 I_k^n.
\end{aligned}
    \label{eq:Dtu_H^-1}
\end{equation*}
To derive the regularity estimate, we will control $I_1^n$ to $I_4^n$ separately. Using integration by parts and the energy estimate \eqref{eq:energy_N_2},
we obtain
\begin{align*}
   \left|\sum_{n=0}^{N-1} \int_{n\Delta t}^{(n+1)\Delta t} I_1^n\,dt\right|&=\left\lvert\sum_{n=0}^{N-1} \int_{n\Delta t}^{(n+1)\Delta t}\int_\Omega(\uold\cdot\nabla)\uint\cdot\pmb{\phi}\,dx dt\right\rvert \\
       &\leq \sum_{n=0}^{N-1} \int_{n\Delta t}^{(n+1)\Delta t}\|\pmb{\phi}(\cdot, t)\|_{L^\infty}\|\uold\|\|\nabla\uint\|\,dt \\
     &\stackrel{\eqref{eq:energy_N_2}}{\leq} C \sum_{n=0}^{N-1} \int_{n\Delta t}^{(n+1)\Delta t}\|\pmb{\phi}(\cdot, t)\|_{L^\infty}\|\nabla\uint\|\,dt\\
     &\leq C \sum_{n=0}^{N-1} \int_{n\Delta t}^{(n+1)\Delta t}\|\pmb{\phi}(\cdot, t)\|_{H^2}\|\nabla\uint\|\,dt\\
     &\leq C\sum_{n=0}^{N-1}\left( \int_{n\Delta t}^{(n+1)\Delta t}\|\nabla\uint\|^2\,dt \right)^{\frac{1}{2}}\,\left( \int_{n\Delta t}^{(n+1)\Delta t}\|\pmb{\phi}(\cdot,t)\|_{H^2}^2\,dt \right)^{\frac{1}{2}}\\
     &\leq C\,\left( \sum_{n=0}^{N-1} \int_{n\Delta t}^{(n+1)\Delta t}\|\nabla\uint\|^2\,dt \right)^{\frac{1}{2}}\,\left( \sum_{n=0}^{N-1} \int_{n\Delta t}^{(n+1)\Delta t}\|\pmb{\phi}(\cdot,t)\|_{H^2}^2\,dt \right)^{\frac{1}{2}}\\
     &=C \left(\sum_{n=0}^{N-1}\|\nabla\uint\|^2\,\Delta t \right)^{\frac{1}{2}}\,\|\pmb{\phi}\|_{L^2(0,T;H^2)}
      \stackrel{\eqref{eq:energy_N_2}}{\leq} C\|\pmb{\phi}\|_{L^2(0,T;V)}.
       \end{align*}
$I_2$ can be estimated as
\begin{align*}
      \left| \sum_{n=0}^{N-1} \int_{n\Delta t}^{(n+1)\Delta t} I_2^n\,dt\right|&=\left\lvert\sum_{n=0}^{N-1} \int_{n\Delta t}^{(n+1)\Delta t}\int_\Omega\mu\nabla\uint\cdot\nabla\pmb{\phi}\,dxdt\right\rvert\\
        &\leq C \left( \sum_{n=0}^{N-1}\|\nabla \uint\|^2\,\Delta t \right)^{\frac{1}{2}}  \|\nabla \pmb{\phi}\|_{L^2([0,T)\times\Omega)}\stackrel{\eqref{eq:energy_N_2}}{\leq} C\|\pmb{\phi}\|_{L^2(0,T;V)}
    \end{align*}
By Definitions \eqref{eq:sigma} and \eqref{eq:SnSigman}, H\"{o}lder's inequality, Poincar\'{e}'s inequality and the Sobolev inequality, we have
\begin{align*}
         &    \left|\sum_{n=0}^{N-1} \int_{n\Delta t}^{(n+1)\Delta t} I_3^n\,dt\right|\\
  &\leq\sum_{n=0}^{N-1}\bigg\lvert\int_{n\Delta t}^{(n+1)\Delta t}\bigg\langle \left(\Qold\Hnew-\Hnew\Qold\right)-\xi\left(\Hnew\Qold+\Qold\Hnew \right)\\
    &\hphantom{\leq\sum_{n=0}^{N-1}\bigg\lvert\int_{n\Delta t}^{(n+1)\Delta t}\bigg\langle \left(\Qold\Hnew-\Hnew\Qold\right)}-\frac{2\xi}{d}\Hnew+2\xi(\Qold:\Hnew)\Qold, \nabla \pmb{\phi}(\cdot, t)\bigg\rangle\,dt\bigg\rvert\\
    &\leq C\sum_{n=0}^{N-1}\int_{n\Delta t}^{(n+1)\Delta t}\bigg( \|\Qold\|_{L^3}\,\|\Hnew\|\,\|\nabla\pmb{\phi}(\cdot,t)\|_{L^6}\\
    &\hphantom{\leq C\sum_{n=0}^{N-1}\int_{n\Delta t}^{(n+1)\Delta t}\bigg( \|\Qold\|_{L^3}\,\|\Hnew\|}+\|\Hnew\|\,\|\nabla\pmb{\phi}(\cdot,t)\|+\|\Hnew\|\,\|\Qold\|_{L^6}^2\,\|\nabla\pmb{\phi}(\cdot,t)\|_{L^6}\bigg)\,dt  \\
    &\leq  C\sum_{n=0}^{N-1}\int_{n\Delta t}^{(n+1)\Delta t}\bigg( \|\Qold\|_{H^1}\,\|\Hnew\|\,\|\pmb{\phi}(\cdot,t)\|_{H^2}\\
    &\hphantom{\leq  C\sum_{n=0}^{N-1}\int_{n\Delta t}^{(n+1)\Delta t}\bigg( \|\Qold\|_{H^1}\,\|\Hnew\|}+\|\Hnew\|\,\|\pmb{\phi}(\cdot,t)\|_{H^1}+\|\Hnew\|\,\|\Qold\|_{H^1}^2\,\|\pmb{\phi}(\cdot,t)\|_{H^2}\bigg)\,dt \\
    &\stackrel{\eqref{eq:energy_N_2}}{\leq} C\sum_{n=0}^{N-1}\int_{n\Delta t}^{(n+1)\Delta t}\bigg( \|\Hnew\|\,\|\pmb{\phi}(\cdot,t)\|_{H^2}+\|\Hnew\|\,\|\pmb{\phi}(\cdot,t)\|_{H^1}+\|\Hnew\|\,\|\pmb{\phi}(\cdot,t)\|_{H^2}\bigg)\,dt \\
         &\leq C \left( \sum_{n=0}^{N-1}\| \Hnew\|^2\,\Delta t \right)^{\frac{1}{2}}  \| \pmb{\phi}\|_{L^2(0,T; H^2)}\stackrel{\eqref{eq:energy_N_2}}{\leq} C\|\pmb{\phi}\|_{L^2(0,T;V)}
\end{align*}
To control $I_4$, we apply Lemma \ref{lem:agmon},
\begin{align*}
        \left|\sum_{n=0}^{N-1} \int_{n\Delta t}^{(n+1)\Delta t} I_4^n\,dt\right|&=\left\lvert\sum_{n=0}^{N-1}\int_{n\Delta t}^{(n+1)\Delta t}\int_\Omega \Hnew\nabla\Qold\cdot\pmb{\phi}\,dxdt\right\rvert\\
        &\leq \sum_{n=0}^{N-1}\int_{n\Delta t}^{(n+1)\Delta t}\|\pmb{\phi}(\cdot, t)\|_{L^\infty}\|\Hnew\|\,\|\nabla \Qold\|\,dxdt\\
        &\stackrel{\eqref{eq:energy_N_2}}{\leq}C \sum_{n=0}^{N-1}\int_{n\Delta t}^{(n+1)\Delta t}\|\pmb{\phi}(\cdot, t)\|_{L^\infty}\|\Hnew\|\,dxdt\\
        &\leq C \sum_{n=0}^{N-1}\int_{n\Delta t}^{(n+1)\Delta t}\|\pmb{\phi}(\cdot, t)\|_{H^2}\|\Hnew\|\,dxdt\\
        &\leq C\left( \sum_{n=0}^{N-1}\| \Hnew\|^2\,\Delta t \right)^{\frac{1}{2}}\left(\int_0^T\|\pmb{\phi}(\cdot,t)\|_{H^2}^2\,dt\right)^{\frac{1}{2}}\stackrel{\eqref{eq:energy_N_2}}{\leq} C\|\pmb{\phi}\|_{L^{2}(0,T;V)}
\end{align*}
According to scheme \eqref{eq:pressureupdate}, for each $n$, $\unew$ is the projection of $\uint$ onto the space of divergence free functions. Therefore, for every $\testphi\in L^2(0,T;V)$, we have
\begin{equation*}
\label{eq:Dtu_projection_estimate}
\int_{n\Delta t}^{(n+1)\Delta t}\left\langle \partial_t \usol, \testphi \right\rangle\,dt = \int_{n\Delta t}^{(n+1)\Delta t}\left\langle \frac{\unew-\uold}{\Delta t}, \testphi \right\rangle\,dt = \int_{n\Delta t}^{(n+1)\Delta t}\left\langle \Dtuone, \testphi \right\rangle\,dt.
\end{equation*}
Therefore, combining the estimates from $I_1$ to $I_4$, we have shown that
\begin{equation}
    \label{eq:Dtu_estimate}
    \begin{aligned}
        \|\partial_t \usol\|_{L^2(0,T;V')}&=\sup_{\testphi\in L^2(0,T;V)}\frac{\left\lvert \sum_{n=0}^{N-1}\int_{n\Delta t}^{(n+1)\Delta t}\left\langle \Dtuone, \testphi \right\rangle\,dt\right\rvert}{\|\testphi\|_{L^2(0,T;V)}}\leq C,
    \end{aligned}
\end{equation}
and so $\partial_t \usol\in L^{2}(0,T;V')$ uniformly in $\Delta t$.
\end{proof}

Next we show a uniform estimate in $\Delta t$ for $\partial_t\Qsol$.

\begin{lemma}
    \label{lem:Q_H^-1_estimate}
    For every $\Delta t>0$, we have
    \begin{equation*}
        \label{eq:partial_tQ_estimate}
        \partial_t\Qsol\in L^2(0,T; L^{\frac{6}{5}}).
    \end{equation*}
\end{lemma}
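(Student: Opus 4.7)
The plan mirrors the argument for $\partial_t\usol$ in Lemma~\ref{lem:u_H^-1_estimate}. On each interval $[n\Delta t,(n+1)\Delta t)$ the piecewise-linear interpolant satisfies $\partial_t\Qsol=(\Qnew-\Qold)/\Delta t$, so equation~\eqref{eq:DtQ_step1} provides the identity
\[
\langle\partial_t\Qsol,\testvar\rangle = -\langle\uint\cdot\nabla\Qold,\testvar\rangle + \langle\Snew,\testvar\rangle + M\langle\Hnew,\testvar\rangle
\]
for every trace-free symmetric $\testvar$. Since Lemma~\ref{lem:Q-tensor_symmetry_trace-free} guarantees that both $\Qnew$ and $\Qold$ are symmetric and trace-free, so is $\partial_t\Qsol$, and its pairing against a general $L^6(\Omega)$ test function reduces to its pairing against the symmetric trace-free part, with comparable norms. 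Consequently the $L^{6/5}(\Omega)$-norm of $\partial_t\Qsol$ can be read off term-by-term from the identity above.

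I would next estimate each term in $L^{6/5}(\Omega)$ pointwise in time. For the convection term, H\"older with exponents $(3,2,6)$ together with the Sobolev embedding $H^1(\Omega)\hookrightarrow L^6(\Omega)$ (valid for $d\leq 3$) and the interpolation $\|\uint\|_{L^3}\leq C\|\uint\|_{H^1}$ yields
\[
\|\uint\cdot\nabla\Qold\|_{L^{6/5}} \leq \|\uint\|_{L^3}\,\|\nabla\Qold\| \leq C\,\|\uint\|_{H^1}\,\|\nabla\Qold\|.
\]
For the stretching term, the definition~\eqref{eq:numerical_s} shows that $\Snew$ is a sum of products of $\nabla\uint$ with $\pmb{I}$, $\Qold$, or $\Qold\otimes\Qold$; combining H\"older with $\|\Qold\|_{L^6}\leq C\|\Qold\|_{H^1}$ gives
\[
\|\Snew\|_{L^{6/5}} \leq C\,\|\nabla\uint\|\,\bigl(1 + \|\Qold\|_{H^1} + \|\Qold\|_{H^1}^2\bigr).
\]
For the molecular-field term, the boundedness of $\Omega$ provides the trivial embedding $L^2(\Omega)\hookrightarrow L^{6/5}(\Omega)$, so $\|M\Hnew\|_{L^{6/5}}\leq C\|\Hnew\|$.

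Squaring in space, integrating in time across all subintervals, and invoking the discrete energy law~\eqref{eq:basic_semi-discrete_energy_law} supplies the $\Delta t$-uniform bounds $\|\Qold\|_{H^1}\leq C$, $\sum_n\|\nabla\uint\|^2\,\Delta t\leq C$, and $\sum_n\|\Hnew\|^2\,\Delta t\leq C$. The first of these turns the polynomial $\Qold$-factor from $\Snew$ into a uniform constant, while the latter two supply the square-integrability in time required to absorb the contributions of $\nabla\uint$ and $\Hnew$. Assembling the three estimates yields $\|\partial_t\Qsol\|_{L^2(0,T;L^{6/5})}\leq C$ uniformly in $\Delta t$.

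The main obstacle I would anticipate is the cubic-in-$\Qold$ contribution from the term $-2\xi(\pmb{D}:\Qold)\Qold$ inside $S(\uint,\Qold)$, which is precisely the reason the target integrability in space is $L^{6/5}$ rather than something stronger: controlling it requires $\Qold\in L^\infty(0,T;L^6)$, which is exactly what the $L^\infty(0,T;H^1)$ energy bound delivers in dimension $d\leq 3$ through the Sobolev embedding. Once this observation is in place, the remainder of the argument is a routine H\"older/Sobolev calculation, completely parallel to the $\partial_t\usol$ estimate of the previous lemma.
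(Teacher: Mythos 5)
Your argument is correct and follows essentially the same route as the paper: identify $\partial_t\Qsol$ with the difference quotient $(\Qnew-\Qold)/\Delta t$, use the scheme equation~\eqref{eq:DtQ_step1} to express it as the sum of the convection, stretching, and molecular-field contributions, and bound each of those three pieces by H\"older/Sobolev using the discrete energy law~\eqref{eq:energy_N_2}. The paper phrases the bound through the $L^6$-duality (pairing against $\pmb{\varphi}\in L^2(0,T;L^6)$) while you estimate $\|\cdot\|_{L^{6/5}}$ directly, but these are equivalent and the H\"older exponents and Sobolev embeddings you invoke are the same ones at work in the paper's computation.
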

\begin{proof}
    For any function $\pmb{\varphi}\in L^2(0,T;L^6)$, 
    \begin{equation*}
        \label{eq:DtQ_H^-1}
         \left\langle  \frac{\Qnew-\Qold}{\Delta t}, \pmb{\varphi}\right\rangle =  -\langle \uint\cdot\nabla\Qold, \pmb{\varphi}\rangle + \langle\Snew, \pmb{\varphi}\rangle + M\langle \Hnew, \pmb{\varphi}\rangle\coloneqq \sum_{k=1}^3 J_k^n.
    \end{equation*}
    Using energy estimate \eqref{eq:energy_N_2}, the first term can be bounded as follows
    \begin{align*}
        \left|\sum_{n=0}^{N-1} \int_{n\Delta t}^{(n+1)\Delta t}J_1^n\,dt\right|&=\left\lvert\sum_{n=0}^{N-1} \int_{n\Delta t}^{(n+1)\Delta t}\int_\Omega \uint\cdot\nabla\Qold:\pmb{\varphi}\,dxdt  \right\rvert  \\
        &\leq \sum_{n=0}^{N-1} \int_{n\Delta t}^{(n+1)\Delta t} \|\uint\|_{L^4}\|\nabla\Qold\|\|\pmb{\varphi}(\cdot, t)\|_{L^4}\,dt\\
        &\stackrel{\eqref{eq:energy_N_2}}{\leq} C \sum_{n=0}^{N-1} \int_{n\Delta t}^{(n+1)\Delta t} \|\nabla\uint\|\|\pmb{\varphi}(\cdot, t)\|_{L^6}\,dt \\
        &\leq C \left(\sum_{n=0}^{N-1} \|\nabla\uint\|^2\,\Delta t \right)^{\frac{1}{2}}\left(\int_0^T \|\pmb{\varphi}(\cdot, t)\|_{L^6}^2\,dt \right)^{\frac{1}{2}} \stackrel{\eqref{eq:energy_N_2}}{\leq} C\|\pmb{\varphi}\|_{L^2(0,T;L^6)}
    \end{align*}
    Using the Sobolev inequality and definitions~\eqref{eq:S}, \eqref{eq:SnSigman}, we have
    \begin{align*}
            &\left|\sum_{n=0}^{N-1}\int_{n\Delta t}^{(n+1)\Delta t} J_2^n\,dt\right|
            \\&=\bigg\lvert \sum_{n=0}^{N-1}\int_{n\Delta t}^{(n+1)\Delta t} \Big\langle   \Tilde{\pmb{W}}^{n+1}\Qold-\Qold\Tilde{\pmb{W}}^{n+1}+\xi\left(\Qold
\Tilde{\pmb{D}}^{n+1}+\Tilde{\pmb{D}}^{n+1}\Qold\right)\\
    & \hphantom{=\bigg\lvert \sum_{n=0}^{N-1}\int_{n\Delta t}^{(n+1)\Delta t} \Big\langle }       +\frac{2\xi}{d}\Tilde{\pmb{D}}^{n+1}-\frac{2\xi}{d^2}\Div\uint \pmb{I}-2\xi(\Tilde{\pmb{D}}^{n+1}:\Qold)
    \left(\Qold+\frac{1}{d}\pmb{I}\right), \pmb{\varphi} \Big\rangle dt \bigg\rvert\\
    &\leq \sum_{n=0}^{N-1}\int_{n\Delta t}^{(n+1)\Delta t} \left(\|\nabla\uint\|\|\Qold\|_{L^4}\|\pmb{\varphi}(\cdot,t)\|_{L^4}+\|\nabla\uint\|\|\pmb{\varphi}(\cdot,t)\|+ \|\nabla\uint\|\|\Qold\|_{L^6}^2\|\pmb{\varphi}(\cdot,t)\|_{L^6} \right)\,dt\\
    &\leq C\sum_{n=0}^{N-1}\int_{n\Delta t}^{(n+1)\Delta t} \Big(\|\nabla\uint\|\|\nabla\Qold\|\|\pmb{\varphi}(\cdot,t)\|_{L^6}+\|\nabla\uint\|\|\pmb{\varphi}(\cdot,t)\|_{L^6}\\
    &\hphantom{leq C\sum_{n=0}^{N-1}\int_{n\Delta t}^{(n+1)\Delta t} \Big(\|\nabla\uint\|\|\nabla\Qold\|\|\pmb{\varphi}(\cdot,t)\|_{L^6}+\|\nabla\uint\|}+ \|\nabla\uint\|\|\nabla\Qold\|^2\|\pmb{\varphi}(\cdot,t)\|_{L^6} \Big)\,dt\\
    &\stackrel{\eqref{eq:energy_N_2}}{\leq} C\sum_{n=0}^{N-1}\int_{n\Delta t}^{(n+1)\Delta t} \left(\|\nabla\uint\|\|\pmb{\varphi}(\cdot,t)\|_{L^6}+\|\nabla\uint\|\|\pmb{\varphi}(\cdot,t)\|_{L^6}+ \|\nabla\uint\|\|\pmb{\varphi}(\cdot,t)\|_{L^6} \right)\,dt\\
    &\leq C \left(\sum_{n=0}^{N-1} \|\nabla\uint\|^2\,\Delta t \right)^{\frac{1}{2}}\left(\int_0^T \|\pmb{\varphi}(\cdot, t)\|_{L^6}^2\,dt \right)^{\frac{1}{2}} \stackrel{\eqref{eq:energy_N_2}}{\leq} C\|\pmb{\varphi}\|_{L^2(0,T;L^6)}
    \end{align*}
    The last term $J_3^n$ satisfies
    \begin{align*}
            \left|\sum_{n=0}^{N-1}\int_{n\Delta t}^{(n+1)\Delta t} J_3^n\,dt\right|&=\left \lvert \sum_{n=0}^{N-1}\int_{n\Delta t}^{(n+1)\Delta t} \int_\Omega \Hnew:\pmb{\varphi}\,dxdt \right\rvert\\
            &\leq  \sum_{n=0}^{N-1}\int_{n\Delta t}^{(n+1)\Delta t} \|\Hnew\|\|\pmb{\varphi}(\cdot,t)\|\,dt\\
            &\leq  \sum_{n=0}^{N-1}\int_{n\Delta t}^{(n+1)\Delta t} \|\Hnew\|\|\pmb{\varphi}(\cdot,t)\|_{L^6}\,dt\\
             &\leq C \left(\sum_{n=0}^{N-1} \|\Hnew\|^2\,\Delta t \right)^{\frac{1}{2}}\left(\int_0^T \|\pmb{\varphi}(\cdot, t)\|_{L^6}^2\,dt \right)^{\frac{1}{2}} \stackrel{\eqref{eq:energy_N_2}}{\leq} C\|\pmb{\varphi}\|_{L^2(0,T;L^6)}
    \end{align*}
    Combining these estimates for $J_1, J_2$ and $J_3$, we have shown $\partial_t\Qsol\in L^2(0,T; L^{\frac{6}{5})}$.
\end{proof}
This estimate naturally leads to the following corollary:
\begin{corollary}
\label{cor:Dtr_estimate}
   We have
   \begin{equation*}
       \partial_t\rsol\in L^2(0,T; L^1).
   \end{equation*}
\end{corollary}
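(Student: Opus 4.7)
The plan is to use the identity \eqref{eq:Dtr_step1} directly: on each sub-interval $S_n=[n\Delta t,(n+1)\Delta t)$ we have
\begin{equation*}
\partial_t\rsol(t) \;=\; \frac{r^{n+1}-r^n}{\Delta t} \;=\; \Pold:\frac{\Qnew-\Qold}{\Delta t} \;=\; \Pold:\partial_t\Qsol(t),
\end{equation*}
so everything reduces to controlling the product $\Pold:\partial_t\Qsol$ in $L^2(0,T;L^1(\dom))$.

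First I would derive a uniform $L^\infty(0,T;L^6(\Omega))$ bound on $\Pold$ by combining the Lipschitz continuity of $P$ from Lemma~\ref{lem:P_lipschitz} with the uniform $L^\infty(0,T;H^1(\Omega))$ estimate on $\Qold$ that follows from the discrete energy law~\eqref{eq:energy_N_2}. Specifically, $|P(\pmb{Q})|\le |P(\pmb{0})|+\tilde L\,|\pmb{Q}|$, and the Sobolev embedding $H^1(\Omega)\hookrightarrow L^6(\Omega)$ in dimension $d\le 3$ then gives $\|\Pold\|_{L^6}\le C(|P(\pmb{0})|\,|\Omega|^{1/6}+\tilde L\,\|\Qold\|_{H^1})\le C$ uniformly in $n$ and $\Delta t$.

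Next I would apply H\"older's inequality with conjugate exponents $6$ and $6/5$ pointwise in time:
\begin{equation*}
\|\partial_t\rsol(t)\|_{L^1} \;\le\; \|\Pold\|_{L^6}\,\|\partial_t\Qsol(t)\|_{L^{6/5}} \;\le\; C\,\|\partial_t\Qsol(t)\|_{L^{6/5}}.
\end{equation*}
Squaring and integrating in time, and invoking Lemma~\ref{lem:Q_H^-1_estimate}, yields
\begin{equation*}
\int_0^T\|\partial_t\rsol(t)\|_{L^1}^2\,dt \;\le\; C^2\,\|\partial_t\Qsol\|_{L^2(0,T;L^{6/5})}^2 \;\le\; C,
\end{equation*}
which is precisely the claim. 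I do not expect a serious obstacle here: the main ingredients (Lipschitz control of $P$, energy bound on $\Qsol$ in $H^1$, and the already established bound on $\partial_t\Qsol$) are all in place, and the only subtlety is choosing the H\"older pairing $(6,6/5)$ that matches the target space $L^{6/5}$ of the previous lemma with the available $H^1$-based bound on $P(\pmb{Q})$.
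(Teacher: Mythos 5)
Your proposal is correct and follows essentially the same path as the paper's proof: the pointwise identity $\partial_t\rsol=\Pold:\partial_t\Qsol$ from \eqref{eq:Dtr_step1}, a H\"older pairing $(6,6/5)$ in space, the $L^6$ bound on $\Pold$ via Lemma~\ref{lem:P_lipschitz} and the Sobolev embedding $H^1\hookrightarrow L^6$, the $H^1$ energy bound on $\Qold$ from \eqref{eq:energy_N_2}, and then Lemma~\ref{lem:Q_H^-1_estimate}. Your explicit use of $|P(\pmb{Q})|\le|P(\pmb{0})|+\tilde L|\pmb{Q}|$ is slightly more careful than the paper's statement, which writes $\|\Pold\|_{L^6}\lesssim\|\Qold\|_{L^6}$ and implicitly absorbs the constant $P(\pmb{0})$; this is a cosmetic improvement, not a different method.
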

\begin{proof}
    We obtain from \eqref{eq:Dtr_step1} that
    \begin{equation*}
        \label{eq:Dtr_estimate}
        \begin{aligned}
\int_0^T\left(\int_\Omega\lvert \partial_t \rsol(x,t)dx \rvert\right)^2dt&= \sum_{n=0}^{N-1}\int_{n\Delta t}^{(n+1)\Delta t} \norm{\Pold:\partial_t\Qsol(\cdot, t)}_{L^1}^2\,dt\\
            &\leq \sum_{n=0}^{N-1}\int_{n\Delta t}^{(n+1)\Delta t} \norm{\Pold}_{L^6}^2:\norm{\partial_t\Qsol(\cdot, t)}_{L^{\frac{6}{5}}}^2 dt\\
            &\stackrel{\text{Lemma } \ref{lem:P_lipschitz}}{\leq} C\sum_{n=0}^{N-1}\int_{n\Delta t}^{(n+1)\Delta t} \norm{\Qold}_{L^6}^2:\norm{\partial_t\Qsol(\cdot, t)}_{L^{\frac{6}{5}}}^2 dt\\
            &\leq C\sum_{n=0}^{N-1}\int_{n\Delta t}^{(n+1)\Delta t} \norm{\nabla\Qold} ^2:\norm{\partial_t\Qsol(\cdot, t)}_{L^{\frac{6}{5}}}^2 dt\\
         &\stackrel{\eqref{eq:energy_N_2}}{\leq }C\sum_{n=0}^{N-1}\int_{n\Delta t}^{(n+1)\Delta t} \norm{\partial_t\Qsol(\cdot, t)}_{L^{\frac{6}{5}}}^2\,dt=C\|\partial_t\Qsol\|_{L^2(0,T; L^{6/5})}^2<\infty.
            \end{aligned}
            \end{equation*}
  \end{proof}

As in~\cite{Paicu_global_existence, paicu_energy_dissipation}, we will show that the approximations of $\pmb{Q}$ are uniformly bounded in $L^2([0,T];H^2)$. This is critical for obtaining weak solutions. In~\cite{Paicu_global_existence, paicu_energy_dissipation}, this result is obtained via Sobolev embeddings and using the integrability of the bulk potential term in the energy. Due to the reformulation with the auxiliary variable, the same integrability is not available for the auxiliary variable $r$ through the a priori energy estimate. However, it is possible to obtain the $L^2([0,T];H^2)$-regularity using Lemma~\ref{lem:agmon}:  
\begin{lemma}\label{lem:Delta_Q_estimate}
    If $\pmb{Q}^0\in H^2(\Omega)$, then
    \begin{equation}
        \label{eq:Q_H2_estimate}
        \Delta t\sum_{k=1}^N \|\Delta \pmb{Q}^k\|^2\leq C.
    \end{equation}
    \end{lemma}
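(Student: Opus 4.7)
The plan is to use the relation $L\Delta\pmb{Q}^{n+1} = \pmb{H}^{n+1} + r^{n+1}\pmb{P}^n$ that comes from \eqref{eq:Hn+1} (in its strong form, valid since we have established $\pmb{Q}^{n+1}\in H^2$), and then to control the two terms on the right-hand side uniformly. The term $\pmb{H}^{n+1}$ is already controlled in a summed $L^2$ sense by the discrete energy identity \eqref{eq:energy_N_2}. The delicate term is $r^{n+1}\pmb{P}^n$, because the natural bound for $r^{n+1}$ is only $L^\infty_t L^2_x$, so the factor $\pmb{P}^n$ must be estimated in $L^\infty_x$.

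First I would write
\begin{equation*}
L^2\|\Delta\pmb{Q}^{n+1}\|^2 \;\le\; 2\|\pmb{H}^{n+1}\|^2 + 2\|r^{n+1}\pmb{P}^n\|^2,
\end{equation*}
and then estimate
\begin{equation*}
\|r^{n+1}\pmb{P}^n\|^2 \;\le\; \|\pmb{P}^n\|_{L^\infty}^2\,\|r^{n+1}\|^2.
\end{equation*}
By Lemma~\ref{lem:P_lipschitz}, $\|\pmb{P}^n\|_{L^\infty} \le C(1+\|\pmb{Q}^n\|_{L^\infty})$. Agmon's inequality (Lemma~\ref{lem:agmon}) together with the $H^2$-estimate for the Laplacian (Lemma~\ref{lem:laplace_estimate}) give
\begin{equation*}
\|\pmb{Q}^n\|_{L^\infty}^2 \;\le\; C\|\pmb{Q}^n\|_{H^1}\,\|\pmb{Q}^n\|_{H^2} \;\le\; C\|\pmb{Q}^n\|_{H^1}\,\|\Delta\pmb{Q}^n\|.
\end{equation*}
Since $\|\pmb{Q}^n\|_{H^1}$ and $\|r^{n+1}\|$ are uniformly bounded by the discrete energy law \eqref{eq:energy_N_2}, this yields
\begin{equation*}
\|r^{n+1}\pmb{P}^n\|^2 \;\le\; C + C\|\Delta\pmb{Q}^n\|.
\end{equation*}

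Multiplying by $\Delta t$ and summing over $n=0,\dots,N-1$, I obtain
\begin{equation*}
L^2\Delta t\sum_{k=1}^{N}\|\Delta\pmb{Q}^k\|^2 \;\le\; 2\Delta t\sum_{k=1}^{N}\|\pmb{H}^k\|^2 + CT + C\Delta t\sum_{k=0}^{N-1}\|\Delta\pmb{Q}^k\|.
\end{equation*}
The first term on the right is controlled by the energy estimate \eqref{eq:energy_N_2}. For the last term I would apply Young's inequality,
\begin{equation*}
C\|\Delta\pmb{Q}^k\| \;\le\; \frac{L^2}{4}\|\Delta\pmb{Q}^k\|^2 + C',
\end{equation*}
and then reindex: $\sum_{k=0}^{N-1}\|\Delta\pmb{Q}^k\|^2 \le \|\Delta\pmb{Q}^0\|^2 + \sum_{k=1}^{N}\|\Delta\pmb{Q}^k\|^2$. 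Since $\pmb{Q}^0\in H^2$, the $k=0$ contribution is a harmless constant. Absorbing the $\frac{L^2}{4}\Delta t\sum_{k=1}^{N}\|\Delta\pmb{Q}^k\|^2$ into the left-hand side closes the estimate.

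The only real obstacle is that the auxiliary variable $r$ has worse integrability than the original $\sqrt{\mathcal{F}_B(\pmb{Q})}$, as highlighted in the introduction; this forces one to push the regularity into $\pmb{P}^n$ via Agmon's inequality, at the cost of introducing a $\|\Delta\pmb{Q}^n\|$ on the right-hand side. The fact that this term appears only linearly (thanks to the quadratic splitting in $\|\pmb{Q}^n\|_{H^1}\|\pmb{Q}^n\|_{H^2}$) is what allows a clean Young-type absorption, rather than forcing a discrete Gronwall argument with time-step restrictions.
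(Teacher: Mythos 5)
Your proposal is correct and follows essentially the same route as the paper's proof: isolate $L\Delta\pmb{Q}^{k+1} = \pmb{H}^{k+1} + r^{k+1}\pmb{P}^k$ from \eqref{eq:Hn+1}, bound the $\pmb{H}^{k+1}$ contribution by the discrete energy law, bound $r^{k+1}\pmb{P}^k$ by pushing all the spatial regularity onto $\pmb{P}^k$ via Lemma~\ref{lem:P_lipschitz}, Agmon (Lemma~\ref{lem:agmon}) and the Laplace estimate (Lemma~\ref{lem:laplace_estimate}), and then absorb the resulting linear $\|\Delta\pmb{Q}^k\|$ term with Young's inequality after reindexing. The only cosmetic difference is that the paper works from the $L^2$-pairing $L\|\Delta\pmb{Q}^{k+1}\|^2 = \langle\pmb{H}^{k+1},\Delta\pmb{Q}^{k+1}\rangle + \langle r^{k+1}\pmb{P}^k,\Delta\pmb{Q}^{k+1}\rangle$ rather than the pointwise identity, but this leads to the same estimate.
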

    \begin{proof}
    As it is shown in Theorem \ref{thm:Solvability_scheme}, for each $k\in\mathbb{N}$, $\pmb{Q}^k\in H^2(\Omega)$. Therefore, we can integrate by parts in \eqref{eq:Hn+1} which leads to 
    \begin{equation*}
        \label{eq:H_scheme_integration_by_parts}
        \langle \pmb{H}^{k+1}, \pmb{\phi}\rangle = L\langle \Delta \pmb{Q}^{k+1},\pmb{\phi}\rangle-\langle r^{k+1}\pmb{P}^k,\pmb{\phi}\rangle,
    \end{equation*}
    for any smooth $\pmb{\phi}$ with compact support. By density of $C_c^\infty(\Omega)$ in $L^2(\Omega)$, we can use test functions in $L^2(\dom)$ and in particular, we can choose $\Delta\pmb{Q}^{k+1}$ as a test function to obtain
    \begin{equation*}
        \label{eq:Q_H2_estimate_weak2strong}
        L\langle \Delta \pmb{Q}^{k+1}, \Delta \pmb{Q}^{k+1}\rangle =\langle \pmb{H}^{k+1}, \Delta \pmb{Q}^{k+1}\rangle+ \langle r^{k+1}\pmb{P}^k, \Delta \pmb{Q}^{k+1}\rangle.
    \end{equation*}
    
   Using Lemma \ref{lem:basic_discrete_energy_law}, Lemma ~\ref{lem:agmon}, Lemma \ref{lem:laplace_estimate} and Lemma \ref{lem:P_lipschitz}, we have
    \begin{equation*}
        \label{eq:Laplace_Q_L2}
        \begin{aligned}
            L\|\Delta \pmb{Q}^{k+1}\|^2
            &\leq C\left(\|\pmb{H}^{k+1}\|^2+\|r^{k+1}P(\pmb{Q}^k)\|^2 \right)+\frac{L}{4}\|\Delta \pmb{Q}^{k+1}\|^2\\
       &\leq C\left(\|\pmb{H}^{k+1}\|^2+\|P(\pmb{Q}^k)\|_{L^\infty}^2\|r^{k+1}\|^2 \right)+\frac{L}{4}\|\Delta \pmb{Q}^{k+1}\|^2\\
      &\stackrel{\text{Lemma }\ref{lem:P_lipschitz}}{\leq} C\left(\|\pmb{H}^{k+1}\|^2+\|\pmb{Q}^k\|_{L^\infty}^2\|r^{k+1}\|^2 \right)+\frac{L}{4}\|\Delta \pmb{Q}^{k+1}\|^2\\
      &\stackrel{\text{Lemma }\ref{lem:agmon}}{\leq} C\left(\|\pmb{H}^{k+1}\|^2+\|\pmb{Q}^k\|_{H^2}+1 \right)+\frac{L}{4}\|\Delta \pmb{Q}^{k+1}\|^2\\
      &\stackrel{\text{Lemma }\ref{lem:laplace_estimate}}{\leq} C\left(\|\pmb{H}^{k+1}\|^2+\|\Delta \pmb{Q}^k\|+1 \right)+\frac{L}{4}\|\Delta \pmb{Q}^{k+1}\|^2\\
      &\leq C\left(1+\|\pmb{H}^{k+1}\|^2\right)+\frac{L}{4}\|\Delta \pmb{Q}^k\|^2+\frac{L}{4}\|\Delta \pmb{Q}^{k+1}\|^2.
      \end{aligned}
    \end{equation*}
    Multiplying $\Delta t$ on both sides and summing from $k=0$ to $k=N-1$, we have
    \begin{equation*}
        \label{eq:Delta_Q_sum_estimate}
        \frac{L}{4}\|\Delta \pmb{Q}^N\|^2\Delta t+\frac{L }{2}\sum_{k=1}^N\|\Delta \pmb{Q}^k\|^2\Delta t\leq \frac{L}{4}\|\Delta \pmb{Q}^0\|^2\Delta t+\sum_{k=1}^N C\left(1+\|\pmb{H}^{k+1}\|^2\right)\Delta t,
    \end{equation*}
    which is bounded uniformly in $\Delta t$ thanks to the discrete energy estimate ~\eqref{eq:energy_N_2}.
    
    \end{proof}

\section{Convergence analysis}\label{sec:convergence}

In this section, we will prove convergence of the semi-discrete numerical scheme constructed in the previous section as the time step $\Delta t$ tends to zero. We will show that a subsequence of $\{\Qsol, \uappr, \pmb{H}_{\Delta t},\rsol\}_{\Delta t}$ converges to a weak solution of system \eqref{eq:ut}-\eqref{eq:H}. This leads to the following main theorem:
\begin{theorem}
    The piece-wise linear interpolations \eqref{eq:usol}-\eqref{eq:rsol} computed using scheme \eqref{eq:Dtu_step1}-\eqref{eq:scheme_divergencefree} converge up to
a subsequence to a weak solution of \eqref{eq:ut}-\eqref{eq:H} (as in Definition \ref{def:weakreformulation}) as $\Delta t\to0$.
\end{theorem}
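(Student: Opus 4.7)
The plan is to exploit the uniform (in $\Delta t$) a priori bounds established in Lemmas~\ref{lem:basic_discrete_energy_law},~\ref{lem:u_H^-1_estimate},~\ref{lem:Q_H^-1_estimate}, Corollary~\ref{cor:Dtr_estimate}, and Lemma~\ref{lem:Delta_Q_estimate} together with the Aubin--Lions lemma (Lemma~\ref{lem:Aubin-Lions}) to extract a subsequence $\Delta t_m\to 0$ along which $\Qsolsubseq\to\pmb{Q}$ strongly in $L^2(0,T;H^1(\Omega))$ (using $\Qsol\in L^\infty(0,T;H^1)\cap L^2(0,T;H^2)$ and $\partial_t\Qsol\in L^2(0,T;L^{6/5})$) and $\usolsubseq\to\pmb{u}$ strongly in $L^2(0,T;L^2(\Omega))$ (using $\pmb{u}_{\Delta t}\in L^\infty(0,T;L^2)\cap L^2(0,T;H^1)$ and $\partial_t\usol\in L^2(0,T;V')$), while $\Hsolsubseq\rightharpoonup \pmb{H}$ in $L^2(0,T;L^2)$, $\rsolsubseq\overset{*}{\rightharpoonup} r$ in $L^\infty(0,T;L^2)$, and weak(-$*$) limits exist for $\nabla\Qsolsubseq$, $\nabla\usolsubseq$, etc. A standard step is to control the difference between the various interpolants by using that $\sum_n\|\uint-\uold\|^2$, $\sum_n\|\nabla\Qnew-\nabla\Qold\|^2$, $\sum_n\|\unew-\uint\|^2$ and $\sum_n\|r^{n+1}-r^n\|^2$ are uniformly bounded via~\eqref{eq:basic_semi-discrete_energy_law}, so that $\|\uappr-\usol\|_{L^2}=O(\sqrt{\Delta t})$, $\|\Qsol-\Qbacksol\|_{L^2(H^1)}=O(\sqrt{\Delta t})$, etc. Consequently the piecewise-constant analogues share the same limits as the piecewise-linear interpolants.

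Next I pass to the limit in the weak formulation of~\eqref{eq:uQevolution}--\eqref{eq:pressureupdate} restated on time-slabs against smooth divergence-free $\pmb{\psi}$, trace-free symmetric $\testvar$, and scalar $\phi$. The linear diffusion terms $\mu\langle\nabla\uappr,\nabla\pmb{\psi}\rangle$, $L\langle\nabla\Qsol,\nabla\testvar\rangle$ and $M\langle\Hsol,\testvar\rangle$ pass via weak $L^2$ convergence. The convection term $\Tilde{B}(\uold,\uint,\pmb{\psi})$ is handled by the strong-weak pairing: strong $L^2((0,T)\times\Omega)$ convergence of $\usolsubseq$ combined with the uniform $L^2(H^1)$ bound of $\uappr$ gives convergence of $\int\int \uold\otimes\uint:\nabla\pmb{\psi}$, while the divergence-correction $\tfrac12(\Div\uold)\uint$ vanishes in the limit because $\Div\uold=0$ holds after step 2. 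The $\pmb{Q}$-transport term $\langle\uint\cdot\nabla\Qold,\testvar\rangle$ passes by pairing strong $L^2(H^1)$ convergence of $\Qsolsubseq$ with weak $L^2(L^2)$ convergence of $\uappr$. The terms in $\pmb{S}$, $\pmb{\Sigma}$ and $\pmb{H}\nabla\pmb{Q}$ are quadratic in $(\pmb{Q},\pmb{H})$ or $(\pmb{Q},\nabla\pmb{u})$; these are handled by combining the strong convergence of $\Qsolsubseq$ in $L^2(H^1)\hookrightarrow L^2(L^6)$ (by Sobolev) with the weak $L^2(L^2)$ convergence of $\Hsolsubseq$ and $\nabla\uappr$. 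Crucially, Lemma~\ref{lem:Delta_Q_estimate} upgrades $\Qsolsubseq$ to be bounded in $L^2(0,T;H^2)$, so by Agmon (Lemma~\ref{lem:agmon}) $\Qsolsubseq\to\pmb{Q}$ strongly in, e.g., $L^2(L^\infty)$, which is ample for passing to the limit in all products involving $\pmb{Q}$.

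To treat the auxiliary variable and the $\pmb{H}$-equation, I use the Lipschitz continuity of $P$ (Lemma~\ref{lem:P_lipschitz}) to deduce $\Psolsubseq\to P(\pmb{Q})$ strongly in $L^2(0,T;L^2)$, indeed in $L^2(L^\infty)$ by the Agmon-based bound above. Pairing this strong convergence with the weak $L^2$-convergence of $\rsolsubseq$ then lets me pass to the limit in the term $\langle r^{n+1}\Pold,\testvar\rangle$ appearing in~\eqref{eq:Hn+1}, yielding~\eqref{eq:re_weakformHr}. For the discrete $r$-update~\eqref{eq:Dtr_step1}, summation by parts against a smooth test function $\phi$ compactly supported in $[0,T)$ produces
\begin{equation*}
\int_0^T\!\!\int_\Omega \rsolsubseq\,\partial_t\phi\,dxdt+\int_\Omega r^0\phi(0)\,dx
=\int_0^T\!\!\int_\Omega \Psolsubseq:\partial_t\Qsolsubseq\,\phi\,dxdt+o(1),
\end{equation*}
and the right-hand side converges to $\int_0^T\int_\Omega P(\pmb{Q}):\partial_t\pmb{Q}\,\phi\,dxdt$ by strong$\times$weak convergence, while the left-hand side converges to $\int_0^T\int_\Omega r\,\partial_t\phi+\int r_0\phi(0)$, yielding~\eqref{eq:re_weakr}. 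Attainment of initial conditions follows from continuity in time of the limits in the weak sense together with the convergence of the interpolants at $t=0$, and the divergence-free constraint on $\pmb{u}$ and trace-free/symmetry properties of $\pmb{Q}$ are preserved in the limit thanks to Lemma~\ref{lem:Q-tensor_symmetry_trace-free} and~\eqref{eq:scheme_divergencefree}.

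The main obstacle I anticipate is passing to the limit in the nonlinear coupling between $r$ and $P(\pmb{Q})$: since $r$ is only weakly convergent, every occurrence of $r^{n+1}P(\pmb{Q}^n)$ must be matched with a genuinely strong convergence of $P(\Qbacksolsubseq)$, which in turn relies critically on the $L^2(H^2)$-bound of Lemma~\ref{lem:Delta_Q_estimate} to upgrade to strong $L^p$-convergence with $p$ large enough (via Agmon). A secondary technical point is ensuring that the shifted interpolants $\Qbacksolsubseq$, $\Psolsubseq$ have the same limit as $\Qsolsubseq$, $P(\Qsolsubseq)$ respectively; this follows from the telescoping bound $\sum_n\|\nabla\Qnew-\nabla\Qold\|^2\leq C$ together with Lipschitz continuity of $P$. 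Once these compactness pieces are in place, collecting all term-by-term limits gives the weak formulations~\eqref{eq:re_weakformu}--\eqref{eq:re_weakformHr} and completes the proof.
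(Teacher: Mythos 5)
Your overall strategy matches the paper's Steps 2 through 5: Aubin--Lions for compactness, controlling the gap between piecewise-linear, piecewise-constant and shifted interpolants by the increment bounds from the discrete energy law~\eqref{eq:basic_semi-discrete_energy_law}, identifying the limits of $\Ssol$, $\Sigmasol$, $\Hsol$ by strong-times-weak pairing, and passing to the limit in the weak formulations. Your idea of upgrading $\Qsolsubseq$ to strong convergence in $L^2(0,T;L^\infty)$ by interpolating (Agmon) between the vanishing $L^2(H^1)$ error and the bounded $L^2(H^2)$ norm, and then pushing this through $P$ via Lemma~\ref{lem:P_lipschitz}, is actually a slightly cleaner route than the paper's term-by-term $K_1,\dots,K_4$ decomposition of $\widetilde{r\pmb{P}}_{\Delta t}-rP(\pmb{Q})$; it works because once you have strong $L^2(L^\infty)$ convergence of $P(\Qbacksolsubseq)$, the weak-$*$ limit of $\rsolsubseq$ passes against it in every product.

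There is, however, a genuine gap: you never address the initialization of the scheme. The weak-solution class (Definition~\ref{def:weakreformulation}) only requires $\Qini\in H^1(\Omega)$, whereas Theorem~\ref{thm:Solvability_scheme} and, crucially, Lemma~\ref{lem:Delta_Q_estimate} demand $\pmb{Q}^0\in H^1_0\cap H^2$ and implicitly require $\Delta t\,\|\Delta\pmb{Q}^0\|^2$ to be uniformly bounded in $\Delta t$ so that the telescoping in Lemma~\ref{lem:Delta_Q_estimate} does not blow up as $\Delta t\to 0$. If you naively set $\pmb{Q}^0=\Qini$ for $\Qini\notin H^2$ the estimate fails; if $\Qini\in H^2$ it works but you lose generality. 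The paper's Step~1 resolves this by defining $\pmb{Q}^0$ as the solution of $(\pmb{I}-\Delta t\,\Delta)\pmb{Q}^0=\Qini$, which gives $\Delta t\|\Delta\pmb{Q}^0\|^2\leq\tfrac12\|\nabla\Qini\|^2$ and $\pmb{Q}^0\to\Qini$ strongly in $L^2$ (so the initial condition is still attained in the limit). Without some such regularization your appeal to Lemma~\ref{lem:Delta_Q_estimate} --- on which the entire $L^2(L^\infty)$ upgrade and hence the treatment of $rP(\pmb{Q})$ rests --- is unjustified. A minor secondary point: the quantity produced by the scheme in the $\pmb{H}$-equation is the time-lagged interpolant $\widetilde{r\pmb{P}}_{\Delta t}$ built from the products $r^n\pmb{P}^{n-1}$ and $r^{n+1}\pmb{P}^n$, not $\rsol\Psol$; your increment-plus-Lipschitz argument covers the mismatch, but it would be worth stating this explicitly rather than folding it into the remark about shifted interpolants.
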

\begin{proof}
Our proof utilizes the energy estimates derived in the last section for the linear interpolations defined in \eqref{eq:usol}-\eqref{eq:rsol}. Then we will use compactness theorems, such as Lemma \ref{lem:Aubin-Lions}, to extract a convergent subsequence and pass the limit, obtaining a weak solution of system \eqref{eq:ut}-\eqref{eq:H}.  We split the proof into several steps as follows.

\pmb{Step 1: Smoothing the initial data.  }
In order for Lemma~\ref{lem:Delta_Q_estimate} to be useful, we need $\Delta t \norm{\Delta \pmb{Q}^0}^2$ to be uniformly bounded in $\Delta t$. However, the initial data $\Qini$ may be less regular, for example, in $H^1(\dom)$ only. In order to approximate $\Qini$ with a sufficiently regular initial approximation $\pmb{Q}^0$, we proceed as follows: Given $\Qini\in H_0^1$, we determine $\pmb{Q}^0\in H^1_0\cap H^2$ by solving the equation
\begin{equation*}
    \label{eq:determine_Q0}
    (\pmb{I }-\Delta t\Delta)\,\pmb{Q}^0=\Qini.
\end{equation*}
We obtain from an energy estimate that
\begin{equation}
    \label{eq:Q0_approximation_energy_estimate}
    \|\nabla \pmb{Q}^0\|^2+\Delta t\|\Delta \pmb{Q}^0\|^2\leq \|\nabla \Qini\|\,\|\nabla \pmb{Q}^0\|\leq \frac{1}{2}\|\nabla \Qini\|^2+\frac{1}{2}\|\nabla \pmb{Q}^0\|^2.
\end{equation}
This implies that $\Delta t\|\Delta \pmb{Q}^0\|^2$ is bounded and, therefore, $\|\Delta t\Delta \pmb{Q}^0\|^2=O(\Delta t)$. As $\Delta t$ tends to $0$, $\|\Delta t\Delta \pmb{Q}^0\|^2$ tends to $0$. Then we can conclude that $\pmb{Q}^0\to \Qini$ strongly in $L^2$ and weakly in $H^1$.

\pmb{Step 2: Compactness.} The a priori estimates from the previous section can be summarized as follows: For any fixed $T>0$,
\begin{equation}
    \label{eq:estimates}
    \begin{aligned}
          &\sup_{\Delta t}\|\Qsol\|_{L^2([0,T];H^2)\cap L^\infty([0,T]; H^1)}< \infty,\quad\quad\sup_{\Delta t}\|\Qbacksol\|_{L^2([0,T];H^2)\cap L^\infty([0,T]; H^1)}< \infty,\\
          &\sup_{\Delta t}\|\usol\|_{L^2([0,T];H^1_\sigma)\cap L^\infty([0,T]; L^2_\sigma)}<\infty,\quad\quad \sup_{\Delta t}\|\uappr\|_{L^2([0,T];H^1)\cap L^\infty([0,T]; L^2)}<\infty\\
          & \hphantom{\sup_{\Delta t}\|\usol\|_{L^2([0,T];H^1_\sigma)\cap L^\infty([0,T]; L^2_\sigma)}}  \sup_{\Delta t}\|\rsol\|_{L^\infty([0,T]; L^2(\dom))}<\infty.
    \end{aligned}
\end{equation}
Lemma \ref{lem:u_H^-1_estimate} and Lemma \ref{lem:Q_H^-1_estimate} imply
\begin{equation}
    \label{eq:derivative_estimate}
  \sup_{\Delta t}\|\partial_t\usol\|_{L^2(0,T;V')}<\infty,\qquad \sup_{\Delta t}\|\partial_t\Qsol\|_{L^2(0,T;H^{-1})}<\infty.
\end{equation}
Noting that $L^2_{\sigma}(\Omega)$ is continuously embedded into $V'=\left(H^2(\Omega)\cap H^1_{0,\sigma}(\Omega)\right)'$, we can apply Lemma~\ref{lem:Aubin-Lions} to obtain that there exists $\pmb{u}\in L^2([0,T];H^1_{0,\sigma})\cap L^\infty([0,T]; L^2)$ and a subsequence of $\{\usol\}_{\Delta t}$, which will be denoted as $\{\usolsubseq\}_{m}$, such that
\begin{equation}
    \label{eq:u_sol_convergence}
    \usolsubseq\rightharpoonup \pmb{u} \text{ in } L^2(0,T;H^1_{0,\sigma}),\quad \usolsubseq\to \pmb{u} \text{ in } L^2(0,T; L_\sigma^2), \quad\usolsubseq(t)\stackrel{*}{\rightharpoonup} \pmb{u}(t) \text{ in } L^2 \text{ for a.e. } t\in[0,T],
\end{equation}
Similarly, for the Q-tensor, $H^1$ is continuously embedded into $H^{-1}(\Omega)$ and so we apply Lemma \ref{lem:Aubin-Lions} again, to obtain $\pmb{Q}, \pmb{Q}^*\in L^2([0,T]; H^2)\cap L^\infty([0,T]; H^1)$ and subsequences of $\{\Qsol\}_{\Delta t}$ and $\{\Qbacksol\}_{\Delta t}$ which will be denoted by $\Qsolsubseq$ and $\Qbacksolsubseq$, such that
\begin{equation}
    \label{eq:Qsol_convergence}
    \Qsolsubseq \rightharpoonup \pmb{Q} \text{ in } L^2(0,T; H^2),\quad \Qsolsubseq \to \pmb{Q} \text{ in } L^2(0,T; H^{1}),\quad \Qsolsubseq(t)\to \pmb{Q}(t) \text{ in } L^2,\, \forall\, t\in[0,T],
\end{equation}
\begin{equation}
    \label{eq:Qbacksol_convergence}
    \Qbacksolsubseq \rightharpoonup \pmb{Q}^* \text{ in } L^2([0,T]; H^2),\quad \Qbacksolsubseq \to \pmb{Q}^* \text{ in } L^2([0,T]; H^{1}),\quad \Qbacksolsubseq(t)\to \pmb{Q}^*(t) \text{ in } L^2, \,\forall\, t\in[0,T].
\end{equation}
According to Lemma \ref{lem:P_lipschitz}, the Lipschitz continuity of $P$ guarantees the strong convergence properties of subsequence $\{\Qbacksolsubseq\}_m$ hold as well for the sequence $\{P(\Qsolsubseq)\}_m$, that is,
\begin{equation}
    \label{eq:PQsol_convergence}
   \Psolsubseq \to P(\pmb{Q}^*) \text{ in } L^2([0,T)\times\dom).
\end{equation}
In view of the Banach-Alaoglu theorem~\cite{Folland} and Lemma~\ref{lem:Q_H^-1_estimate}, we can extract a weakly convergent subsequence $\{\partial_t \Qsolsubseq\}_m$ such that
\begin{equation}
    \label{eq:DtQ_convergence}
    \partial_t \Qsolsubseq \rightharpoonup \partial_t \pmb{Q} \text{ in } L^2([0,T];H^{-1}),
\end{equation}
and a weakly convergent subsequence of $\{\rsolsubseq\}_m$ from $\{\rsol\}_{\Delta t}$ such that
\begin{equation}
    \label{eq:r_convergence}
    \rsolsubseq \overset{\ast}{\rightharpoonup} r \text{ in } L^\infty([0,T]; L^2).
\end{equation}

\pmb{Step 3: Equivalence between $\pmb{Q}$ and $\pmb{Q}^*$ and convergence of $\uappr$.} This step's primary purpose is to show that the limit functions of the various subsequences coincide. Noting that $\Qsolsubseq$ differs from $\Qbacksolsubseq$ since they are interpolations of numerical solutions obtained at consecutive time steps, we can make use of the upper bound of the term $\sum_{n=0}^N\|\nabla \Qnew-\nabla\Qold\|^2$ obtained in Lemma \ref{lem:basic_discrete_energy_law} to deduce that
\begin{equation}
    \label{eq:Q-Q*}
    \begin{aligned}
        &\quad\,\,\|\pmb{Q}-\pmb{Q}^*\|_{L^2([0,T]; H^1)}\\
        &\leq \|\pmb{Q}-\Qsolsubseq\|_{L^2([0,T]; H^1)}+\|\Qsolsubseq-\Qbacksolsubseq\|_{L^2([0,T]; H^1)}+\|\pmb{Q}^*-\Qbacksolsubseq\|_{L^2([0,T]; H^1)}\\
        &=\|\pmb{Q}-\Qsolsubseq\|_{L^2([0,T]; H^1)}\\
        &\quad+\sum_{n=0}^{N}\left\|\frac{(n+1)\Delta t-t}{\Delta t}\,(\Qold-\pmb{Q}^{n-1})+\frac{t-n\Delta t}{\Delta t}\,(\Qnew-\Qold)\right\|_{L^2(S_n; H^1)}+\|\pmb{Q}^*-\Qbacksolsubseq\|_{L^2([0,T]; H^1)}\\
         &=\|\pmb{Q}-\Qsolsubseq\|_{L^2([0,T]; H^1)}\\
         &\quad+\sum_{n=0}^{N}\left[\left\|\Qold-\pmb{Q}^{n-1}\right\|_{H^1}+\left\|\Qnew-\Qold\right\|_{H^1}\right]\Delta t+\|\pmb{Q}^*-\Qbacksolsubseq\|_{L^2([0,T]; H^1)}\\
        &\leq \|\pmb{Q}-\Qsolsubseq\|_{L^2([0,T]; H^1)}+C\left(\sum_{n=0}^N\|\nabla \Qnew-\nabla \Qold\|^2\Delta t\right)^{\frac{1}{2}}
         +\|\pmb{Q}^*-\Qbacksolsubseq\|_{L^2([0,T]; H^1)}.
    \end{aligned}
 \end{equation}
As $\Delta t\to0$, the convergence results \eqref{eq:Qsol_convergence} and \eqref{eq:Qbacksol_convergence} imply that the first and third will go to $0$ as $\Delta t$ tends to $0$ while the second is $O(\sqrt{\Delta t})$ by energy estimate \eqref{eq:basic_semi-discrete_energy_law}, and so it goes to 0, too. So we conclude that $\pmb{Q}$ is equal to $\pmb{Q}^*$.

For the velocity field, though the sequence $\{\uappr\}_{m}$ does not preserve the divergence-free property on each step, we will show that the limit of its subsequence $\uapprsubseq$ agrees with $\pmb{u}$. To see this, we infer from definitions \eqref{eq:usol} and \eqref{eq:uappr} that
\begin{equation}
    \label{eq:u-u*}
    \begin{aligned}
   \|\pmb{u}-\uapprsubseq\|_{L^2([0,T)\times \dom)}
   &\leq \|\pmb{u}-\usolsubseq\|_{L^2([0,T)\times\Omega)}+\|\usolsubseq-\uapprsubseq\|_{L^2([0,T)\times\Omega)}\\
    &\leq \|\pmb{u}-\usolsubseq\|_{L^2([0,T)\times\Omega)}+C\left(\sum_{n=0}^{N}\|\uint-\unew\|^2\Delta t\right)^{\frac{1}{2}}.
    \end{aligned}
\end{equation}
As $\Delta t\to0$, the convergence result \eqref{eq:u_sol_convergence} implies that the first term will go to $0$ while the second is $O(\sqrt{\Delta t})$ by energy estimate \eqref{eq:basic_semi-discrete_energy_law}, and so it goes to $0$ as well. In this way, we have shown that $\uapprsubseq\to \pmb{u}$ strongly in $L^2\left([0,T)\times \Omega\right)$. For each $\Delta t$, we note that $\usol$ is divergence-free and therefore, by the weak convergence in \eqref{eq:u_sol_convergence}, we obtain that for almost every $t\in[0,T]$ and any smooth function $\phi\in C_c^\infty(\Omega)$, 
\begin{equation}
    \label{eq:usol_divergence_free}
    \begin{aligned}
   0= \lim_{m\to\infty}\int_\dom \phi(x)\Div \usolsubseq(t,x)\,dx&=-\lim_{m\to\infty}\int \nabla\phi(x)\cdot\usolsubseq(t,x)\,dx\\
   &=\int_\dom\nabla\phi(x)\cdot \pmb{u}(t,x)\,dx=\int_\dom\phi(x)\Div\pmb{u}(t,x)\,dx.
   \end{aligned}
\end{equation}
This implies that $\pmb{u}$ is weakly divergence-free which implies that it is divergence free almost everywhere in $[0,T)\times \dom$.

\pmb{Step 4: Weak convergence to $\pmb{H}$, $\pmb{S}$, $\pmb{\Sigma}$.} We let $\pmb{H}=L\Delta\pmb{Q}-rP(\pmb{Q})$. This is well-defined thanks to the regularity estimates we obtained for $\pmb{Q}$ in the previous steps. To obtain a representation of $\Hsol$ in terms of $r^n$ and $\pmb{P}^n$, we introduce the following piece-wise linear function $\widetilde{r\pmb{P}}_{\Delta t}$ to approximate $r\pmb{P}$, 
\begin{equation}
    \label{eq:rP_star}
   \widetilde{r\pmb{P}}_{\Delta t}=\sum_{n=0}^{N-1}  \left[\frac{(n+1)\Delta t-t}{\Delta t}\,r^n\pmb{P}^{n-1}+\frac{t-n\Delta t}{\Delta t}\,r^{n+1}\Pold \right]\,\chi_{S_n}.
\end{equation}
Recalling definitions \eqref{eq:Hsol} and \eqref{eq:Hn+1}, the interpolation $\Hsol$ satisfies the following weak form 
\begin{equation*}
 \label{eq:H_sol_weak_form}
   \left\langle \Hsol, \pmb{\phi}\right\rangle = -L\left\langle \nabla \Qsol,\nabla\pmb{\phi}\right\rangle-\left\langle \widetilde{r\pmb{P}}_{\Delta t}, \pmb{\phi} \right\rangle
\end{equation*}
for any smooth matrix-valued test function $\testphi$ with compact support in $[0,T)\times \Omega$. Accordingly, the subsequence $\Hsolsubseq$ satisfies
\begin{equation*}
    \label{eq:H_solsubseq_weak_form}
     \left\langle \Hsolsubseq, \pmb{\phi}\right\rangle = -L\left\langle \nabla \Qsolsubseq,\nabla\pmb{\phi}\right\rangle-\left\langle \widetilde{r\pmb{P}}_{\Delta t_m}, \pmb{\phi} \right\rangle.
\end{equation*}
To show that $\widetilde{r\pmb{P}}_{\Delta t_m}$ converges weakly to $rP(\pmb{Q})$, we introduce a piece-wise constant interpolation $\pmb{P}^*_{\Delta t}$ to approximate $\pmb{P}$ as
\begin{equation}
    \label{eq:P_star}
    \pmb{P}^*_{\Delta t}=\sum_{n=0}^{N-1}  \pmb{P}^{n-1}\,\chi_{S_n}.
\end{equation}
Then for any smooth test function $\testphi$, we have
\begin{equation*}
    \label{eq:rp_approximation}
    \begin{aligned}
&\int_0^T\int_\Omega\left(\widetilde{r\pmb{P}}_{\Delta t_m}-rP(\pmb{Q})\right):\testphi\,dxdt\\
        &= \underbrace{\int_0^T\int_\Omega \left(\widetilde{r\pmb{P}}_{\Delta t_m}-\rsolsubseq \pmb{P}^*_{\Delta t_m}\right):\testphi\,dxdt}_{K_1}+ \underbrace{\int_0^T\int_\Omega \rsolsubseq \left(\pmb{P}^*_{\Delta t_m}- \Psolsubseq\right):\testphi\,dxdt}_{K_2}\\
        &\quad\,\,+ \underbrace{\int_0^T\int_\Omega \rsolsubseq \left( \Psolsubseq-P(\pmb{Q}) \right):\testphi\,dxdt}_{K_3}+\underbrace{\int_0^T\int_\Omega (\rsolsubseq-r)P(\pmb{Q}):\testphi\,dxdt}_{K_4} 
    \end{aligned}
\end{equation*}
Our goal is to show that when $m\to\infty$, each $K_i, i=1, 2, 3, 4$ tends to $0$. With \eqref{eq:rsol}, \eqref{eq:rP_star}, \eqref{eq:P_star} and using Lemma \ref{lem:P_lipschitz}, Lemma \ref{lem:basic_discrete_energy_law}, we can estimate $K_1$ as
\begin{equation*}
    \label{eq:K_1}
    \begin{aligned}
       \lvert K_1\rvert&=\Bigg\lvert\int_0^T\int_\Omega \sum_{n=0}^{N-1}  \left[\frac{(n+1)\Delta t_m-t}{\Delta t_m}\,r^n\pmb{P}^{n-1}+\frac{t-n\Delta t_m}{\Delta t_m}\,r^{n+1}\Pold \right]\,\chi_{S_n}:\testphi\,dxdt\\
        &\quad\,\,-\int_0^T\int_\Omega \sum_{n=0}^{N-1}  \left[\frac{(n+1)\Delta t_m-t}{\Delta t_m}\,r^n\pmb{P}^{n-1}+\frac{t-n\Delta t_m}{\Delta t_m}\,r^{n+1}\pmb{P}^{n-1} \right]\,\chi_{S_n}:\testphi\,dxdt\Bigg\rvert\\
        &=\left\lvert\int_0^T\int_\Omega\sum_{n=0}^{N-1} \frac{t-n\Delta t_m}{\Delta t_m}r^{n+1}\left(\Pold-\pmb{P}^{n-1}\right)\chi_{S_n}:\testphi\,dxdt\right\rvert\\
        &\leq \Delta t_m\|\testphi\|_{L^\infty([0,T)\times\Omega)}\sum_{n=0}^{N-1}\int_\Omega |r^{n+1}|\left|\Pold-\pmb{P}^{n-1}\right| dx\\
        &\leq \Tilde{L}\Delta t_m\|\testphi\|_{L^\infty([0,T)\times\Omega)}\sum_{n=0}^{N-1}\int_\Omega |r^{n+1}|\left|\Qold-\pmb{Q}^{n-1}\right|dx\\
        &\leq \Tilde{L}\Delta t_m\left(\max_{0\leq n\leq N-1}\|r^{n+1}\|\right)\|\testphi\|_{L^\infty([0,T)\times\Omega)}\sum_{n=0}^{N-1}\|\Qold-\pmb{Q}^{n-1}\|\\
        &\leq  \Tilde{L}T^{\frac{1}{2}}\Delta t_m^{\frac{1}{2}}\left(\max_{0\leq n\leq N-1}\|r^{n+1}\|\right)\|\testphi\|_{L^\infty([0,T)\times\Omega)}\left(\sum_{n=0}^{N-1}\|\Qold-\pmb{Q}^{n-1}\|^2\right)^{\frac{1}{2}}\\
        &\leq C\Delta t_m^{\frac{1}{2}}\left(\max_{0\leq n\leq N-1}\|r^{n+1}\|\right)\|\testphi\|_{L^\infty([0,T)\times\Omega)}\left(\sum_{n=0}^{N-1}\|\nabla\Qold-\nabla\pmb{Q}^{n-1}\|^2\right)^{\frac{1}{2}}\to0.
    \end{aligned}
\end{equation*}
The estimate for $K_2$ is similar. Specifically, we have
\begin{equation*}
    \label{eq:K_2}
    \begin{aligned}
        \lvert K_2\rvert &=\Bigg\lvert \int_0^T\int_\Omega \rsolsubseq\left\{ \sum_{n=0}^{N-1}  \pmb{P}^{n-1}\,\chi_{S_n}-\sum_{n=0}^{N-1}  \left[\frac{(n+1)\Delta t_m-t}{\Delta t_m}\,\pmb{P}^{n-1}+\frac{t-n\Delta t_m}{\Delta t_m}\,\Pold\right]\,\chi_{S_n} \right\}:\testphi\,dxdt\Bigg\rvert\\
        &=\Bigg\lvert \int_0^T\int_\Omega \rsolsubseq \sum_{n=0}^{N-1}\left[  \frac{t-n\Delta t_m}{\Delta t_m}\,\left(\pmb{P}^{n-1}-\Pold\right)\chi_{S_n}\right]:\testphi\,dxdt\Bigg\rvert\\
        &\leq C\Delta t_m\|\rsolsubseq\|_{L^\infty([0,T);L^2(\Omega))}\|\testphi\|_{L^\infty([0,T)\times\Omega)}\sum_{n=0}^{N-1}\|\Qold-\pmb{Q}^{n-1}\|\\
        &\leq C\Delta t_m^{\frac{1}{2}}\|\rsolsubseq\|_{L^\infty([0,T);L^2(\Omega))}\|\testphi\|_{L^\infty([0,T)\times\Omega)}\left(\sum_{n=0}^{N-1}\|\nabla\Qold-\nabla\pmb{Q}^{n-1}\|^2\right)^{\frac{1}{2}}\to0.
    \end{aligned}
\end{equation*}
$K_3$ tends to $0$ as well thanks to the strong convergence of $\Psolsubseq$ to $P(\pmb{Q})$, see~\eqref{eq:PQsol_convergence},~\eqref{eq:Q-Q*}.  The last term $K_4$ goes to $0$ as $m$ tends to infinity by the weak convergence of $\rsolsubseq$ towards $r$. Thus, $\widetilde{r\pmb{P}}_{\Delta t_m}\rightharpoonup rP(\pmb{Q})$.

Using this, we prove $\Hsolsubseq\rightharpoonup\pmb{H}$. Indeed, we have
\begin{equation}
    \label{eq:H_convergence}
    \begin{aligned}
        &\int_0^T\int_\dom \Hsolsubseq:\testphi\,dxdt-\int_0^T\int_\dom \pmb{H}:\testphi\,dxdt\\
       &=-\int_0^T\int_\dom L(\nabla\Qsolsubseq-\nabla \pmb{Q}):\nabla\testphi-\int_0^T\int_\Omega \left(\widetilde{r\pmb{P}}_{\Delta t_m}-rP(\pmb{Q})\right):\testphi dxdt
        \to0,
    \end{aligned}
\end{equation}
as $m$ tends to infinity since $\Grad\Qsolsubseq\to \pmb{Q}$ in $L^2$. This shows that $\Hsolsubseq\rightharpoonup\pmb{H}$. 

Acording to \eqref{eq:SnSigman}, we can define
\begin{equation*}
    \label{eq:Ssol}
    \Ssol= \Wsol\,\Qbacksol-\Qbacksol\,\Wsol+\xi(\Qbacksol\,\Dsol+\Dsol\,\Qbacksol)+\frac{2\xi}{d}\Dsol-2\xi(\Dsol:\Qbacksol)\,(\Qbacksol+\frac{1}{d}\pmb{I}),
\end{equation*}
\begin{equation*}
    \label{eq:Sigmasol}
    \Sigmasol=\Qbacksol\,\Hsol-\Hsol\,\Qbacksol-\xi(\Hsol\,\Qbacksol+\Qbacksol\,\Hsol)-\frac{2\xi}{d}\Hsol+2\xi(\Qbacksol:\Hsol)\left(\Qbacksol+\frac{1}{d}\pmb{I}\right),
\end{equation*}
where 
\begin{equation*}
    \label{eq:DsolWsol}
    \pmb{D}_{\Delta t}^* = \frac{1}{2}(\nabla \uappr+\nabla\pmb{u}_{\Delta t}^{*\,\intercal}),\quad\quad   \pmb{W}_{\Delta  t}^*=\frac{1}{2}(\nabla\uappr-\nabla\pmb{u}_{\Delta t}^{*\,\intercal}).
\end{equation*}
Taking $\pmb{S}=S(\nabla\pmb{u},\pmb{Q})$, $\pmb{\Sigma}=\Sigma(\pmb{Q}, \pmb{H})$ and $\pmb{D}=\frac{1}{2}(\nabla\pmb{u}+\nabla\pmb{u}^\intercal)$, $\pmb{W}=\frac{1}{2}(\nabla\pmb{u}-\nabla\pmb{u}^\intercal)$, we claim that $\Ssolsubseq\rightharpoonup\pmb{S}$ and $\Sigmasolsubseq\rightharpoonup\pmb{\Sigma}$. Using formula \eqref{eq:numerical_s}, we can rewrite $\Ssolsubseq$ as 
\begin{equation*}
    \label{eq:s_subseq}
    \Ssolsubseq=S(\uapprsubseq, \Qbacksolsubseq)-\frac{2\xi}{d^2}\,(\Div \uapprsubseq)\pmb{I}.
\end{equation*}
 As it is shown in \eqref{eq:u_sol_convergence}, \eqref{eq:u-u*} and \eqref{eq:usol_divergence_free}, $\Div \uapprsubseq\rightharpoonup \Div \pmb{u}=0$, and so we only need to show  $S(\uapprsubseq, \Qbacksolsubseq)\rightharpoonup \pmb{S}$. The most challenging term to treat within $S(\uapprsubseq, \Qbacksolsubseq)$ is $(\Dsolsubseq:\Qbacksolsubseq)\,\Qbacksolsubseq$. The weak convergence of other terms follows in a similar way. Applying the generalized H\"{o}lder's inequality and Sobolev inequality, for any smooth function $\testvar$ with compact support in $[0,T)\times\Omega$, we obtain,
\begin{equation}
    \label{eq:S_convergence}
    \begin{aligned}
        &\left|\int_0^T\int_\dom (\Dsolsubseq:\Qbacksolsubseq)\,\Qbacksolsubseq:\testvar\,dxdt-\int_0^T\int_\dom (\pmb{D}:\pmb{Q})\,\pmb{Q}:\testvar\,dxdt\right|\\
        &=\Bigg|\int_0^T\int_\dom (\Dsolsubseq:\Qbacksolsubseq)\,(\Qbacksolsubseq-\pmb{Q}):\testvar\,dxdt+\int_0^T\int_\dom \left(\Dsolsubseq:(\Qbacksolsubseq-\pmb{Q})\right)\,\pmb{Q}:\testvar\,dxdt\\
        &\hphantom{=\Bigg|\int_0^T\int_\dom (\Dsolsubseq:\Qbacksolsubseq)\,(\Qbacksolsubseq-\pmb{Q}):\testvar\,dxdt+}
        +\int_0^T\int_\dom \left((\Dsolsubseq-\pmb{D}):\pmb{Q}\right)\,\pmb{Q}:\testvar\,dxdt\Bigg|\\
        &\leq C\|\testvar\|_{L^\infty([0,T)\times\dom)}\|\Dsolsubseq\|_{L^2([0,T)\times\dom)}\|\Qbacksolsubseq\|_{L^\infty(0,T; L^4)}\|\Qbacksolsubseq-\pmb{Q}\|_{L^2([0,T; L^4)}\\
        &\qquad+ C\|\testvar\|_{L^\infty([0,T)\times\dom)}\|\Dsolsubseq\|_{L^2([0,T)\times\dom)}\|\pmb{Q}\|_{L^\infty([0,T); L^4)}\|\Qbacksolsubseq-\pmb{Q}\|_{L^2([0,T); L^4)}\\
        &\qquad+ \left|\int_0^T\int_\dom\left((\Dsolsubseq-\pmb{D}):\pmb{Q}\right)\,\pmb{Q}:\testvar\,dxdt\right|\\
        &\leq C\left( \|\Qbacksolsubseq\|_{L^\infty(0,T; H^1)}+ \|\pmb{Q}\|_{L^\infty(0,T; H^1)}\right)\|\Qbacksolsubseq-\pmb{Q}\|_{L^2(0,T; H^1)}\\
        &\qquad+ \left|\int_0^T\int_\dom\left((\Dsolsubseq-\pmb{D}):\pmb{Q}\right)\,\pmb{Q}:\testvar\,dxdt\right|
    \end{aligned}
\end{equation}
As $m$ tends to infinity, the first term goes to $0$ since $\Qbacksolsubseq\to \pmb{Q}$ in $L^2(0, T; H^1)$. While the second term tends to $0$ because $\pmb{D}_{\Delta t_m}^*\rightharpoonup\pmb{D}$ in $L^2$,  and $(\pmb{Q}:\testvar)\,\pmb{Q}\in L^2([0, T)\times\dom)$.

To show $\Sigmasolsubseq\rightharpoonup\Sigma$ is similar, therefore, we will only present the treatment of the most challenging term within $\Sigmasolsubseq$, which is $(\Qbacksolsubseq:\Hsolsubseq)\,\Qbacksolsubseq$. For any smooth function $\testvar$ with compact support in $[0,T)\times\Omega$, we have
\begin{equation}
    \label{eq:Sigma_convergence}
    \begin{aligned}
        &\left|\int_0^T\int_\dom (\Qbacksolsubseq:\Hsolsubseq)\,\Qbacksolsubseq:\testvar\,dxdt-\int_0^T\int_\dom (\pmb{Q}:\pmb{H})\pmb{Q}:\testvar\,dxdt\right|\\
        &=\Bigg|\int_0^T\int_\dom (\Qbacksolsubseq:\Hsolsubseq)\,(\Qbacksolsubseq-\pmb{Q}):\testvar\,dxdt+\int_0^T\int_\dom \left((\Qbacksolsubseq-\pmb{Q}):\Hsolsubseq\right)\,\pmb{Q}:\testvar\,dxdt\\
        &\quad\quad\quad\quad\quad\quad\quad\quad\quad\quad\quad\quad\quad\quad\quad\quad\quad\quad\quad\quad+\int_0^T\int_\dom \left(\pmb{Q}:(\Hsolsubseq-\pmb{H})\right)\,\pmb{Q}:\testvar\,dxdt\Bigg|\\
        &\leq C\|\testvar\|_{L^\infty([0,T)\times\dom)}\left(\|\Qbacksolsubseq\|_{L^\infty(0,T; L^4)}+\|\pmb{Q}\|_{L^\infty(0,T; L^4)}\right)\|\Hsolsubseq\|_{L^2([0,T)\times\dom)}\|\Qbacksolsubseq-\pmb{Q}\|_{L^2(0,T; L^4)}\\
        &\qquad+\left|\int_0^T\int_\dom \left(\pmb{Q}:(\Hsolsubseq-\pmb{H})\right)\,\pmb{Q}:\testvar\,dxdt\right|\\
        &\leq C\|\Qbacksolsubseq-\pmb{Q}\|_{L^2(0,T; H^1)}+\left|\int_0^T\int_\dom \left(\pmb{Q}:(\Hsolsubseq-\pmb{H})\right)\,\pmb{Q}:\testvar\,dxdt\right|
    \end{aligned}
\end{equation}
        As $m$ tends to infinity, the first term goes to $0$ since $\Qbacksolsubseq\to \pmb{Q}$ in $L^2(0, T; H^1)$. The second term tends to $0$ because $\Hsolsubseq\rightharpoonup\pmb{H}$ in $L^2$, and $(\pmb{Q}:\testvar)\pmb{Q}\in L^2([0,T)\times\Omega)$.

\pmb{Step 5: Passing the limit.} Using the results from the previous steps, we can pass to the limit in most terms in weak formulation \eqref{eq:Dtu_step1}, and \eqref{eq:DtQ_step1} after integrating over $[0,T)$. The only two remaining terms remaining are $\int_0^T\int_\dom \Hsolsubseq\nabla\Qbacksolsubseq\cdot\pmb{\psi}\,dxdt$ and $\int_0^T\int_\dom (\uapprsubseq\cdot\nabla\Qbacksolsubseq):\testvar\,dxdt$. Combining weak and strong convergence as in Step 4, it follows that
\begin{equation}
    \label{eq:two_stretch_terms_convergence}
   \begin{split}
    &\int_0^T\int_\dom \Hsolsubseq\nabla\Qbacksolsubseq\cdot\pmb{\psi}\,dxdt\rightharpoonup \int_0^T\int_\dom \pmb{H}\nabla\pmb{Q}\cdot\pmb{\psi}\,dxdt,\\
    &\int_0^T\int_\dom (\uapprsubseq\cdot\nabla\Qbacksolsubseq):\testvar\,dxdt\rightharpoonup \int_0^T\int_\dom(\pmb{u}\cdot\nabla\pmb{Q}):\testvar\,dxdt.
   \end{split}
\end{equation}
This shows that $(\pmb{u}, \pmb{Q},\pmb{H}, r)$ is a weak solution satisfying Definition~\ref{def:weakreformulation}.
\end{proof}

In order to conclude, we need to show that the reformulated system~\eqref{eq:ut}-\eqref{eq:H} and the original hydrodynamics system~\eqref{eq:original_system_ut}-\eqref{eq:original_system_Qt} are equivalent in the weak sense. This follows from the following lemma that was proved in~\cite[Lemma 5.2]{GWY2020}:

\begin{lemma}
    \label{cor:equivalence_r}
    Assume that $(\pmb{u}, \pmb{Q} ,\pmb{H}, r)$ is a weak solution in the sense of Definition \eqref{def:weakreformulation}. Then for any smooth function $\phi$ with compact support in $(0,T)\times\Omega$ (compactly supported in both time and space), we have
    \begin{equation}
        \label{eq:r_equivalence}
        \int_0^T\int_\dom r\phi\,dxdt=\int_0^T\int_\dom r(\pmb{Q})\phi\,dxdt
    \end{equation}
   where $r(\pmb{Q})$ is defined in \eqref{eq:auxiliary_variable_r}.
\end{lemma}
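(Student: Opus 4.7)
The strategy is to exhibit $r$ and $r(\pmb{Q})$ as satisfying the \emph{same} distributional evolution equation with the \emph{same} initial data, and then invoke a uniqueness argument. The key observation is that, because the constant $A_0>0$ makes $r(\pmb{Q})$ strictly positive, the map $\pmb{Q}\mapsto r(\pmb{Q})$ is smooth with gradient $P(\pmb{Q})$, so formally $\partial_t r(\pmb{Q})=P(\pmb{Q}):\pmb{Q}_t$, which is exactly \eqref{eq:rt}. Thus both candidates solve the same transport-type equation driven by $\pmb{Q}$.

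The plan is the following. First, I would establish the chain rule at the weak level: for the weak solution of Definition \ref{def:weakreformulation}, the function $r(\pmb{Q})$ satisfies the analog of \eqref{eq:re_weakr}, namely
\begin{equation*}
\int_0^T\!\!\int_\Omega r(\pmb{Q})\,\phi_t\,dxdt - \int_\Omega r(\pmb{Q}(T,x))\phi(T,x)\,dx + \int_\Omega r(\pmb{Q}_0(x))\phi(0,x)\,dx = -\int_0^T\!\!\int_\Omega P(\pmb{Q}):\pmb{Q}_t\,\phi\,dxdt
\end{equation*}
for any smooth $\phi$ with compact support in $[0,T]\times\Omega$. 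To justify this rigorously despite the limited regularity ($\pmb{Q}\in L^\infty(H^1)\cap L^2(H^2)$ and $\partial_t\pmb{Q}\in L^2(L^{6/5})$ only), I would introduce a standard temporal mollification $\pmb{Q}^\varepsilon=\pmb{Q}*\eta_\varepsilon$, for which the classical chain rule gives $\partial_t r(\pmb{Q}^\varepsilon)=P(\pmb{Q}^\varepsilon):\partial_t \pmb{Q}^\varepsilon$ pointwise. I would then integrate against $\phi$ and pass to the limit $\varepsilon\to 0$, using: strong convergence $\pmb{Q}^\varepsilon\to \pmb{Q}$ in $L^2(0,T;H^1)$ combined with the Lipschitz property of $r$ to handle the left-hand side; and the Lipschitz continuity of $P$ (Lemma~\ref{lem:P_lipschitz}) together with $L^\infty$-bound on $P(\pmb{Q}^\varepsilon)$ (from the embedding $H^1\hookrightarrow L^6$ and uniform bounds on $\pmb{Q}^\varepsilon$) and $\partial_t\pmb{Q}^\varepsilon\to\partial_t\pmb{Q}$ in $L^2(L^{6/5})$ to handle the right-hand side. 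Boundary-in-time terms pass via the weak continuity $r(\pmb{Q})\in C_w([0,T];L^2)$ inherited from the regularity of $\pmb{Q}$.

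Next, I would subtract the above identity from \eqref{eq:re_weakr}. Since the initial condition in Definition~\ref{def:weakreformulation} forces $r(0,x)=r(\pmb{Q}_0(x))$, the two initial-data terms cancel, and the right-hand sides are identical. Restricting to $\phi$ with compact support in $(0,T)\times\Omega$ eliminates the terminal boundary term, yielding
\begin{equation*}
\int_0^T\!\!\int_\Omega \big(r-r(\pmb{Q})\big)\phi_t\,dxdt = 0
\end{equation*}
for every such $\phi$. Hence $\partial_t\bigl(r-r(\pmb{Q})\bigr)=0$ in $\mathcal{D}'((0,T)\times\Omega)$, so $r-r(\pmb{Q})$ is independent of $t$ as a distribution on $(0,T)\times\Omega$. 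Combined with the matching initial value $r(\pmb{Q}_0)$ and the fact that both $r$ and $r(\pmb{Q})$ lie in $L^\infty(0,T;L^2)$, this forces $r=r(\pmb{Q})$ almost everywhere, from which \eqref{eq:r_equivalence} follows immediately.

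The main obstacle is the rigorous justification of the chain rule in Step~1. In particular, showing $P(\pmb{Q}^\varepsilon):\partial_t \pmb{Q}^\varepsilon\to P(\pmb{Q}):\pmb{Q}_t$ in a distributional pairing against $\phi$ must reconcile the weak convergence of $\partial_t\pmb{Q}^\varepsilon$ with only Lipschitz (not smooth) control of $P$. The Lipschitz bound in Lemma~\ref{lem:P_lipschitz} together with the strong $L^2(H^1)$-convergence $\pmb{Q}^\varepsilon\to\pmb{Q}$ and the uniform $L^\infty$-bound on $P(\pmb{Q}^\varepsilon)$ (which in turn requires Agmon's inequality, Lemma~\ref{lem:agmon}, applied via the $L^2(H^2)$-regularity of $\pmb{Q}$) should suffice, but some care is needed in bookkeeping the integrability exponents.
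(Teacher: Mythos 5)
Your strategy---showing $r$ and $r(\pmb{Q})$ satisfy the same transport equation driven by $\pmb{Q}_t$ with matching initial data, then concluding equality---is the right idea and is in spirit what the paper does. The paper itself delegates to \cite[Lemma~5.2]{GWY2020}, where the regularity $\partial_t r\in L^2(0,T;L^1)$ (inherited from Corollary~\ref{cor:Dtr_estimate}) is used to make $r$ and $r(\pmb{Q})$ absolutely continuous in time; your temporal-mollification route is a reasonable self-contained alternative. But there are two concrete slips in your justification of the chain rule.

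First, you invoke "the Lipschitz property of $r$'' to pass $r(\pmb{Q}^\varepsilon)\to r(\pmb{Q})$. This is false: the gradient of $r$ is $P$, which is Lipschitz with $P(\pmb{0})=\pmb{0}$ and hence has \emph{linear} growth, so $r$ itself has \emph{quadratic} growth. What you actually need is the quadratic estimate $|r(\pmb{A})-r(\pmb{B})|\leq \tilde{L}(|\pmb{A}|+|\pmb{B}|)|\pmb{A}-\pmb{B}|$, which combined with $\pmb{Q},\pmb{Q}^\varepsilon\in L^\infty(0,T;L^4)$ (from $L^\infty H^1$) and $\pmb{Q}^\varepsilon\to\pmb{Q}$ in $L^2(H^1)\hookrightarrow L^2(L^4)$ still does the job. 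Second, your "uniform $L^\infty$-bound on $P(\pmb{Q}^\varepsilon)$ via Agmon'' is not available: Agmon together with $\pmb{Q}\in L^\infty(H^1)\cap L^2(H^2)$ yields only $\pmb{Q}\in L^4(0,T;L^\infty)$, not an $L^\infty$-in-time bound. In fact Agmon is unnecessary here: since $P$ has linear growth and $\pmb{Q}\in L^\infty(H^1)\hookrightarrow L^\infty(L^6)$, one has $P(\pmb{Q}^\varepsilon)$ bounded uniformly in $L^\infty(0,T;L^6)$, and $P(\pmb{Q}^\varepsilon)-P(\pmb{Q})\to 0$ in $L^2(L^6)$ by Lipschitz continuity. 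Pairing these with $\partial_t\pmb{Q}^\varepsilon\to\pmb{Q}_t$ strongly in $L^2(L^{6/5})$ via H\"older in the $L^6$--$L^{6/5}$ duality gives $P(\pmb{Q}^\varepsilon):\partial_t\pmb{Q}^\varepsilon\to P(\pmb{Q}):\pmb{Q}_t$ in $L^1((0,T)\times\Omega)$ directly, which is exactly what the weak formulation needs.

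One further remark on the closing step: from $\int_0^T\int_\Omega (r-r(\pmb{Q}))\phi_t\,dxdt=0$ for $\phi\in C^\infty_c((0,T)\times\Omega)$ you get only that $r-r(\pmb{Q})$ equals some $t$-independent $g\in L^2(\Omega)$. To conclude $g\equiv 0$ you must go back to the full weak identities (where test functions need not vanish at $t=0$), use the cancellation of the initial-data terms since $r(0,\cdot)=r(\pmb{Q}_0)$ by Definition~\ref{def:weakreformulation}, and deduce $\int_\Omega g\,\phi(0,\cdot)\,dx=0$ for arbitrary $\phi(0,\cdot)\in C_c^\infty(\Omega)$. As written, the passage from "constant in $t$'' plus "matching initial value'' to "$g\equiv 0$'' skips this small but nontrivial step.
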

\begin{proof}
    Since we have shown that $\partial_t \rsol\in L^2(0,T;L^1(\dom))$ in Corollary \ref{cor:Dtr_estimate}, the proof follows in the same way as \cite[Lemma 5.2]{GWY2020}.
\end{proof}
\begin{remark}
    \label{rem:density_L2_argument}
    Since the smooth functions are dense in $L^2(\Omega)$, it is also valid to choose $L^2$ functions as test functions in~\eqref{eq:r_equivalence} since $r$ is bounded in $L^2$.
\end{remark}

\bibliographystyle{abbrv}
\bibliography{related_literature}
\end{document}